\setlist[enumerate,1]{leftmargin=*}
\setlist[itemize]{leftmargin=1em}
\theoremstyle{plain}
\newtheorem{theorem}{Theorem}[section]
\newaliascnt{lemma}{theorem}
\newtheorem{lemma}[lemma]{Lemma}
\newaliascnt{proposition}{theorem}
\newtheorem{proposition}[proposition]{Proposition}
\newaliascnt{corollary}{theorem}
\newtheorem{corollary}[corollary]{Corollary}
\theoremstyle{definition}
\newaliascnt{definition}{theorem}
\newtheorem{definition}[definition]{Definition}
\newaliascnt{example}{theorem}
\newtheorem{example}[example]{Example}
\theoremstyle{remark}
\newaliascnt{remark}{theorem}
\newtheorem{remark}[remark]{Remark}
\newaliascnt{question}{theorem}
\newaliascnt{conjecture}{theorem}
\crefname{theorem}{Theorem}{Theorems}       \Crefname{theorem}{Theorem}{Theorems}
\crefname{lemma}{Lemma}{Lemmas}             \Crefname{lemma}{Lemma}{Lemmas}
\crefname{proposition}{Proposition}{Propositions} \Crefname{proposition}{Proposition}{Propositions}
\crefname{corollary}{Corollary}{Corollaries} \Crefname{corollary}{Corollary}{Corollaries}
\crefname{definition}{Definition}{Definitions} \Crefname{definition}{Definition}{Definitions}
\crefname{example}{Example}{Examples}       \Crefname{example}{Example}{Examples}
\crefname{remark}{Remark}{Remarks}          \Crefname{remark}{Remark}{Remarks}
\crefname{question}{Question}{Questions}    \Crefname{question}{Question}{Questions}
\crefname{conjecture}{Conjecture}{Conjectures} \Crefname{conjecture}{Conjecture}{Conjectures}
\crefname{equation}{Equation}{Equations}    \Crefname{equation}{Equation}{Equations}
\crefname{figure}{Figure}{Figures}          \Crefname{figure}{Figure}{Figures}
\crefname{table}{Table}{Tables}             \Crefname{table}{Table}{Tables}
\renewenvironment{proof}{\noindent\textbf{Proof.}\ }{\hfill$\square$\par}
\title{\textbf{ $\mathfrak b_1$-Verma $\mathfrak b_2$-dual Verma supermodules
}}
\author{\textbf{Shunsuke Hirota}}
\date{\textit{\today}}
\begin{document}

\maketitle
We show that if a module M over a basic classical Lie superalgebra of type type I is simultaneously a Verma module with respect to some Borel \(\mathfrak b_1\) and a dual Verma module with respect to  Borel \(\mathfrak b_2\), then M is isomorphic to a Verma module with respect to either distinguished or an anti-distinguished Borel. 
Our method proceeds by analyzing edge contractions of the finite Young lattice that controls the combinatorics of odd reflections. In principle, the same strategy, for the most part, applies to all basic classical Lie superalgebras.

\tableofcontents

\section{Introduction}\label{sec:intro}

Basic classical Lie superalgebras are widely regarded as a natural generalization of semisimple Lie algebras.  Unlike the semisimple Lie algebra case, basic classical Lie superalgebras admit genuinely different (non-conjugate) Borel subalgebras. Their root systems admit genuinely different choices of simple systems; the associated simple reflections come in two types—\emph{even} and \emph{odd}—and together they naturally form a groupoid. Since these are special cases of Weyl groupoids in the sense of Heckeberger and Yamane\cite{heckenberger2008generalization,heckenberger2020hopf,bonfert2024weyl}, and Weyl groupoids play a crucial role in the classification of Nichols algebras of diagonal type \cite{angiono2024root,andruskiewitsch2017finite}, it is natural and compelling to study these Lie superalgebra cases  and their attendant representation theory.

In representation theory  from this viewpoint, a central object is the \(\mathfrak b\)-Verma module with respect to a Borel subalgebra \(\mathfrak b\): for \(\lambda\in\mathfrak h^{*}\),
\(
M^{\mathfrak b}(\lambda):=\operatorname{Ind}_{\mathfrak b}^{\mathfrak g}\,k_{\lambda},
\)
which has simple top \(L^{\mathfrak b}(\lambda)\).
For each Borel subalgebra \(\mathfrak b\), there is a BGG category \(\mathcal O\) that serves as the natural home for \(\mathfrak b\)-Verma modules; it has been extensively studied\cite{coulembier2016primitive,coulembier2017gorenstein,coulembier2017homological,mazorchuk2014parabolic,musson2012lie,serganova2017representations}, in particular for \(\mathfrak{gl}(m|n)\)\cite{brundan2014representations,brundan2017tensor,brundan2019whittaker,chen2023some,cheng2013equivalence,cheng2015brundan,hoyt2019integrable}.
This category \(\mathcal O\) depends only on the even part \(\mathfrak b_{\bar 0}\), so \(\mathcal O\) mixes Verma modules attached to essentially different Borels. In particular, for each \(\mathfrak b\) the category \(\mathcal O\) carries a different highest weight structure.

The aim of this paper is to analyze, in the prototypical case of the general linear Lie superalgebra \(\mathfrak{gl}(m|n)\), the relationship between changes of Borel subalgebras and representation theory. In particular, we answer the basic question of when the contragredient dual of a Verma module is again a Verma module. In the classical (non-super) BGG category \(\mathcal O\), this is well known: it happens exactly when the Verma module is antidominant and  simple. Our generalization to \(\mathfrak{gl}(m|n)\) setting is summarized below.

\begin{theorem}\label{b1b2}
Let $\mathfrak g=\mathfrak{gl}(m|n)$ and \(\lambda\) be an integral weight and let \(\mathfrak b_1\) be a Borel subalgebra.
If \(M^{\mathfrak b_1}(\lambda)\cong M^{\mathfrak b_2}(\mu)^{\vee}\) for some \(\mu\) and some Borel \(\mathfrak b_2\) , then \(M^{\mathfrak b_1}(\lambda)\) is isomorphic to  a Verma module with respect to either distinguished Borel \(()\) or anti-distinguished Borel \((n^m)\), and \(L^{\mathfrak b_1}(\lambda)\) is antidominant.
Conversely, if \(\lambda\) is antidominant, then \(M^{()}(\lambda)\cong  M^{(n^{m})}(\lambda-2\rho_{\bar 1})^{\vee}\).
\end{theorem}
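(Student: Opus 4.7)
The approach is to split the theorem into its forward and converse directions, tackling each via the finite Young lattice of Borel subalgebras of $\mathfrak{gl}(m|n)$---partitions inside an $m\times n$ rectangle, with covering relations given by adding a corner box, each such move being an odd reflection. The distinguished Borel $()$ sits at one extreme of this lattice and the anti-distinguished $(n^m)$ at the other, and the whole argument exploits the interplay between the global information carried by the module $M^{\mathfrak b_1}(\lambda)\cong M^{\mathfrak b_2}(\mu)^{\vee}$ and the local information attached to each edge.

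The converse is the easier direction. Starting from $\lambda$ antidominant with respect to the distinguished Borel, first I would show that $M^{()}(\lambda)$ is simple, using the BGG-type simplicity criterion for Verma modules. I would then apply, inductively along a saturated chain $()\to\cdots\to(n^m)$ in the Young lattice, the odd-reflection transformation rule for simple highest weight modules: $L^{\mathfrak b}(\nu)\cong L^{s_\beta\mathfrak b}(\nu-\beta)$ when $(\nu,\beta)\neq 0$, and $L^{\mathfrak b}(\nu)\cong L^{s_\beta\mathfrak b}(\nu)$ otherwise. Antidominance should guarantee non-vanishing at every intermediate step, so that the cumulative shift is $-\sum_\beta\beta=-2\rho_{\bar 1}$, giving $L^{()}(\lambda)\cong L^{(n^m)}(\lambda-2\rho_{\bar 1})$. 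A direct check that $\lambda-2\rho_{\bar 1}$ is antidominant with respect to $(n^m)$ then shows $M^{(n^m)}(\lambda-2\rho_{\bar 1})$ is simple too, so it equals its own contragredient dual and the identification $M^{()}(\lambda)\cong M^{(n^m)}(\lambda-2\rho_{\bar 1})^{\vee}$ follows.

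The forward direction is the main content. The plan is a descent on the Young lattice: at each corner of the partition associated to $\mathfrak b_1$ (equivalently, at each simple isotropic odd root $\beta$), I would use the dual-Verma hypothesis together with the edge-contraction analysis announced in the abstract to argue that the isomorphism class can be transported to the adjacent Borel $s_\beta\mathfrak b_1$ with a correspondingly shifted highest weight, while preserving both the Verma and the dual-Verma structures. Iterating, the shape of $\mathfrak b_1$ must migrate to one of the only partitions at which the descent can terminate, namely $()$ or $(n^m)$. Once there, the joint Verma/dual-Verma condition forces simplicity, and simplicity of a Verma at an extreme Borel is exactly antidominance of $L^{\mathfrak b_1}(\lambda)$.

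The principal obstacle is the edge-contraction step itself: at each odd reflection, verifying that the joint Verma/dual-Verma structure survives requires a careful case analysis according to typicality. In the typical case $(\lambda,\beta)\neq 0$ one expects the Verma property to transfer cleanly; the atypical case $(\lambda,\beta)=0$ is the delicate one, because the odd reflection is degenerate and the module acquires extra structure that can obstruct Verma-hood on the adjacent Borel. Controlling precisely which edges of the Young lattice admit contraction---and ruling out the possibility that the descent stalls at an interior partition---is where the combinatorial heart of the argument must sit, and is presumably the purpose of the finite-Young-lattice framework emphasized in the paper.
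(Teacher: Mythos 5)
Your converse direction contains a genuine error. You propose to show that $M^{()}(\lambda)$ is simple for antidominant $\lambda$ and then transport this simple module across the odd-reflection lattice with a cumulative shift of $-2\rho_{\bar 1}$. Both steps fail in the atypical case, which the theorem must cover. The paper notes explicitly that an atypical Verma module is never simple; antidominance does not imply typicality, and in fact antidominant integral weights can be maximally atypical (e.g.\ $\lambda=0$ for $\mathfrak{gl}(1|1)$ is antidominant, atypical, and $M^{()}(0)$ is a nonsplit length-two module). Likewise, the odd-reflection rule for simple modules shifts the highest weight by $-\beta$ only when $(\nu,\beta)\ne 0$; at atypical edges there is no shift, so the cumulative shift on $L^{()}(\lambda)$ is generally \emph{not} $-2\rho_{\bar1}$, and $L^{()}(\lambda)\not\cong L^{(n^m)}(\lambda-2\rho_{\bar1})$ in general. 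The paper instead establishes the converse as equivalence $(1)\Leftrightarrow(5)$ of \cref{antidom}, using the Kac functor identity $M^{()}(\lambda)^{\vee}\cong K_{\le 0}\bigl(M_{\bar0}(\lambda-2\rho_{\bar1})^{\vee}\bigr)$ together with self-duality of the antidominant \emph{even} Verma module $M_{\bar0}(\lambda-2\rho_{\bar1})$; this avoids any appeal to simplicity of the super Verma.

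Your forward direction is pointed in the right general direction (the Young lattice controls the isomorphism classes of Vermas with a fixed character) but is a sketch without the decisive mechanism. The paper does not run a ``descent along corners'' with a case analysis at each odd reflection. Instead it translates the joint Verma/dual-Verma hypothesis into a purely graph-theoretic statement on the contracted lattice $L(m,n)_{\lambda+\rho^{\mathfrak b_1}}$: the composite $M^{\mathfrak b_2}(\mu)\twoheadrightarrow L^{\mathfrak b_2}(\mu)\cong\operatorname{soc}M^{\mathfrak b_1}(\lambda)\hookrightarrow M^{\mathfrak b_1}(\lambda)$ has simple image equal to the socle, and via the odd Verma's theorem (\cref{5.3main}, \cref{oddvermacor}) this forces $[\mathfrak b_1]$ and $[\mathfrak b_2]$ to be mutually unique geodesically maximal (\cref{lemmE}), and $L^{\mathfrak b_2}(\mu)$ to be antidominant (\cref{lmmD}). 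The combinatorial climax (\cref{lemmF}) then shows, via a bridge in $L(m,n)_{\lambda}$ isolating $[()]$, that the only pair of mutually unique geodesically maximal vertices is $\{[()],[(n^m)]\}$. The phenomenon you flag as ``ruling out stalling at an interior partition'' is precisely this bridge/geodesic-maximality argument, but your proposal does not supply it; without it the descent could a priori terminate at an interior Borel.
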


Note that an atypical Verma module is always not simple.

Our main technical work toward \cref{b1b2} is a detailed analysis, for \(\mathfrak{gl}(m|n)\), of the finite Young lattice \(L(m,n)\) and of its appropriate edge contractions, as illustrated in the examples below. The finite Young lattice \(L(m,n)\) is a well-studied object in combinatorics in its own right\cite{coggins2024visual,stanley2012topics}.

\begin{example}\label{l32}

Let $\mathfrak g=\mathfrak{gl}(3|2)$ .
        \(L(3,2)\) is the following edge-colored graph.

\begin{center}
    \begin{tikzpicture}[scale=1.5]
     \node (A) at (0, 0) {\(\emptyset\)};
    \node (B) at (1, 0) {\(\begin{ytableau} ~ \end{ytableau}\)};

    \node (C) at (2.5, 0.75) {\(\begin{ytableau} ~ & ~ \\ \end{ytableau}\)};
    \node (D) at (4, 0.75) {\(\begin{ytableau} ~ \\ ~ & ~ \end{ytableau}\)};
    \node (E) at (5.5, 0.75) {\(\begin{ytableau} ~ & ~ \\ ~ & ~ \end{ytableau}\)};
    \node (F) at (7, 0) {\(\begin{ytableau} ~ \\ ~ & ~ \\ ~ & ~  \end{ytableau}\)};
        \node (G) at (9, 0) {\(\begin{ytableau} ~ & ~ \\  ~ & ~  \\ ~ & ~ \end{ytableau}\)};
    \node (H) at (2.5, -0.75) {\(\begin{ytableau} ~ \\ ~ \end{ytableau}\)};
    \node (I) at (4, -0.75) {\(\begin{ytableau} ~ \\ ~ \\ ~ \end{ytableau}\)};
    \node (J) at (5.5, -0.75) {\(\begin{ytableau} ~ \\ ~ \\ ~ & ~ \end{ytableau}\)};
    
\draw (A) -- node[above] {(1,1)} (B);
\draw (B) -- node[above] {(2,1)} (C);
\draw (C) -- node[above] {(1,2)} (D);
\draw (D) -- node[above] {(2,2)} (E);
\draw (E) -- node[above] {(1,3)} (F);
\draw (F) -- node[above] {(2,3)} (G);
\draw (B) -- node[above] {(1,2)} (H);
\draw (D) -- node[above] {(2,1)} (H);
\draw (D) -- node[above] {(1,3)} (J);
\draw (F) -- node[above] {(2,2)} (J);
\draw (H) -- node[above] {(1,3)} (I);
\draw (I) -- node[above] {(2,1)} (J);
\end{tikzpicture}.
\end{center}

Let $\lambda=\varepsilon_1-\delta_2$.
Then the edge contraction is \(
L(3,2)_\lambda \;\cong\; L(3,2)\big/\{(1,1),(1,2),(2,3)\}.
\)

The resulting quotient is:
\begin{center}
\begin{tikzpicture}[xscale=1.35]
  \node (A) at (0,1.875) {$\left\{\, \varnothing,\ \begin{ytableau} ~ \end{ytableau},\ \begin{ytableau} ~ \\ ~ \end{ytableau} \,\right\}$};
  \node (B) at (3.2, 3.2)
  {$\left\{\,  \begin{ytableau}
     
      ~ & ~
    \end{ytableau},\ \ \begin{ytableau}
      ~\\
      ~ & ~
    \end{ytableau} \,\right\}$};
  \node (C) at (5.8, 3.2) {
    \begin{ytableau}
      ~ & ~\\
      ~ & ~
    \end{ytableau}
  };
  \node (D) at (8.4375,1.875)
  {$\left\{\,
    \begin{ytableau}
      ~ \\
      ~ & ~\\
      ~ & ~
    \end{ytableau}
    \,,\ 
    \begin{ytableau}
      ~ & ~\\
      ~ & ~\\
      ~ & ~
    \end{ytableau}
  \,\right\}$};

  \node (E) at (2.8125, 0.9375) {
    \begin{ytableau}
      ~\\
      ~\\
      ~
    \end{ytableau}
  };
  \node (F) at (5.625, 0.9375) {
    \begin{ytableau}
      ~\\
      ~\\
      ~ & ~
    \end{ytableau}
  };

  \draw (A) -- node[above] {(2,1)} (B);
  \draw (B) -- node[above] {(2,2)} (C);
  \draw (A) -- node[above] {(1,3)} (E);
  \draw (B) -- node[above] {(1,3)} (F);
  \draw (C) -- node[above] {(1,3)} (D);
  \draw (E) -- node[above] {(2,1)} (F);
  \draw (D) -- node[above] {(2,2)} (F);
\end{tikzpicture}
\end{center}
\end{example}

In the finite Young lattice \(L(m,n)\), vertices correspond to all Borel subalgebras of \(\mathfrak{gl}(m|n)\) containing the fixed even Borel of \(\mathfrak g_{\bar0}\), and edges correspond to odd reflections between such Borels (coloreded by the reflected isotropic odd simple root.) Furthermore, for a fixed weight \(\lambda\) we consider the edge contraction \(L(m,n)\rightsquigarrow L(m,n)_{\lambda}\) (we call it odd reflection graph of  Verma modules \(\{\,M^{\mathfrak b}(\lambda-\rho^{\mathfrak b})\,\}_{\mathfrak b}\) , here,  \(\rho^{\mathfrak b}\) is \({\mathfrak b}\)-Weyl vector ).
The vertex set of \(L(m,n)_{\lambda}\) can be identified with the isomorphism classes of Verma modules having the same character,
\(\{\,M^{\mathfrak b}(\lambda-\rho^{\mathfrak b})\,\}_{\mathfrak b}\),
and walks in this graph correspond to compositions of  homomorphisms among these modules.
In prior work \cite{hirota2025odd}, these homomorphisms are described; \cref{b1b2} follow by applying that description.
Note that, for the type~I Lie superalgebra \(\mathfrak{osp}(2\mid 2n)\), the corresponding graph is always a line segment, so \cref{b1b2} is immediate.
In \cref{sec:further-odd-graphs}, we reinterpret the class of \emph{totally disconnected} dominant weights (as introduced in Su and Zhang\cite{su2007character} ,see also \cite{chmutov2014weyl}) in terms of changes of Borel.

\subsection{Acknowledgments}
This work was supported by the Japan Society for the Promotion of Science (JSPS) through the Research Fellowship for Young Scientists (DC1), Grant Number JP25KJ1664.

\section{Well known basics}\label{sec:basics}
In this section, we summarize basic facts about $\mathfrak{gl}(m|n)$ and its
representation theory. All material in this section is
 well known.

Let the base field \(k\) be an algebraically closed field of characteristic \(0\).
Let $\mathbf{Vec}$ denote the category of vector spaces, and $\mathbf{sVec}$ the category of supervector spaces.
Let $\Pi:\mathbf{sVec}\to\mathbf{sVec}$ be the \emph{parity–shift functor}.
Let $F:\mathbf{sVec}\to\mathbf{Vec}$ be the monoidal functor that forgets the $\mathbb{Z}/2\mathbb{Z}$–grading.
Note that $F$ is \emph{not} a symmetric monoidal functor.
In what follows, whenever we refer to the dimension of a supervector space,
we mean this total (ordinary) dimension:
\[
\dim V := \dim_k F(V).
\]

From now on, we will denote by \( \mathfrak{g} \) a finite dimensional Lie superalgebra.
We consider the category \( \mathfrak{g}\text{-sMod} \), where morphisms respects \(\mathbb{Z}/2\mathbb{Z}\)-grading. (This is the module category of a monoid object in \( \mathfrak{g}\text{-sMod} \) in the sense of \cite{etingof2015tensor}.)

\subsection{Root systems}\label{subsec:roots}

\label{glmndef}
Let \( V = V_{\overline{0}} \oplus V_{\overline{1}} \) be a \( \mathbb{Z}/2\mathbb{Z} \)-graded vector space, where \( V_{\overline{0}} \) (the even part) is spanned by \( v_1, \dots, v_m \) and \( V_{\overline{1}} \) (the odd part) is spanned by \( v_{m+1}, \dots, v_{m+n} \).

The space \( \operatorname{End}(V) \) is spanned by basis elements \( E_{ij} \), defined by:  \(
E_{ij} \cdot v_k = \delta_{jk} v_i.
\)

The general linear Lie superalgebra \( \mathfrak{gl}(m|n) \) is defined as the Lie superalgebra spanned by all \( E_{ij} \) with \( 1 \leq i, j \leq m+n \), under the supercommutator:
\(
[E_{ij}, E_{kl}] = E_{ij} E_{kl} - (-1)^{|E_{ij}| |E_{kl}|} E_{kl} E_{ij},
\))
where \( |E_{ij}| = \overline{0} \) if \( E_{ij} \) acts within \( V_{\overline{0}} \) or \( V_{\overline{1}} \) (even), and \( |E_{ij}| = \overline{1} \) if it maps between \( V_{\overline{0}} \) and \( V_{\overline{1}} \) (odd).

In the general linear Lie superalgebra \( \mathfrak{g} = \mathfrak{gl}(m|n) \), the even part is given by \(
\mathfrak{g}_{\overline{0}} = \mathfrak{gl}(m) \oplus \mathfrak{gl}(n).
\)

We fix the standard Cartan subalgebra
\(
\mathfrak{h} := \bigoplus_{1 \le i \le m+n} k E_{ii}.
\)
Define linear functionals
$\varepsilon_1, \dots, \varepsilon_{n+m} \in \mathfrak{h}^*$
by requiring that
$\varepsilon_i(E_{jj}) = \delta_{ij}$ for $1 \le i,j \le n+m$.
Define \( \delta_i = \varepsilon_{m+i} \) for \( 1 \leq i \leq n \).
The non-degenerate symmetric bilinear form \( (\, , \,) \) on is defined as follows:
\(
(\varepsilon_i, \varepsilon_j) = 
(-1)^{|v_i|}\delta_{i,j}
\)

The root space \( \mathfrak{g}_\alpha \) associated with 
\( \alpha \in \mathfrak{h}^* \) is defined as
\(
\mathfrak{g}_\alpha := 
\{\, x \in \mathfrak{g} \mid [h, x] = \alpha(h)x, 
\text{ for all } h \in \mathfrak{h} \,\}.
\)
Then we have
\(
\mathfrak{g}_{\varepsilon_i - \varepsilon_j} = k E_{ij}.
\)

The set of roots \( \Delta \) is defined as
\(
\Delta := \{\, \alpha \in \mathfrak{h}^* 
\mid \mathfrak{g}_\alpha \neq 0 \,\} \setminus \{0\}.
\)

We have a root space decomposition of \( \mathfrak{g} \) with respect to \( \mathfrak{h} \):
\[
\mathfrak{g} = \mathfrak{h} 
\oplus \bigoplus_{\alpha \in \Delta} \mathfrak{g}_\alpha, 
\qquad \text{and} \quad \mathfrak{g}_0 = \mathfrak{h}.
\]

we define The sets of all even roots and odd roots as follows:
\(
\Delta_{\overline{0}} = 
\{\, \varepsilon_i - \varepsilon_j, \ \delta_i - \delta_j 
\mid i \neq j \,\},
\)
\(
\Delta_{\overline{1}}  = 
\{\, \varepsilon_i - \delta_j 
\mid 1 \le i \le m, \ 1 \le j \le n \,\}.
\)
Note that for any odd root $\alpha\in\Delta_{\bar1}$ we have $(\alpha,\alpha)=0$; i.e., every odd root is isotropic .

\begin{definition}
 A subset \( \Delta^+ \subset \Delta \) is called a \emph{positive system} if it satisfies:
\begin{enumerate}
    \item \( \Delta = \Delta^+ \cup (-\Delta^+) \) (disjoint union),
    \item For any \( \alpha, \beta \in \Delta^+ \), if \( \alpha + \beta \in \Delta \), then \( \alpha + \beta \in \Delta^+ \).
\end{enumerate}

Given a positive system \( \Delta^+ \), the associated \emph{fundamental system} is defined by
\(
\Pi := \{ \alpha \in \Delta^+ \mid \alpha \text{ cannot be written as } \alpha = \beta + \gamma \text{ with } \beta, \gamma \in \Delta^+ \}.
\)
Elements of \( \Pi \) are called \emph{simple roots}.

Let \( \Delta^+ \subset \Delta \) be a positive system. The corresponding \emph{Borel subalgebra} \( \mathfrak{b} \subset \mathfrak{g} \) is defined as
\(
\mathfrak{b} := \mathfrak{h} \oplus \bigoplus_{\alpha \in \Delta^+} \mathfrak{g}_\alpha,
\)
where \( \mathfrak{g}_\alpha \subset \mathfrak{g} \) is the root space corresponding to \( \alpha \in \Delta \).
Choose natural root vectors $e_\alpha\in\mathfrak g_\alpha$, $e_{-\alpha}\in\mathfrak g_{-\alpha}$ Then for any $\lambda\in\mathfrak h^*$,
\(
\lambda([e_\alpha,e_{-\alpha}])=(\lambda,\alpha).
\)

For a Borel subalgebra \( \mathfrak{b}\), we express the triangular decomposition of \( \mathfrak{g} \) as
\(
\mathfrak{g} = \mathfrak{n}^{\mathfrak{b}-} \oplus \mathfrak{h} \oplus \mathfrak{n}^{\mathfrak{b}+},
\)
where \( \mathfrak{b} = \mathfrak{h} \oplus \mathfrak{n}^{\mathfrak{b}+} \).

The sets of positive roots, even positive roots, and odd positive roots corresponding to \(\mathfrak b\) are denoted by
\(\Delta^{\mathfrak b,+}\),
\(\Delta_{\bar 0}^{\mathfrak b,+}\),
and
\(\Delta_{\bar 1}^{\mathfrak b,+}\),
respectively.
Let \(\Pi^{\mathfrak b}\) denote the set of all simple roots associated with the fixed Borel subalgebra \(\mathfrak b\),
and write
\(\Pi^{\mathfrak b}=\Pi^{\mathfrak b}_{\bar 0}\sqcup\Pi^{\mathfrak b}_{\bar 1}\)
for its decomposition into even and odd simple roots.
\end{definition}

It is clear that the root system of $\mathfrak{gl}(m|n)$, after forgetting the parity of roots,
identifies with that of $\mathfrak{gl}(m+n)$. In particular, it is useful to consider the Weyl
group $S_{m+n}$ of $\mathfrak{gl}(m+n)$. This phenomenon is specific to $\mathfrak{gl}(m|n)$ and
does not occur for other basic classical Lie superalgebras.

All positive root systems of $\mathfrak{gl}(m|n)$ are classified by total orders on the labeled
set $\{\varepsilon_1,\dots,\varepsilon_{m+n}\}$, equivalently by permutations
$\tau\in S_{m+n}$. Given $\tau$, set
\[
\Delta^+(\tau)
=\{\ \varepsilon_i-\varepsilon_j \mid \tau(i)<\tau(j)\ \}
\]
yielding a bijection $\tau\leftrightarrow\Delta^+(\tau)$; hence there are $(m+n)!$ positive systems.

The Weyl group $ S_m\times S_n$ of
$\mathfrak{g}_{\bar0}=\mathfrak{gl}(m)\oplus\mathfrak{gl}(n)$ acts on $\mathfrak h^*$ by
permuting the $\varepsilon_i$’s and the $\delta_j$’s separately. In particular, the even positive
root systems are all conjugate under $ S_m\times S_n$. We therefore fix the standard even Borel
$\mathfrak b^{\mathrm{st}}_{\bar0}$ (block upper triangular in $\mathfrak g_{\bar0}$), with even positive root system
\[
\Delta_{\bar0}^{\mathrm{st},+}
=\{\ \varepsilon_i-\varepsilon_j \mid 1\le i<j\le m\ \}
 \ \cup\
 \{\ \delta_p-\delta_q \mid 1\le p<q\le n\ \}.
\]

With this choice fixed, positive systems whose even part equals
$\Delta_{\bar0}^{\mathrm{st},+}$ are classified by $\varepsilon\delta$–sequences, equivalently by \((m¥n)\)-shuffle \[
\tau\in\mathrm{Sh}(m|n):=\bigl\{\,w\in S_{m+n}\ \big|\ 
w(1)<\cdots<w(m)\ \text{and}\ 
w(m+1)<\cdots<w(m+n)\,\bigr\}.
\]
 Given $\tau$, set
\[
\Delta^+(\tau)
=\{\ \varepsilon_i-\varepsilon_j \mid \tau(i)<\tau(j)\ \}
\]
yielding a bijection $\tau\leftrightarrow\Delta^+(\tau)$; hence there are $\frac{(m+n)!}{m!n!}$ positive systems with standard even one.

\begin{definition}\label{g.ydi}
A partition is a weakly decreasing finite sequence of positive integers; we allow the empty
partition (). For example, we write \(5+3+3+1\) as \((53^{2}1)\). We identify partitions with Young diagrams in French notation (rows
increase downward). A box has coordinates $(i,j)$ with $i$ the column (from the left) and $j$ the row (from
the bottom).
\end{definition}

Each  $\varepsilon\delta$–sequences with $n$ symbols $\varepsilon$ and $m$ symbols $\delta$ determines a lattice path from
$(n,0)$ to $(0,m)$ (left step for $\varepsilon$, up step for $\delta$); the region weakly
southeast of the path inside $(n^m)$ is a Young diagram fitting in the \(m\times n\) rectangle
$(n^m)$, and this gives a
bijection between  $\varepsilon\delta$–sequences and the set of Young diagrams fitting in the \(m\times n\) rectangle
$(n^m)$.

In what follows, we represent a Borel subalgebrawith sandard even Borel subalgebra by a partition.
In particular, we call ()
the \emph{distinguished Borel subalgebra}, and 
$(n^m)$ the \emph{anti-distinguished Borel subalgebra}.

\subsection{Finite Young lattices $L(m,n)$}\label{subsec:finite-young-lattices}

\begin{definition}
We define the edge colored  graph
\(
\mathrm{Cay}(S_{m+n})
\)
as follows:

\begin{itemize}
  \item \textbf{Vertex set:} the set of all positive root systems. (The set of all Borel subalgebras).
  \item \textbf{Color set:} the quotient $\Delta / (\pm 1)$, i.e.\ pairs of opposite roots.
  \item \textbf{Edges:} for each color $\alpha \in \Delta / (\pm 1)$,
        connect two vertices if they are related by the simple reflection 
        corresponding to the root $\alpha$ .
\end{itemize}
\end{definition}

 This is precisely the Caylay graph of  Weyl groupoid of \(\mathfrak{gl}(m|n)\) in the sense of Heckenberger–Yamane. In this framework, the same graph construction extends to regular symmetrizable Kac–Moody Lie superalgebras and to Nichols algebras of diagonal type.

We write
\( r_{\alpha} \mathfrak{b} \) for the Borel obtained from \(\mathfrak{b}\) by the reflection at \(\alpha\).

Note that the definition of $\mathrm{Cay}(S_{m+n})$ is formulated purely in the language of $\mathfrak{gl}{(m+n})$ (via its root data and reflections), independent of the super decomposition of $\mathfrak{gl}(m|n)$. However, recalling parity, clearly there are two kinds of edges.  We call the edge associated to an even root an even reflection, and the edge associated to an odd root an odd reflection. It is therefore natural to consider the following graph.

\begin{definition}\label{RBgdef}
    The edge colored graph \( OR(\mathfrak{g}) \) is defined as follows:

    \begin{itemize}
  \item \textbf{Vertex set:} the set of positive root systems whose even part is the standard positive even root system \(\Delta_{\bar 0}^{\mathrm{st}+}\).
  \item \textbf{Color set:} the quotient \(\Delta_{\bar 1}/(\pm1)\), i.e.\ pairs of opposite odd roots.
  \item \textbf{Edges:} for each color \(\alpha\in\Delta_{\bar 1}/(\pm1)\), connect two vertices if they are related by the odd reflection corresponding to the odd root \(\alpha\).
\end{itemize}

\end{definition}

\begin{definition}
    
[Edge-colored finite Young's Lattice] \label{3.1lmn}
 Define an edge-colored graph \( L(m, n) \) as follows:
\begin{itemize}
    \item \textbf{Vertex set:}  the set of Young diagrams fitting in the \(m\times n\) rectangle
 ;
    \item \textbf{Color set:} \( C := \{1,2,...,n\} \times \{1,2,...,m\}  \);
    \item \textbf{Edges:} there is an edge of color \( (i, j) \) between vertices \( x \) and \( y \) if and only if \( y \) is obtained from \( x \) by adding or removing a box at coordinate \( (i, j) \), while all other boxes remain fixed.
\end{itemize}
\end{definition}

From the above discussion, the following observation is immediate.

\begin{proposition}
 we obtain a natural edge-colored graph isomorphism:
\[
    OR(\mathfrak{gl}(m|n)) \cong L(m, n) 
\]

The color set of \( OR(\mathfrak{gl}(m|n)) \) can naturally be identified with the collection of coordinates of boxes in an \( m \times n \) rectangle By assigning the color \( \varepsilon_i - \delta_j \) to the box at coordinates \( (j, m+1-i) \)

\end{proposition}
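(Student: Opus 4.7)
The plan is to assemble three combinatorial bijections already sketched in the preceding subsection into a single edge-colored isomorphism, and then run one short coordinate computation to pin down the colors. For the vertex map I would invoke the classification stated above: positive systems whose even part is $\Delta_{\bar 0}^{\mathrm{st},+}$ correspond bijectively to shuffles $\tau\in\mathrm{Sh}(m|n)$, hence to $\varepsilon\delta$-sequences, hence (via the lattice-path construction) to Young diagrams fitting in the rectangle $(n^m)$. This identifies the vertex sets of $OR(\mathfrak{gl}(m|n))$ and $L(m,n)$.

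For the edge structure, the key observation is that for a shuffle $\tau$ the odd simple roots of $\Delta^+(\tau)$ are exactly the odd roots $\pm(\varepsilon_i-\delta_j)$ such that $\varepsilon_i$ and $\delta_j$ occupy adjacent positions in the total order determined by $\tau$: a non-adjacent pair always admits a decomposition as a sum of a shorter odd positive root and a positive even root, whereas an adjacent pair does not. The odd reflection at such a root is then precisely the adjacent transposition of the two symbols in the $\varepsilon\delta$-sequence. Under the lattice-path dictionary, an adjacent transposition replaces a single ``up-then-left'' corner by a ``left-then-up'' corner (or vice versa), which toggles exactly one unit square in the rectangle. This is the add/remove-a-box move defining the edges of $L(m,n)$, so edges map to edges.

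Finally, I would identify the toggled box explicitly to confirm the color rule. With the conventions that $\varepsilon$-steps go up and $\delta$-steps go left, starting from $(n,0)$, the $i$-th $\varepsilon$-step and the $j$-th $\delta$-step, when adjacent, flank the unit square with Cartesian corners $(n-j,i-1)$ and $(n-j+1,i)$. Under the standard identification of this region with an honest partition (which amounts to reflecting the geometric rectangle through its center so that rows and columns align with partition coordinates), that square maps to the box at position $(j,m+1-i)$, which is the claimed color for $\varepsilon_i-\delta_j$. The main obstacle is purely bookkeeping: tracking the direction of the path, which side of the path carries the Young diagram, and the reflection relating the geometric rectangle to the partition rectangle. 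I would sanity-check the formula against the two extreme Borels (the unique odd simple roots $\varepsilon_m-\delta_1$ and $\varepsilon_1-\delta_n$ at the distinguished and anti-distinguished Borels should match the boxes $(1,1)$ and $(n,m)$, respectively) and against the edges displayed in \cref{l32}.
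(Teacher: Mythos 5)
Your proposal is correct and is essentially the proof the paper has in mind: the paper simply declares the isomorphism ``immediate'' from the preceding bijections (positive systems with standard even part $\leftrightarrow$ shuffles $\leftrightarrow$ $\varepsilon\delta$-sequences $\leftrightarrow$ Young diagrams), and you supply the missing details, namely that odd simple roots are precisely the adjacent $\varepsilon$-$\delta$ pairs, that an odd reflection swaps such a pair, and that this toggles exactly one box of the lattice path. One small remark: your step convention ($\varepsilon$ up, $\delta$ left) is the opposite of the one stated in the paper, but the paper's stated convention appears to contain a typo (it claims $n$ symbols $\varepsilon$ and $m$ symbols $\delta$), and your choice is the self-consistent one, as your sanity checks at the distinguished and anti-distinguished Borels confirm the color rule $\varepsilon_i-\delta_j\mapsto(j,m+1-i)$.
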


\begin{corollary}
    Two Borel subalgebras $\mathfrak b$ and $\mathfrak b'$ share the same even part
if and only if they are related by a sequence of odd reflections.
In particular, the graph $L(m,n)$ is isomorphic, as edge colored graph,
to any connected component obtained from $\mathrm{Cay}(S_{m+n})$
by deleting all edges corresponding to even roots.
\end{corollary}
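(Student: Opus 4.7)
The plan is to deduce both assertions from the preceding proposition identifying \(OR(\mathfrak{gl}(m|n))\cong L(m,n)\), together with two elementary observations: (a) an odd reflection preserves the even part of a Borel, and (b) the finite Young lattice \(L(m,n)\) is connected.

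For the forward implication of the biconditional, I would show that an odd reflection \(r_\alpha\) at an isotropic odd simple root \(\alpha\in\Pi^{\mathfrak b}_{\bar 1}\) produces the Borel whose positive system is \((\Delta^{+}(\mathfrak b)\setminus\{\alpha\})\cup\{-\alpha\}\); since \(\alpha\in\Delta_{\bar 1}\), only odd positive roots are affected and \(\Delta_{\bar 0}^{+}\) is preserved. Iterating, any Borel reachable from \(\mathfrak b\) through a sequence of odd reflections shares the even part \(\mathfrak b_{\bar 0}\).

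For the reverse implication, suppose \(\mathfrak b\) and \(\mathfrak b'\) share a common even part \(\mathfrak b_{\bar 0}\). Since all even positive systems are \(S_m\times S_n\)-conjugate, I may assume \(\mathfrak b_{\bar 0}=\mathfrak b^{\mathrm{st}}_{\bar 0}\), placing both Borels in the vertex set of \(OR(\mathfrak{gl}(m|n))\). By the preceding proposition this graph is isomorphic to \(L(m,n)\), which is clearly connected: any Young diagram in \((n^m)\) reaches \(\varnothing\) by removing boxes one at a time. Hence there is a walk in \(OR(\mathfrak{gl}(m|n))\) from \(\mathfrak b\) to \(\mathfrak b'\), i.e., a sequence of odd reflections taking one to the other.

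The ``in particular'' clause is then a repackaging: deleting all even edges from \(\mathrm{Cay}(S_{m+n})\) partitions the vertex set into connected components, which by the biconditional above are exactly the fibers of the map sending a Borel to its even part. The \(S_m\times S_n\)-action on Borels transitively permutes these fibers and conjugates odd reflections to odd reflections, with an induced permutation of colors on \(\Delta_{\bar 1}/(\pm1)\), yielding an edge-colored graph isomorphism between any two components; since the component over the standard even Borel is by definition \(OR(\mathfrak{gl}(m|n))\cong L(m,n)\), every component is isomorphic to \(L(m,n)\). I do not anticipate a real obstacle --- the only point requiring care is verifying this \(S_m\times S_n\)-equivariance at the level of edge colors, so that the isomorphism between components is one of \emph{edge-colored} graphs rather than merely of the underlying graphs.
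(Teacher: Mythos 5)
Your proposal is correct and matches the paper's (implicit) reasoning: the paper states the corollary without a separate proof because it follows directly from the identification $OR(\mathfrak{gl}(m|n))\cong L(m,n)$, the evident connectedness of $L(m,n)$, and the fact that $S_m\times S_n$ acts simply transitively on even positive systems while preserving root parity. Your care about checking $S_m\times S_n$-equivariance of the coloring is exactly the right point, since the color set of $L(m,n)$ differs from $\Delta_{\bar1}/(\pm1)$ and the isomorphism between components intertwines the two via the induced permutation of odd roots.
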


 \begin{example}
In $\mathrm{Cay}(S_{2+2})=\mathrm{Cay}(S_4)$, label each vertex by the
subscripted $\varepsilon\delta$–sequence corresponding to its positive root
system, and draw the \emph{even reflections} (i.e., those in $S_2\times S_2$
swapping two $\varepsilon$’s or two $\delta$’s) as dashed edges.
Deleting all even-reflection (dashed) edges yields a disjoint union of
$|S_2\times S_2|=4$ connected components, each (as a colored graph) isomorphic to $L(2,2)$.
Here the sequence \(\varepsilon_1\,\delta_1\,\varepsilon_2\,\delta_2\) encodes the distinguished Borel subalgebra \(()\), whereas \(\delta_1\,\delta_2\,\varepsilon_1\,\varepsilon_2\) encodes the anti-distinguished Borel subalgebra \((2^{2})\).

\begin{minipage}{0.9\textwidth}
  \centering
  \begin{tikzpicture}[xscale=3, yscale=1.4,
    every node/.style={inner sep=0pt, font=\small}]
    \node (A0) at (0,0.47) {$\delta_1\delta_2\varepsilon_2\varepsilon_1$}; 
    \node (B0) at (0.47,0) {$\delta_2\delta_1\varepsilon_2\varepsilon_1$}; 
    \node (C0) at (0,-0.47) {$\delta_2\delta_1\varepsilon_1\varepsilon_2$}; 
    \node (D0) at (-0.47,0) {$\delta_1\delta_2\varepsilon_1\varepsilon_2$}; 
    \draw[dashed] (A0) -- (B0) -- (C0) -- (D0) -- (A0);

    \node (A1) at (0,1.97) {$\varepsilon_2\delta_1\varepsilon_1\delta_2$};
    \node (C1) at (0,1.03) {$\delta_1\varepsilon_2\delta_2\varepsilon_1$};
    \node (B1) at (0.47,1.5) {$\varepsilon_2\delta_1\delta_2\varepsilon_1$};
    \node (D1) at (-0.47,1.5) {$\delta_1\varepsilon_2\varepsilon_1\delta_2$};
    \draw (A1) -- (B1) -- (C1) -- (D1) -- (A1);

    \node (C2) at (0,-1.97) {$\varepsilon_1\delta_2\varepsilon_2\delta_1$};
    \node (A2) at (0,-1.03) {$\delta_2\varepsilon_1\delta_1\varepsilon_2$};
    \node (B2) at (0.47,-1.5) {$\delta_2\varepsilon_1\varepsilon_2\delta_1$};
    \node (D2) at (-0.47,-1.5) {$\varepsilon_1\delta_2\delta_1\varepsilon_2$};
    \draw (A2) -- (B2) -- (C2) -- (D2) -- (A2);

    \node (B3) at (1.97,0) {$\varepsilon_2\delta_2\varepsilon_1\delta_1$};
    \node (D3) at (1.03,0) {$\delta_2\varepsilon_2\delta_1\varepsilon_1$};
    \node (A3) at (1.5,0.47) {$\varepsilon_2\delta_2\delta_1\varepsilon_1$};
    \node (C3) at (1.5,-0.47) {$\delta_2\varepsilon_2\varepsilon_1\delta_1$};
    \draw (A3) -- (B3) -- (C3) -- (D3) -- (A3);

    \node (D4) at (-1.97,0) {$\varepsilon_1\delta_1\varepsilon_2\delta_2$};
    \node (B4) at (-1.03,0) {$\delta_1\varepsilon_1\delta_2\varepsilon_2$};
    \node (A4) at (-1.5,0.47) {$\delta_1\varepsilon_1\varepsilon_2\delta_2$};
    \node (C4) at (-1.5,-0.47) {$\varepsilon_1\delta_1\delta_2\varepsilon_2$};
    \draw (A4) -- (B4) -- (C4) -- (D4) -- (A4);

    \draw (A0) -- (C1);
    \draw (B0) -- (D3);
    \draw (C0) -- (A2);
    \draw (D0) -- (B4);

    \node (NA1) at (0,2.5) {$\varepsilon_2\varepsilon_1\delta_1\delta_2$};
    \node (NB3) at (2.5,0) {$\varepsilon_2\varepsilon_1\delta_2\delta_1$};
    \node (NC2) at (0,-2.5) {$\varepsilon_1\varepsilon_2\delta_2\delta_1$};
    \node (ND4) at (-2.5,0) {$\varepsilon_1\varepsilon_2\delta_1\delta_2$};

    \draw (A1) -- (NA1);
    \draw (C2) -- (NC2);
    \draw (B3) -- (NB3);
    \draw (D4) -- (ND4);

    \draw[dashed] (NA1) -- (NB3) -- (NC2) -- (ND4) -- (NA1);

    \draw[dashed] (B1) -- (A3);
    \draw[dashed] (C3) -- (B2);
    \draw[dashed] (D2) -- (C4);
    \draw[dashed] (A4) -- (D1);
  \end{tikzpicture}
\end{minipage}

\end{example}

\subsection{Contragredient duality}\label{subsec:contragredient-duality}

\begin{definition}

We denote by \( s\mathcal{W}\) the category of locally finite \( \mathfrak{h} \)-semisimple  \( \mathfrak{g} \)-modules (i.e. weight modules with finite dimensional weight spaces).

A weight module \( M \) admits a weight space decomposition :
\[
M = \bigoplus_{\lambda \in \mathfrak{h}^*} M_\lambda, \qquad
M_\lambda := \{ v \in M \mid h \cdot v = \lambda(h)v \ \text{for all } h \in \mathfrak{h} \}.
\]
\end{definition}

\begin{lemma}[See also \cite{brundan2014representations} Lemma 2.2,  \cite{chen2020primitive} Proposition 2.2.3]
We can choose \( p \in \operatorname{Map}(\mathfrak{h}^*, \mathbb{Z}/2\mathbb{Z}) \) such that
\[
\mathcal{W}:= \{ M \in s\mathcal{W}\mid \deg M_{\lambda} = p(\lambda) \text{ for } \lambda \in \mathfrak{h}^*, M_\lambda \neq 0 \}
\]
forms a Serre subcategory, \(  \mathcal{W}\) contains the trivial module and \( s\mathcal{W}= \mathcal{W}\oplus \Pi \mathcal{W}\).

\end{lemma}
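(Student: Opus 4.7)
The plan is to construct $p$ by first producing a natural parity homomorphism on the root lattice and then extending it to all of $\mathfrak h^{*}$ by coset representatives. Let $Q\subset\mathfrak h^{*}$ denote the $\mathbb Z$-span of the roots. Since $Q$ sits inside $\bigoplus_i\mathbb Z\varepsilon_i\oplus\bigoplus_j\mathbb Z\delta_j$, I would define $\bar p\colon Q\to\mathbb Z/2\mathbb Z$ by $\bar p\bigl(\sum a_i\varepsilon_i+\sum b_j\delta_j\bigr):=\sum_j b_j\bmod 2$ and check on generators that $\bar p(\varepsilon_i-\varepsilon_j)=\bar p(\delta_i-\delta_j)=\bar 0$ while $\bar p(\varepsilon_i-\delta_j)=\bar 1$; equivalently, $\bar p(\beta)=|\beta|$ for every root $\beta$. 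Then, using the axiom of choice, pick a system of coset representatives $S\subset\mathfrak h^{*}$ for $\mathfrak h^{*}/Q$ with $0\in S$, and set $p(s+\mu):=\bar p(\mu)$ for $s\in S$ and $\mu\in Q$. By construction $p(0)=\bar 0$ and $p(\lambda+\beta)=p(\lambda)+|\beta|$ for every $\lambda\in\mathfrak h^{*}$ and every root $\beta$.

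Next I would verify that $\mathcal W$ is a Serre subcategory containing the trivial module. For any short exact sequence $0\to M'\to M\to M''\to 0$ in $s\mathcal W$, each weight $\lambda$ yields a short exact sequence $0\to M'_\lambda\to M_\lambda\to M''_\lambda\to 0$ of $\mathbb Z/2\mathbb Z$-graded vector spaces. If $M\in\mathcal W$, then $M_\lambda$ is concentrated in degree $p(\lambda)$ whenever nonzero, which forces the same on $M'_\lambda$ and $M''_\lambda$; conversely, an extension of two super vector spaces concentrated in degree $p(\lambda)$ is itself concentrated in degree $p(\lambda)$, so $\mathcal W$ is closed under extensions. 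The trivial module has only the weight $0$, one-dimensional and of degree $\bar 0=p(0)$, so it lies in $\mathcal W$.

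Finally, for the decomposition $s\mathcal W=\mathcal W\oplus\Pi\mathcal W$, given $M\in s\mathcal W$ I would set $M^{(\varepsilon)}:=\bigoplus_\lambda(M_\lambda)_{p(\lambda)+\varepsilon}$ for $\varepsilon\in\mathbb Z/2\mathbb Z$, so that $M=M^{(0)}\oplus M^{(1)}$ as super vector spaces. The crucial point, on which the whole argument pivots, is $\mathfrak g$-stability of each summand: for a homogeneous root vector $e_\beta\in\mathfrak g_\beta$ and $x\in(M_\lambda)_{p(\lambda)+\varepsilon}$, the image $e_\beta\cdot x$ lies in $M_{\lambda+\beta}$ with parity $|\beta|+p(\lambda)+\varepsilon=p(\lambda+\beta)+\varepsilon$, and hence belongs to $M^{(\varepsilon)}$; stability under $\mathfrak h$ is automatic since $\mathfrak h$ acts diagonally on each weight space. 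Thus $M^{(0)}\in\mathcal W$ and $M^{(1)}\in\Pi\mathcal W$, and the decomposition is functorial in $M$. No step presents a real obstacle beyond this bookkeeping; the whole argument hinges on the single identity $\bar p(\beta)=|\beta|$ built into the definition of $p$.
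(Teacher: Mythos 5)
Your proof is correct; the paper itself gives no proof, only citations to Brundan--Losev--Webster and Chen, and your argument is essentially the standard one found there. One small point you leave implicit: to get a genuine decomposition of categories $s\mathcal W = \mathcal W\oplus\Pi\mathcal W$ (not merely an object-wise splitting), one also needs $\operatorname{Hom}_{s\mathcal W}(N,N')=0$ for $N\in\mathcal W$, $N'\in\Pi\mathcal W$; this is immediate because morphisms in $s\mathcal W$ preserve the $\mathbb Z/2\mathbb Z$-grading, so any such map sends $N_\lambda$, concentrated in parity $p(\lambda)$, into $(N'_\lambda)_{p(\lambda)}=0$. The rest is exactly as you say: the identity $\bar p(\beta)=|\beta|$ on roots is the single nontrivial input, and extending by coset representatives of $\mathfrak h^*/Q$ with $p(0)=\bar 0$ handles both the trivial-module condition and well-definedness.
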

Similarly, for \(\mathfrak g_{\bar 0}\) we introduce the category \(s\mathcal{W}_{\bar 0}\), and define the category \(\mathcal{W}_{\bar 0}\) so as to be compatible with the restriction functor \(\operatorname{Res}^{\mathfrak g}_{\mathfrak g_{\bar 0}}\).

\begin{lemma}[\cite{bell1993theory}, Th.\ 2.2; \cite{coulembier2017gorenstein}, §6.1]\label{indres}
Let \(\operatorname{Res}^{\mathfrak g}_{\mathfrak g_{\bar 0}}:\mathfrak g\text{-sMod}\to\mathfrak g_{\bar 0}\text{-sMod}\),
\(\operatorname{Ind}^{\mathfrak g}_{\mathfrak g_{\bar 0}}:\mathfrak g_{\bar 0}\text{-sMod}\to\mathfrak g\text{-sMod}\), and
\(\operatorname{Coind}^{\mathfrak g}_{\mathfrak g_{\bar 0}}:\mathfrak g_{\bar 0}\text{-sMod}\to\mathfrak g\text{-sMod}\)
denote restriction, induction, and coinduction, respectively; thus
\(\operatorname{Ind}^{\mathfrak g}_{\mathfrak g_{\bar 0}}\dashv \operatorname{Res}^{\mathfrak g}_{\mathfrak g_{\bar 0}}\dashv \operatorname{Coind}^{\mathfrak g}_{\mathfrak g_{\bar 0}}\).

Then:
(1) each of \(\operatorname{Res}^{\mathfrak g}_{\mathfrak g_{\bar0}}\), \(\operatorname{Ind}^{\mathfrak g}_{\mathfrak g_{\bar0}}\), \(\operatorname{Coind}^{\mathfrak g}_{\mathfrak g_{\bar0}}\) is exact and restricts between \(\mathcal W_{\bar0}\) and \(\mathcal W\);
(2) there is a natural isomorphism of functors \(\operatorname{Ind}^{\mathfrak g}_{\mathfrak g_{\bar0}}\cong \operatorname{Coind}^{\mathfrak g}_{\mathfrak g_{\bar0}}\);
(3) The unit \(\operatorname{Id}_{\mathcal O_{\bar0}}\!\to\!\operatorname{Res}^{\mathfrak g}_{\mathfrak g_{\bar0}}\!\circ\!\operatorname{Ind}^{\mathfrak g}_{\mathfrak g_{\bar0}}\) is a split monomorphism, and the counit \(\operatorname{Res}^{\mathfrak g}_{\mathfrak g_{\bar0}}\!\circ\!\operatorname{Ind}^{\mathfrak g}_{\mathfrak g_{\bar0}}\twoheadrightarrow \operatorname{Id}_{\mathcal W_{\bar0}}\) is a split epimorphism.
(4) \(\operatorname{Res}^{\mathfrak g}_{\mathfrak g_{\bar0}}\circ \operatorname{Ind}^{\mathfrak g}_{\mathfrak g_{\bar0}}\cong \Lambda(\mathfrak g_{\bar1})\otimes -\) on \(\mathcal W_{\bar0}\).
\end{lemma}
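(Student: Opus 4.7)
The strategy is to reduce everything to the PBW decomposition for Lie superalgebras,
\[
U(\mathfrak g)\;\cong\;\Lambda(\mathfrak g_{\bar 1})\otimes U(\mathfrak g_{\bar 0})\;\cong\;U(\mathfrak g_{\bar 0})\otimes \Lambda(\mathfrak g_{\bar 1}),
\]
as left (respectively right) \(U(\mathfrak g_{\bar 0})\)-modules, with \(\mathfrak g_{\bar 0}\) acting on \(\Lambda(\mathfrak g_{\bar 1})\) through the adjoint representation, extended as a derivation of the exterior algebra. This identifies \(U(\mathfrak g)\) as a finitely generated free module of rank \(2^{mn}\) on both sides, which is the engine for the whole lemma.

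I would begin by proving part (4) directly: applying the left-sided PBW isomorphism and tensoring with \(M\) over \(U(\mathfrak g_{\bar 0})\) gives \(\operatorname{Res}\operatorname{Ind}(M)\cong \Lambda(\mathfrak g_{\bar 1})\otimes M\) as diagonal \(\mathfrak g_{\bar 0}\)-modules. With (4) in hand, part (1) is essentially automatic: bilateral freeness of \(U(\mathfrak g)\) over \(U(\mathfrak g_{\bar 0})\) yields exactness of both \(\operatorname{Ind}\) and \(\operatorname{Coind}\); restriction is tautologically exact; weight-space preservation uses \(\mathfrak h\subset\mathfrak g_{\bar 0}\) for \(\operatorname{Res}\), and finite-dimensionality of \(\Lambda(\mathfrak g_{\bar 1})\) for \(\operatorname{Ind}\) and \(\operatorname{Coind}\); and compatibility with the parity assignment \(p\) is guaranteed by its defining shift property under odd root shifts, which are exactly the \(\mathfrak h\)-weights appearing in \(\Lambda(\mathfrak g_{\bar 1})\).

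The main obstacle is part (2), the natural isomorphism \(\operatorname{Ind}\cong\operatorname{Coind}\), which encodes the Frobenius property of the inclusion \(U(\mathfrak g_{\bar 0})\hookrightarrow U(\mathfrak g)\). The plan is to fix an ordered basis \(\xi_{1},\dots,\xi_{N}\) of \(\mathfrak g_{\bar 1}\) with top monomial \(\omega=\xi_{1}\cdots\xi_{N}\), and to construct a natural transformation \(\operatorname{Coind}\Rightarrow\operatorname{Ind}\) by
\[
\phi\;\longmapsto\;\sum_{I\subseteq\{1,\dots,N\}}\pm\,\xi_{I}\otimes \phi(\xi_{I^{c}}),
\]
summed over dual PBW bases with appropriate signs. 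The verification splits into (i) \(\mathfrak g\)-equivariance, which reduces to showing that the Nakayama twist of this Frobenius extension is trivial (modulo the parity convention defining \(\mathcal W\subset s\mathcal W\)), and (ii) bijectivity, which follows from a PBW rank count. The delicate sign bookkeeping is handled by the cited results of Bell and Coulembier.

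Finally, for (3), the unit \(M\hookrightarrow \operatorname{Res}\operatorname{Ind}(M)\cong\Lambda(\mathfrak g_{\bar 1})\otimes M\) sending \(m\mapsto 1\otimes m\) is split by the augmentation \(\Lambda(\mathfrak g_{\bar 1})\twoheadrightarrow \Lambda^{0}=k\) tensored with \(\operatorname{id}_{M}\); the counit \(\operatorname{Res}\operatorname{Ind}(M)\twoheadrightarrow M\) obtained via (2) is split by the inclusion \(k=\Lambda^{0}\hookrightarrow \Lambda(\mathfrak g_{\bar 1})\) tensored with \(\operatorname{id}_{M}\). Both splittings are \(\mathfrak g_{\bar 0}\)-equivariant because the adjoint action preserves exterior degree and acts trivially on the one-dimensional degree-zero component, so (3) follows immediately from (4) and (2).
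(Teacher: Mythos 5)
The paper itself does not prove this lemma; it simply cites Bell and Coulembier, so there is no in-text argument to compare against. Your PBW/Frobenius-extension outline is the standard route those references take, and parts (1) and (4) are essentially right (for the $\mathfrak g_{\bar 0}$-equivariance in (4) one should use the super-symmetrization map rather than the ordered-product PBW basis, but that is a cosmetic point).

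There is, however, a concrete error in your treatment of part (3). Under the Frobenius isomorphism $\operatorname{Coind}^{\mathfrak g}_{\mathfrak g_{\bar 0}}\cong\operatorname{Ind}^{\mathfrak g}_{\mathfrak g_{\bar 0}}$ of part (2) that you build from the Berezin/top-form pairing, the counit $\operatorname{Res}\operatorname{Coind}(M)\to M$, $\phi\mapsto\phi(1)$, becomes the projection onto the \emph{top} degree $\Lambda^{\dim\mathfrak g_{\bar 1}}(\mathfrak g_{\bar 1})\otimes M$, not onto $\Lambda^{0}\otimes M$: in your formula $\phi\mapsto\sum_{I}\pm\,\xi_{I}\otimes\phi(\xi_{I^{c}})$, the value $\phi(1)=\phi(\xi_{\varnothing})$ appears as the coefficient of the top monomial $\xi_{\{1,\dots,N\}}$. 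Consequently the map $k=\Lambda^{0}\hookrightarrow\Lambda(\mathfrak g_{\bar 1})$ tensored with $\operatorname{id}_{M}$ composes to zero with the counit (for $N\ge 1$), not to the identity, so it is not a splitting. The correct splitting is the inclusion of the top exterior power $\Lambda^{N}(\mathfrak g_{\bar 1})\otimes M\hookrightarrow\Lambda(\mathfrak g_{\bar 1})\otimes M$.

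This is not merely a sign/index slip, because the $\mathfrak g_{\bar 0}$-equivariance of that splitting, and likewise the triviality of the Nakayama twist you invoke in (2), rest on the fact that $\Lambda^{\dim\mathfrak g_{\bar 1}}(\mathfrak g_{\bar 1})\cong L_{\bar 0}(0)$ is the \emph{trivial} $\mathfrak g_{\bar 0}$-module — because the sum of all odd roots of $\mathfrak{gl}(m|n)$ is zero (and $\dim\mathfrak g_{\bar 1}=2mn$ is even, so there is no parity shift either). Attributing this to ``the adjoint action acts trivially on the one-dimensional degree-zero component'' or to ``the parity convention defining $\mathcal W$'' is the wrong reason and would not distinguish the general case, where one only has $\operatorname{Ind}\cong\operatorname{Coind}\circ(-\otimes\Lambda^{\dim\mathfrak g_{\bar 1}}\mathfrak g_{\bar 1})$. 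The paper flags exactly this subtlety in the remark immediately following the lemma, so your argument should isolate and use the triviality of $\Lambda^{\dim\mathfrak g_{\bar 1}}(\mathfrak g_{\bar 1})$ explicitly.
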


Note that \( \Lambda^{\dim \mathfrak{g}_{\overline{1}}}(\mathfrak{g}_{\overline{1}}) \cong \Lambda^{2mn}(\mathfrak{g}_{\overline{1}}) \cong L_{\bar0}(0)\) in our general linear setting, but in general,  we have \(\operatorname{Ind}^{\mathfrak g}_{\mathfrak g_{\bar0}}\cong \operatorname{Coind}^{\mathfrak g}_{\mathfrak g_{\bar0}}\circ(-\otimes \Lambda^{\dim\mathfrak g_{\bar1}}\mathfrak g_{\bar1})\).
 \( L_{\bar0}(0) = \Lambda^0(\mathfrak{g}_{\overline{1}}) \) and \( \Lambda^{\dim \mathfrak{g}_{\overline{1}}}(\mathfrak{g}_{\overline{1}}) \) are direct summands of \( \Lambda(\mathfrak{g}_{\overline{1}}) \). 
 
\begin{definition}
For a module \(M\) in the category \(\mathcal W\), the character \(\operatorname{ch} M\) is a formal sum that encodes the dimensions of the weight spaces of \(M\): \(\operatorname{ch} M := \sum_{\lambda} \dim M_{\lambda} e^{\lambda}\).
\end{definition}

Lie superalgebra \(\mathfrak{gl}(m|n)\) has an antiautomorphism \(\tau\) given by the formula \(\tau(E_{ij}) := -(-1)^{|i|(|i|+|j|)}\,E_{ji}\).
For a weight module \(M=\bigoplus_{\nu\in\mathfrak h^{*}} M_\nu\), we let \(M^\vee := \bigoplus_{\nu} M_\nu^{*}\) be the restricted dual of \(M\).
We may define an action of \(\mathfrak{gl}(m|n)\) on \(M^\vee\) by \((g\cdot f)(x):= -\,f(\tau(g)x)\), \(f\in M^\vee\), \(g\in \mathfrak{gl}(m|n)\), \(x\in M\).
This defines a contravariant duality functor \( (-)^\vee : \mathcal W \to \mathcal W \).

\begin{proposition}[Properties of contragredient duality]\label{prop:duality_properties}

\begin{enumerate}
  \item $(-)^\vee$ is an exact contravariant functor.
  \item There is a natural isomorphism $M \xrightarrow{\sim} M^{\vee\vee}$ for all $M\in\mathcal W$ .
  \item $\operatorname{ch}(M^\vee)=\operatorname{ch}(M)$; in particular, the weight multiplicities are preserved.
  \item Since \(\tau\) is an automorphism of \(\mathfrak g\) compatible with the inclusions of subalgebras, there is a natural isomorphism of functors \(\mathrm{Res}_{\mathfrak g_{\bar 0}}^{\mathfrak g}\cong(\,\cdot\,)^{\vee}\circ \mathrm{Res}_{\mathfrak g_{\bar 0}}^{\mathfrak g}\circ(\,\cdot\,)^{\vee}\). Consequently, \(\mathrm{Ind}_{\mathfrak g_{\bar 0}}^{\mathfrak g}\cong(\,\cdot\,)^{\vee}\circ \mathrm{Ind}_{\mathfrak g_{\bar 0}}^{\mathfrak g}\circ(\,\cdot\,)^{\vee}\).
\end{enumerate}
\end{proposition}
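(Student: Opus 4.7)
The plan is to verify the four assertions in order; (1)--(3) reduce to standard facts about dualization on finite-dimensional weight spaces, while (4) combines the $\mathfrak g_{\bar 0}$-compatibility of $\tau$ with uniqueness of adjoints.

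For (1), since every $M\in\mathcal W$ is a direct sum $\bigoplus_\nu M_\nu$ of finite-dimensional weight spaces and $(-)^\vee$ is defined weight-space by weight-space, exactness reduces to the exactness of ordinary linear duality on finite-dimensional vector spaces; contravariance is built into the definition $(g\cdot f)(x)=-f(\tau(g)x)$. For (2) and (3) I would take the canonical evaluation map $\mathrm{ev}_M\colon M\to M^{\vee\vee}$, $x\mapsto (f\mapsto f(x))$, and check by a short computation using $\tau^2=\mathrm{id}$ (together with the two compensating signs in the dual action) that $\mathrm{ev}_M$ is $\mathfrak g$-linear and degree-preserving; it is then automatically an isomorphism on each weight space by ordinary finite-dimensional linear algebra, and naturality in $M$ is clear. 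Character preservation follows immediately from $\dim M_\nu^\ast=\dim M_\nu$.

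For the restriction statement in (4), I would first observe that $\tau$ restricts to an antiautomorphism of $\mathfrak g_{\bar 0}$ of the same shape used to define contragredient duality for $\mathfrak g_{\bar 0}$-modules, since $\tau(E_{ij})=-(-1)^{|i|(|i|+|j|)}E_{ji}$ visibly preserves the block-diagonal part $\mathfrak{gl}(m)\oplus\mathfrak{gl}(n)$. Consequently the $\mathfrak g_{\bar 0}$-module $\mathrm{Res}_{\mathfrak g_{\bar 0}}^{\mathfrak g}(M^\vee)$ agrees on the nose with $(\mathrm{Res}_{\mathfrak g_{\bar 0}}^{\mathfrak g}M)^\vee$. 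For the induction statement I would invoke uniqueness of adjoints. Since $(-)^\vee$ is a contravariant self-equivalence with $(-)^{\vee\vee}\cong\mathrm{Id}$, it interchanges left and right adjoints: the adjunction $\mathrm{Ind}\dashv\mathrm{Res}$ therefore upgrades formally to $(-)^\vee\circ\mathrm{Res}\circ(-)^\vee \;\dashv\; (-)^\vee\circ\mathrm{Ind}\circ(-)^\vee$. Substituting the first isomorphism of (4) on the left gives $\mathrm{Res}\dashv (-)^\vee\circ\mathrm{Ind}\circ(-)^\vee$, and since $\mathrm{Res}\dashv\mathrm{Coind}\cong\mathrm{Ind}$ by \cref{indres}(2), uniqueness of right adjoints produces the desired natural isomorphism.

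The main obstacle, such as it is, is purely bookkeeping with parities and signs: the supercommutator together with the sign in the dual action requires care in checking that $\mathrm{ev}_M$ is a morphism in $\mathcal W$ (rather than only in $s\mathcal W$), and one must track that the formal adjunction interchange used in (4) is compatible with the parity function $p$ chosen when defining $\mathcal W$ inside $s\mathcal W$.
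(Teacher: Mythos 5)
The paper states this proposition without proof (it sits in the ``well known basics'' section), so there is no in-paper argument to compare against. Your overall plan---weight-space-by-weight-space dualization for (1) and (3), an explicit double-dual map for (2), and commutation with restriction plus uniqueness of adjoints for (4)---is the standard route and is structurally sound. Items (1), (3), and (4) are correct: $\tau$ visibly preserves $\mathfrak g_{\bar 0}$ so $\mathrm{Res}\circ(\cdot)^\vee=(\cdot)^\vee\circ\mathrm{Res}$ on the nose, and the adjoint-swapping argument $(-)^\vee\circ\mathrm{Res}\circ(-)^\vee\dashv(-)^\vee\circ\mathrm{Ind}\circ(-)^\vee$ combined with $\mathrm{Ind}\cong\mathrm{Coind}$ (\cref{indres}(2)) does give the induction isomorphism.

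However, your treatment of (2) contains a concrete error: for the $\tau$ of this paper one has $\tau^2(E_{ij})=(-1)^{|i|+|j|}E_{ij}$, i.e.\ $\tau^2$ is the parity automorphism of $\mathfrak g$, \emph{not} the identity. (Check: for $|i|\neq|j|$, $\tau(E_{ij})\tau(E_{ji})$ picks up $(-1)^{|i|^2+|j|^2}=(-1)^{|i|+|j|}=-1$.) As a result the naive evaluation $\mathrm{ev}_M(x)(f)=f(x)$ fails to be $\mathfrak g$-linear: one computes $(g\cdot\mathrm{ev}_M(x))(f)=f(\tau^2(g)x)=(-1)^{|g|}f(gx)$, which disagrees with $\mathrm{ev}_M(gx)(f)=f(gx)$ on odd $g$. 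The correct double-dual isomorphism is the parity-twisted evaluation $\phi_M(x)(f):=(-1)^{|x|}f(x)$; with the paper's convention $(g\cdot f)(x)=-f(\tau(g)x)$ this gives
$(g\cdot\phi_M(x))(f)=(-1)^{|x|+|g|}f(gx)=\phi_M(gx)(f)$,
so $\phi_M$ is an even $\mathfrak g$-linear isomorphism, natural in $M$. So the ``compensating sign'' you flag as the main obstacle is real, but it does not cancel against $\tau^2=\mathrm{id}$ (which is false); rather it is absorbed by choosing the twisted evaluation map. You should replace the sentence invoking $\tau^2=\mathrm{id}$ with the statement that $\tau^2$ is the parity automorphism and that the natural isomorphism in (2) is the twisted evaluation $x\mapsto(f\mapsto(-1)^{|x|}f(x))$. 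Since your argument for (4) uses $(-)^{\vee\vee}\cong\mathrm{Id}$ as a black box, that part survives once (2) is repaired.
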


\subsection{Verma modules}\label{subsec:verma-modules}

Let \( k_{\lambda}^{\mathfrak{b}} \) be the one-dimensional \( \mathfrak{b} \)-module corresponding to \( \lambda \in \mathfrak{h}^* \). The \( \mathfrak{b} \)-Verma module with highest weight \( \lambda \) is defined by
\(
M^{\mathfrak{b}}(\lambda) = U(\mathfrak{g}) \otimes_{U(\mathfrak{b})} k_{\lambda}^{\mathfrak{b}}.
\)
(Here, the parity of \( k_{\lambda}^{\mathfrak{b}} \) is chosen so that \( M^{\mathfrak{b}}(\lambda) \in \mathcal{W} \). )

Its simple top is denoted by \( L^{\mathfrak{b}}(\lambda) \). 
Similarly, for the even part \( \mathfrak{g}_{\overline{0}} \), the corresponding Verma module and simple module are denoted by \( M_{\overline{0}}(\lambda) \) and  \( L_{\overline{0}}(\lambda) \), respectively.

We define Berezinian  weight
\(
ber:=\varepsilon_1+\cdots+\varepsilon_m-(\delta_1+\cdots+\delta_n),
\)
A weight \(\lambda\) is orthogonal to all roots if and only if \(\lambda=tber\) for some \(t\in k\) (i.e. a scalar multiple of the Berezinian weight.
\(\mathrm{Ber}:= L^{()}( ber)\ \). 
For any \(t\in k\), the simple highest weight module \(L^{()}(t ber)\) is one-dimensional; indeed \(L^{()}(tber)\cong \mathrm{Ber}^{\otimes t}\). 

\begin{definition}[integral Weyl vectors \cite{brundan2014representations}]
Define the following vectors in \( \mathfrak{h}^* \)
\[
\rho_{\overline{0}} :=  \frac{1}{2} \sum_{\beta \in \Delta_{\overline{0}}^+} \beta, \quad
\rho_{\overline{1}}^{\mathfrak{b}}:=  \frac{1}{2} \sum_{\gamma \in \Delta_{\overline{1}}^{\mathfrak{b} +}} \gamma.
\]
We write \(\rho_{\overline{1}}:=\rho_{\overline{1}}^{()}\).
Here integral Weyl vector \(\rho:=-\varepsilon_2-2\varepsilon_3-\cdots-(m-1)\varepsilon_n+(m-1)\delta_{m+1}+(m-2)\delta_{m+2}+\cdots+(m-n)\delta_{m+n}\).

One checks that \(\rho=\rho_{\overline{0}} - \rho_{\overline{1}}^{()}-\frac{1}{2}(m+n-1)ber\)

\[\rho^\mathfrak{b} := \rho_{\overline{0}} - \rho_{\overline{1}}^{\mathfrak{b}}-\frac{1}{2}(m+n-1)ber\]

It is worth noting that \( \rho^{r_\alpha \mathfrak{b}} = \rho^{\mathfrak{b}} + \alpha \) for a b-simple root  \(\alpha\).
If \( \alpha \in \Pi^{\mathfrak{b}} \), then we have \( (\rho^{\mathfrak{b}}, \alpha_i^{\mathfrak{b}}) = \frac{1}{2} (\alpha, \alpha) \). In particular, if \( \alpha\) is isotropic, then we have \( (\rho^{\mathfrak{b}}, \alpha) = 0 \).

\end{definition}

\begin{lemma}\label{5.2.ch_eq}
For \( \mathfrak{b}, \mathfrak{b}' \in L(m,n) \) and \( \lambda ,\lambda'\in \mathfrak{h}^* \), the following statements hold:
\begin{enumerate}
    \item \( \operatorname{ch} M^{\mathfrak{b}}(\lambda - \rho^{\mathfrak{b}}) = \operatorname{ch} M^{\mathfrak{b}'}(\lambda - \rho^{\mathfrak{b}'}) \);
    \item \( \dim M^{\mathfrak{b}'}(\lambda - \rho^{\mathfrak{b}'})_{\lambda - \rho^{\mathfrak{b}}} = 1 \);
    \item \( \operatorname{ch} M^{\mathfrak{b}}(\lambda) = \operatorname{ch} M^{\mathfrak{b}'}(\lambda') \iff \lambda + \rho^{\mathfrak{b}} = \lambda' + \rho^{\mathfrak{b}'} \);
    \item \( \dim \operatorname{Hom}(M^{\mathfrak{b}}(\lambda - \rho^{\mathfrak{b}}), M^{\mathfrak{b}'}(\lambda - \rho^{\mathfrak{b}'})) = 1 \);
    \item If \( M^{\mathfrak{b}}(\lambda - \rho^{\mathfrak{b}}) \not\cong M^{\mathfrak{b}'}(\lambda - \rho^{\mathfrak{b}'}) \), then \( M^{\mathfrak{b}}(\lambda - \rho^{\mathfrak{b}}) \) is not isomorphic to any \( \mathfrak{b}' \)-Verma module.
    \item     Furthermore, let $\mathfrak{b}$ and $\mathfrak{b}'$ be related by an odd reflection with respect to a
$\mathfrak{b}$–simple odd root~$\alpha$.\begin{enumerate}

        \item if \( (\lambda, \alpha) \neq 0 \), then we have
\(M^{\mathfrak{b}}(\lambda - \rho^{\mathfrak{b}}) \cong M^{\mathfrak{b}'}(\lambda - \rho^{\mathfrak{b}'}) , \quad
L^{\mathfrak{b}}(\lambda - \rho^{\mathfrak{b}}) \cong L^{\mathfrak{b}'}(\lambda - \rho^{\mathfrak{b}'}).
\)
\item If \( (\lambda, \alpha) = 0 \), then we have
\(
M^{\mathfrak{b}}(\lambda - \rho^{\mathfrak{b}}) \not\cong M^{\mathfrak{b}'}(\lambda - \rho^{\mathfrak{b}'}) , \quad
L^{\mathfrak{b}}(\lambda) \cong L^{\mathfrak{b}'}(\lambda) .
\)
    \end{enumerate}
\end{enumerate}
\end{lemma}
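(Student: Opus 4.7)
The plan is to derive everything from a single character identity (Parts 1, 2, 3) and then a single explicit calculation with the odd-reflected highest weight vector (Parts 4, 5, 6).

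First I would write down the Poincaré--Birkhoff--Witt character
\[
\operatorname{ch} M^{\mathfrak b}(\mu) \;=\; e^{\mu}\prod_{\gamma \in \Delta_{\bar 0}^{+}}(1-e^{-\gamma})^{-1}\prod_{\beta \in \Delta_{\bar 1}^{\mathfrak b,+}}(1+e^{-\beta}),
\]
and substitute $\mu = \lambda - \rho^{\mathfrak b}$. Grouping the factor $e^{\rho_{\bar 1}^{\mathfrak b}}$ (from $-\rho^{\mathfrak b}$) with the odd product yields $\prod_{\beta\in\Delta_{\bar 1}^{\mathfrak b,+}}(e^{\beta/2}+e^{-\beta/2})$, which depends only on the unordered pairs $\{\pm\beta\}$ and is therefore Borel-independent; the even factors and the $\mathrm{ber}$-shift in $\rho^{\mathfrak b}$ are also Borel-independent. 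This closed form proves Part 1 directly, Part 2 by reading off the highest-weight coefficient, and Part 3 by isolating the prefactor, which takes the form $e^{\lambda+\rho^{\mathfrak b}}$ times a Borel-independent denominator. Part 5 is then a formal contrapositive of Part 3.

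For Part 4 I would use the universal property $\operatorname{Hom}(M^{\mathfrak b}(\lambda-\rho^{\mathfrak b}), N) \cong \{n \in N_{\lambda-\rho^{\mathfrak b}} : \mathfrak n^{\mathfrak b,+}n=0\}$. The target weight space in $M^{\mathfrak b'}(\lambda-\rho^{\mathfrak b'})$ is one-dimensional by Part 2, and any vector in it is automatically $\mathfrak b$-highest: no weight strictly $\mathfrak b$-above $\lambda-\rho^{\mathfrak b}$ appears on the right-hand side, since by Part 1 none appears on the left (where $\lambda-\rho^{\mathfrak b}$ is the highest weight).

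For Part 6, let $v$ be the highest weight vector in $M^{\mathfrak b}(\lambda-\rho^{\mathfrak b})$. A standard odd-reflection check---using $\alpha \in \Pi^{\mathfrak b}$ together with the simple-root fact that $\beta-\alpha$ is not a root for any other $\beta\in\Pi^{\mathfrak b}$---shows $e_{-\alpha}v$ is killed by $\mathfrak n^{\mathfrak b',+}$, producing via Part 4 the unique nonzero map $\varphi\colon M^{\mathfrak b'}(\lambda-\rho^{\mathfrak b'}) \to M^{\mathfrak b}(\lambda-\rho^{\mathfrak b})$ sending the $\mathfrak b'$-highest weight vector to $e_{-\alpha}v$. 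The crucial identity is
\[
e_\alpha e_{-\alpha} v \;=\; [e_\alpha,e_{-\alpha}]\,v \;=\; (\lambda-\rho^{\mathfrak b},\alpha)\,v \;=\; (\lambda,\alpha)\,v,
\]
using $(\rho^{\mathfrak b},\alpha)=0$ because $\alpha$ is isotropic. In case (a), $v$ lies in the image of $\varphi$, so $\varphi$ is surjective, and equal characters (Part 1) force it to be an isomorphism; taking simple tops yields the simple-module isomorphism. In case (b) the image of $\varphi$ lies strictly $\mathfrak b$-below $\lambda-\rho^{\mathfrak b}$ and so misses $v$, so $\varphi$ cannot be an isomorphism; the simple-module identification is the standard odd-reflection formula for simple modules, obtained by running the same calculation inside $L^{\mathfrak b}(\lambda-\rho^{\mathfrak b})$ and matching the resulting $\mathfrak b'$-highest weight vector. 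The main technical nuisance, and essentially the only real work, is verifying that $e_{-\alpha}v$ is annihilated by all $\mathfrak b'$-positive root vectors; everything else is bookkeeping around the Borel-independent character.
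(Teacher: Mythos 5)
The paper states this lemma without proof in its ``Well known basics'' section, so there is no in-paper argument to compare against; your proposal fills the gap using the standard techniques, and it is essentially correct. The PBW-character computation for Parts 1--3 is right: after the $\rho^{\mathfrak b}$-shift the odd factor becomes $\prod_{\beta}(e^{\beta/2}+e^{-\beta/2})$ which is manifestly Borel-independent, and Part 3 reduces to comparing the $e^{\lambda+\rho^{\mathfrak b}}$ prefactors. Part 4 via the universal property plus the observation that the one-dimensional weight space is automatically singular (no weights strictly $\mathfrak b$-above it occur) is exactly right, and Part 5 is indeed just a contrapositive of Part 3 via the universal-property pinning-down of which $\mathfrak b'$-Verma could be isomorphic.

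Two small things in Part 6 deserve slightly more care than you give them. First, when verifying that $e_{-\alpha}v$ is $\mathfrak n^{\mathfrak b',+}$-singular, your cited fact that $\beta-\alpha$ is not a root for $\beta\in\Pi^{\mathfrak b}\setminus\{\alpha\}$ handles only the $\mathfrak b'$-simple roots $\beta$ with $(\beta,\alpha)=0$; for $\beta$ with $(\beta,\alpha)\neq 0$ the corresponding $\mathfrak b'$-simple root is $\beta+\alpha$, and you must additionally check $e_{\beta+\alpha}e_{-\alpha}v=[e_{\beta+\alpha},e_{-\alpha}]v\in k\,e_\beta v=0$, together with $e_{-\alpha}^2=0$ from isotropy of $\alpha$. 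Second, in case (b) the claim that ``the image of $\varphi$ lies strictly $\mathfrak b$-below $\lambda-\rho^{\mathfrak b}$'' needs the one-line PBW justification: $\operatorname{Im}\varphi = U(\mathfrak n^{\mathfrak b'-})\,e_{-\alpha}v$, and since $-\alpha$ is $\mathfrak b'$-simple, the only way to reach weight $\lambda-\rho^{\mathfrak b}$ from $e_{-\alpha}v$ inside that orbit is by applying $e_{\alpha}$, which kills it because $(\lambda,\alpha)=0$; hence the top weight space is missed and $\varphi$ is not surjective. Both are minor, and you flag the first yourself as the technical nuisance; with them spelled out the argument is complete and is the expected one.
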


\subsection{Category $\mathcal O$}\label{subsec:category-o}

\begin{definition}
Let \( \mathfrak{b} \in  L(m,n) \).  The category $\mathcal{O}^{\mathfrak b}$ is defined as the Serre subcategory of $\mathcal{W}$ generated by 
\(
\{ L^{\mathfrak{b}}(\lambda) \mid \lambda \in \mathfrak{h}^* \}.
\)
According to \cref{5.2.ch_eq}, as an Serre subcategory, it depends only on \( \mathfrak{b}_{\overline{0}} \) (however,the highest weight structure depends on \( \mathfrak{b} \)).
\end{definition}

It is well known that the BGG category $\mathcal O$ contains all $\mathfrak b$–Verma modules
for any  \( \mathfrak{b} \in  L(m,n) \).
Moreover, $\mathcal O$ is a finite–length abelian category.
Similarly, by replacing \( \mathfrak{g} \) with \( \mathfrak{g}_{\overline{0}} \), we define \( \mathcal{O}_{\overline{0}} \) as a full subcategory of \( \mathcal{W}_{\overline{0}} \) .

For a Borel subalgebra \(\mathfrak b\), we call a module of the form
\(M^{\mathfrak b}(\lambda)^\vee\), i.e. the contragredient dual of the
\(\mathfrak b\)–Verma module, a \emph{\(\mathfrak b\)–dual Verma module}.
The contragredient dual functor $(-)^\vee$ restricts to the BGG category, i.e.\ $(-)^\vee:\mathcal O^{\mathrm{op}}\to\mathcal O$ is exact and contravariant; in particular $M^{\mathfrak b}(\lambda)^\vee$ belongs to $\mathcal O$ and $L^{\mathfrak b}(\lambda)^\vee\simeq L^{\mathfrak b}(\lambda)$.

Since each of \(\operatorname{Res}^{\mathfrak g}_{\mathfrak g_{\bar0}}\), \(\operatorname{Ind}^{\mathfrak g}_{\mathfrak g_{\bar0}}\), \(\operatorname{Coind}^{\mathfrak g}_{\mathfrak g_{\bar0}}\) is exact and restricts between \(\mathcal O_{\bar0}\) and \(\mathcal O\), for each choice of Borel subalgebra \(\mathfrak b\), the category \(\mathcal O\) carries a highest weight structure with standard objects the \(\mathfrak b\)–Verma modules \(M^{\mathfrak b}(\lambda)\) (and costandard objects \((M^{\mathfrak b}(\lambda))^\vee\)); the weight poset is \((\mathfrak h^*,\le_{\mathfrak b})\) where \(\mu\le_{\mathfrak b}\lambda\) iff \(\lambda-\mu\) is a sum of roots in \(\Delta^{\mathfrak b+}\).
For a weight $\lambda$, 
we write $P(\lambda)$ and $I(\lambda)$ for its projective cover and injective hull in $\mathcal O$, respectively.
Similarly, we write
$P_{\bar0}(\lambda)$, $I_{\bar0}(\lambda)$ for  projective cover and injective hull of  $L_{\bar0}(\lambda)$ in in $\mathcal O_{\bar0}$ , respectively .

For $\mathcal O$ one has BGG reciprocity
\(
\big[P^{\mathfrak b}(\lambda):M^{\mathfrak b}(\mu)\big]
\;=\;
\big[M^{\mathfrak b}(\mu):L^{\mathfrak b}(\lambda)\big].
\)  

For fixed \(\mathfrak b\in B(\mathfrak g)\), an object \(M\in\mathcal O\) has a \(\mathfrak b\)-Verma flag if it admits a finite filtration \(0=F_0\subset F_1\subset\cdots\subset F_r=M\) with subquotients \(F_i/F_{i-1}\cong M^{\mathfrak b}(\lambda_i)\). Let \(\mathcal F\Delta^{\mathfrak b}\subset\mathcal O\) denote the full subcategory of objects with a \(\mathfrak b\)-Verma flag. Similarly, we define the category of modules with a \(\mathfrak b\)-dual Verma flag, \(\mathcal{F}\!\nabla^{\mathfrak b}\subset\mathcal O\), and the category of modules with an even Verma flag, \(\mathcal{F}\!\Delta_{\bar 0}\subset\mathcal O_{\bar 0}\).

\begin{proposition}\label{O.indproj}
Let \( \lambda \in \mathfrak{h}^* \).  
Fix a Borel subalgebra \( \mathfrak{\bar{b}} \).

1.  The category \( \mathcal{F}{\Delta^{\mathfrak{\bar{b}}}} \) is closed under direct summands.
2.\( \operatorname{Ind}_{\mathfrak{g}_{\overline{0}}}^{\mathfrak{g}} M_{\overline{0}}(\lambda) ,
\operatorname{Ind}_{\mathfrak{g}_{\overline{0}}}^{\mathfrak{g}} P_{\overline{0}}(\lambda) , P^{\mathfrak{\bar{b}}}(\lambda) \in \bigcap_{\mathfrak{b} \in \mathfrak{B}} \mathcal{F}{\Delta^{\mathfrak{b}}}; \)
3.  \(\operatorname{Res}^{\mathfrak g}_{\mathfrak g_{\bar 0}} M^{\mathfrak {\bar{b}}}(\lambda)\in \mathcal{F}\!\Delta_{\bar 0}.\)

\end{proposition}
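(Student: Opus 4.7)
The plan is to treat the three parts in order, since (2c) uses (1), and (3) stands on its own.

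For \textbf{(1)}, I would rely on the standard characterization that $M\in\mathcal{F}\Delta^{\mathfrak{\bar{b}}}$ exactly when $\operatorname{Ext}^1_{\mathcal O}(M,M^{\mathfrak{\bar{b}}}(\mu)^{\vee})=0$ for every $\mu$. This can be established by an induction on $\mathfrak{\bar{b}}$-weights internal to the finite-length category $\mathcal O$ and does not require (2c). Since $\operatorname{Ext}^1(-,N)$ is additive in the first argument, the vanishing is inherited by any direct summand.

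For \textbf{(2a)}, fix an arbitrary Borel $\mathfrak{b}$ and rewrite
\[
\operatorname{Ind}_{\mathfrak{g}_{\bar 0}}^{\mathfrak{g}}M_{\bar 0}(\lambda)
\;=\;U(\mathfrak{g})\otimes_{U(\mathfrak{b}^{\mathrm{st}}_{\bar 0})}k_\lambda
\;=\;U(\mathfrak{g})\otimes_{U(\mathfrak{b})}\bigl(U(\mathfrak{b})\otimes_{U(\mathfrak{b}^{\mathrm{st}}_{\bar 0})}k_\lambda\bigr).
\]
By PBW the inner factor is isomorphic as a $\mathfrak{b}$-module to $\Lambda(\mathfrak{n}^{\mathfrak{b}+}_{\bar 1})\otimes k_\lambda$. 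The wedge-degree filtration $\Lambda^{\ge d}(\mathfrak{n}^{\mathfrak{b}+}_{\bar 1})\otimes k_\lambda$ is $\mathfrak{b}$-stable because $\mathfrak{b}$ is a subalgebra, forcing $[\mathfrak{n}^{\mathfrak{b}+}_{\bar 0},\mathfrak{n}^{\mathfrak{b}+}_{\bar 1}]\subseteq\mathfrak{n}^{\mathfrak{b}+}_{\bar 1}$; refining each graded piece by any total order on size-$d$ subsets $S\subseteq\Delta^{\mathfrak{b}+}_{\bar 1}$ compatible with $\mathfrak{b}^{\mathrm{st}}_{\bar 0}$-dominance on $\sigma_S:=\sum_{\gamma\in S}\gamma$ produces a $\mathfrak{b}$-composition series with one-dimensional subquotients $k^{\mathfrak{b}}_{\lambda+\sigma_S}$. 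Applying the exact functor $U(\mathfrak{g})\otimes_{U(\mathfrak{b})}(-)$ yields a $\mathfrak{b}$-Verma flag of $\operatorname{Ind}_{\mathfrak{g}_{\bar 0}}^{\mathfrak{g}}M_{\bar 0}(\lambda)$ with factors $\{M^{\mathfrak{b}}(\lambda+\sigma_S)\}_S$. For \textbf{(2b)}, combine the classical $\mathfrak{b}^{\mathrm{st}}_{\bar 0}$-Verma flag of $P_{\bar 0}(\lambda)$ with the exactness of $\operatorname{Ind}$ (\cref{indres}) and apply (2a) to each resulting induced Verma. For \textbf{(2c)}, $\operatorname{Ind}$ is left adjoint to the exact functor $\operatorname{Res}$ and so preserves projectives; Frobenius reciprocity
\[
\operatorname{Hom}_{\mathcal O}\!\bigl(\operatorname{Ind}P_{\bar 0}(\lambda),L^{\mathfrak{\bar{b}}}(\lambda)\bigr)
\;\cong\;\operatorname{Hom}_{\mathcal O_{\bar 0}}\!\bigl(P_{\bar 0}(\lambda),\operatorname{Res}L^{\mathfrak{\bar{b}}}(\lambda)\bigr)
\]
is nonzero because the highest weight vector of $L^{\mathfrak{\bar{b}}}(\lambda)$ is automatically a $\mathfrak{b}^{\mathrm{st}}_{\bar 0}$-highest weight vector of weight $\lambda$. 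Hence $P^{\mathfrak{\bar{b}}}(\lambda)$ splits off as a summand of $\operatorname{Ind}P_{\bar 0}(\lambda)$, and (1) together with (2b) finishes the argument.

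For \textbf{(3)}, one cannot use parabolic induction through $\mathfrak{g}_{\bar 0}\oplus\mathfrak{n}^{\mathfrak{\bar{b}}+}_{\bar 1}$ because this subspace is in general \emph{not} a subalgebra: negative even roots can carry positive odd roots to negative ones. Instead I would construct the $\mathfrak{g}_{\bar 0}$-filtration directly via PBW. Writing a basis of $M^{\mathfrak{\bar{b}}}(\lambda)$ as $\{u\cdot\prod_{\gamma\in S}f_\gamma\cdot v_\lambda\}$ with $u$ in the PBW basis of $U(\mathfrak{n}^{\mathfrak{b}^{\mathrm{st}}-}_{\bar 0})$ and $S\subseteq\Delta^{\mathfrak{\bar{b}}+}_{\bar 1}$, let $H_k\subseteq M^{\mathfrak{\bar{b}}}(\lambda)$ be the span of those vectors with $|S|\le k$. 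The crucial point is that $H_k$ is $\mathfrak{g}_{\bar 0}$-stable: commuting $e_\alpha\in\mathfrak{g}_{\bar 0}$ through the $f_\gamma$'s, each nonzero commutator $[e_\alpha,f_\gamma]$ is either a negative odd root vector (keeping $|S|$ fixed) or a positive odd root vector $e_\beta$, which upon further rightward commutation toward $v_\lambda$ (which it annihilates) produces only even-valued commutators $[e_\beta,f_{\gamma'}]$, strictly lowering the odd degree. Within each $H_k/H_{k-1}$, a further refinement by a total order on size-$k$ subsets $S$ refining $\mathfrak{b}^{\mathrm{st}}_{\bar 0}$-dominance on $\lambda-\sigma_S$ yields successive quotients that one identifies with $M_{\bar 0}(\lambda-\sigma_S)$ via the universal property together with a character count. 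This PBW bookkeeping—confirming $\mathfrak{g}_{\bar 0}$-stability of $H_k$ and the compatibility of the weight ordering with the action of $\mathfrak{n}^{\mathfrak{b}^{\mathrm{st}}+}_{\bar 0}$—is the main technical obstacle; the other parts then follow essentially formally from Frobenius reciprocity, exactness of induction, and additivity of Ext.
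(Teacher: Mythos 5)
The paper gives \cref{O.indproj} without proof (it is stated as a standard fact in the "Well known basics" section), so there is no in-paper argument to compare against; your proposal must be judged on its own merits. Your overall strategy is the standard one and is essentially correct, but two steps need repair.

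In part (2a) you filter the inner factor $\Lambda(\mathfrak{n}^{\mathfrak{b}+}_{\bar 1})\otimes k_\lambda$ by the \emph{ascending} wedge degree $\Lambda^{\ge d}$, asserting this is $\mathfrak{b}$-stable. That is backwards: $U(\mathfrak{b})\otimes_{U(\mathfrak{b}^{\mathrm{st}}_{\bar 0})}k_\lambda$ is generated over $\mathfrak{b}$ by the degree-$0$ piece, and the even part of $\mathfrak{n}^{\mathfrak{b}+}$ can only \emph{preserve or raise} the odd PBW degree, while $\mathfrak{n}^{\mathfrak{b}+}_{\bar 1}$ strictly raises it. So the $\mathfrak{b}$-stable filtration is by $\Lambda^{\le d}$ (or one should dualize and speak of quotients rather than submodules). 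Once corrected, the rest of (2a) goes through: the associated graded is $\bigoplus_d \Lambda^d(\mathfrak{n}^{\mathfrak{b}+}_{\bar 1})\otimes k_\lambda$ with $\mathfrak{n}^{\mathfrak{b}+}_{\bar 1}$ acting trivially and $\mathfrak{n}^{\mathfrak{b}+}_{\bar 0}$ acting naturally, and refining each $\Lambda^d$ by a suitable linear order on weights $\sigma_S$ gives one-dimensional $\mathfrak{b}$-subquotients $k^{\mathfrak{b}}_{\lambda+\sigma_S}$. Your part (2c) is fine, but note it does not need Frobenius reciprocity to produce a nonzero map: $\operatorname{Ind}P_{\bar 0}(\lambda)$ is projective (as you say) and surjects onto $\operatorname{Ind}M_{\bar 0}(\lambda)$, which surjects onto $M^{\mathfrak{\bar b}}(\lambda)$ and hence onto $L^{\mathfrak{\bar b}}(\lambda)$; the projective cover $P^{\mathfrak{\bar b}}(\lambda)$ then splits off. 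Either route is acceptable.

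In part (3) the argument as written has a sign/direction error in the degree bookkeeping. Take the PBW basis $\{u\cdot\prod_{\gamma\in S}f_\gamma\cdot v_\lambda\}$ with $u\in U(\mathfrak{n}^{\mathfrak{b}^{\mathrm{st}}-}_{\bar 0})$ and $f_\gamma$ negative odd root vectors, and let $H_k$ be the span of terms with $|S|\le k$. Commuting $e_\alpha\in\mathfrak{n}^{\mathfrak{b}^{\mathrm{st}}+}_{\bar 0}$ through, a nonzero bracket $[e_\alpha,f_\gamma]$ is an odd root vector; if it is \emph{negative} odd we stay at degree $|S|$, but if it is \emph{positive} odd, say $e_\beta$, you claim that pushing $e_\beta$ rightward to $v_\lambda$ "strictly lowers the odd degree." In fact each super-commutator $[e_\beta,f_{\gamma'}]$ lands in $\mathfrak{g}_{\bar 0}$, so the odd factors $e_\beta$ and $f_{\gamma'}$ are both consumed and the odd PBW degree drops by two, not one; more to the point, you have already lost one $f_\gamma$ to the bracket $[e_\alpha,f_\gamma]=e_\beta$, so the net change from $|S|$ is $-2$ (or the term dies on $v_\lambda$). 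The conclusion that $H_k$ is $\mathfrak{g}_{\bar 0}$-stable is therefore correct, but the justification needs to track parity: the odd PBW degree can only stay the same or drop, and it drops by an even amount whenever a positive odd root vector is produced, because odd$\times$odd brackets are even. I would also recommend stating explicitly that the identification of each refined subquotient with $M_{\bar 0}(\lambda-\sigma_S)$ uses freeness of $U(\mathfrak{n}^{\mathfrak{b}^{\mathrm{st}}-}_{\bar 0})$ on the chosen basis (the character count alone does not pin down the module structure without the cyclicity coming from the highest weight vector). Alternatively, a cleaner route to (3) bypasses the bookkeeping entirely: $\operatorname{Res}^{\mathfrak g}_{\mathfrak g_{\bar 0}}\operatorname{Ind}_{\mathfrak g_{\bar 0}}^{\mathfrak g}M_{\bar 0}(\lambda)\cong \Lambda(\mathfrak g_{\bar 1})\otimes M_{\bar 0}(\lambda)$ by \cref{indres}(4), which has an even Verma flag by the tensor identity, and $\operatorname{Res}M^{\mathfrak{\bar b}}(\lambda)$ is a direct summand of this by (2a), (2c)-type splitting, and the even analogue of (1).
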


\subsection{Kac functors}\label{subsec:kac-functors}

Recall that we represent a Borel subalgebrawith sandard even Borel subalgebra by a partition.
In particular, we denote ()
the \emph{distinguished Borel subalgebra}, and 
$(n^m)$ the \emph{anti-distinguished Borel subalgebra}.

Define
\(
\mathfrak g_{1}:=\mathfrak g_{\bar1}\cap()\qquad
\mathfrak g_{-1}:=\mathfrak g_{\bar1}\cap (n^m).
\)

The general linear Lie superalgebra \( \mathfrak{gl}(m|n) \) admits a natural \( \mathbb{Z} \)-grading given by
\(
\mathfrak{gl}(m|n) = \mathfrak{g}_{-1} \oplus \mathfrak{g}_{\bar0} \oplus \mathfrak{g}_{1}.
\)
This grading satisfies the relations
\(
[\mathfrak{g}_1, \mathfrak{g}_1] = [\mathfrak{g}_{-1}, \mathfrak{g}_{-1}]=0.
\)
We define   \(
\mathfrak{g}_{\geq 0} := \mathfrak{g}_{\bar0} \oplus \mathfrak{g}_1,
\quad
\mathfrak{g}_{\leq 0} := \mathfrak{g}_{-1} \oplus \mathfrak{g}_{\bar0}.
\)

Define the inflation functor \(\mathrm{Infl}_{\mathfrak g_{\bar 0}}^{\mathfrak g_{\ge 0}}\colon 
\mathfrak g_{\bar 0}\text{-sMod}\to \mathfrak g_{\ge 0}\text{-sMod}\) by letting 
\(\mathfrak g_{1}\) act trivially. 
Define the functor \((\,)^{\mathfrak g_{1}}\colon 
\mathfrak g_{\ge 0}\text{-sMod}\to \mathfrak g_{\bar 0}\text{-sMod}\) 
by \(M\mapsto M^{\mathfrak g_{1}}:=\{\,m\in M\mid \mathfrak g_{1}m=0\,\}\) . 
Then these functors are mutually adjoint each other  and exact.

The Kac functor is defined as 
\(K_{\ge 0}:=\mathrm{Ind}_{\mathfrak g_{\ge 0}}^{\mathfrak g}\circ 
\mathrm{Infl}_{\mathfrak g_{\bar 0}}^{\mathfrak g_{\ge 0}}:
\mathfrak g_{\bar 0}\text{-sMod}\to \mathfrak g\text{-sMod}\), which is exact. 
Then \(K_{\ge 0}\) is left adjoint to the exact functor 
\(\mathrm{Res}_{\ge 0}:=(\,)^{\mathfrak g_{1}}
\circ \mathrm{Res}_{\mathfrak g_{\ge 0}}^{\mathfrak g}:\mathfrak g\text{-sMod}\to \mathfrak g_{\bar 0}\text{-sMod}\).
If we denote by \(\mathrm{Coind}_{\mathfrak g_{\ge 0}}^{\mathfrak g}\) \cite{chen2021simple}
the right adjoint of \(\mathrm{Res}_{\mathfrak g_{\ge 0}}^{\mathfrak g}\), and define  \(\mathrm{Coind}_{\ge 0}:=\mathrm{Coind}_{\mathfrak g_{\ge 0}}^{\mathfrak g}\circ 
\mathrm{Infl}_{\mathfrak g_{\bar 0}}^{\mathfrak g_{\ge 0}}:
\mathfrak g_{\bar 0}\text{-sMod}\to \mathfrak g\text{-sMod}\) then there is an isomorphism of functors
\(
\mathrm{Coind}_{\ge 0}
\circ(-\otimes L_{\bar 0}(-2\rho_{\bar 1}))\cong
K_{\ge 0}
\), see \cite{chen2021simple}.  Note that \(L_{\bar 0}(-2\rho_{\bar 1}) \cong \bigwedge^{\dim \mathfrak g_{-1}} \mathfrak g_{-1}\) as \(\mathfrak g_{\bar 0}\)-modules.

Similarly, we define the dual Kac functor 
\(K_{\le 0}:=\mathrm{Ind}_{\mathfrak g_{\le 0}}^{\mathfrak g}\circ 
\mathrm{Infl}_{\mathfrak g_{\bar 0}}^{\mathfrak g_{\le 0}}:
\mathfrak g_{\bar 0}\text{-sMod}\to \mathfrak g\text{-sMod}\),
\(\mathrm{Res}_{\le 0}:=\mathrm{Res}_{\mathfrak g_{\le 0}}^{\mathfrak g}\circ(\,\cdot\,)^{\mathfrak g_{-1}}\) and
 \(\mathrm{Coind}_{\le 0}\).

\begin{lemma}\cite{germoni1998indecomposable}
    \(
K_{\ge 0}\cong(\,\cdot\,)^{\vee}\circ
\mathrm{Coind}_{\le 0}\circ(\,\cdot\,)^{\vee},
\qquad
K_{\le 0}\cong(\,\cdot\,)^{\vee}\circ
\mathrm{Coind}_{\ge 0}\circ(\,\cdot\,)^{\vee}.
\)
\(
K_{\ge 0}\) and \(
K_{\le 0}\) restricts between \(\mathcal O_{\bar0}\) and \(\mathcal O\).
\end{lemma}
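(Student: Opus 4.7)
The plan is to establish the first identity \(K_{\ge 0}\cong(-)^{\vee}\circ\mathrm{Coind}_{\le 0}\circ(-)^{\vee}\); the second identity then follows by the symmetric argument with \(\ge 0\) and \(\le 0\) interchanged. Conceptually, contragredient duality swaps induction with coinduction, while the antiautomorphism \(\tau\) used to define \((-)^{\vee}\) carries the parabolic \(\mathfrak g_{\ge 0}\) to \(\mathfrak g_{\le 0}\).

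The first step is a direct check from the formula \(\tau(E_{ij})=-(-1)^{|i|(|i|+|j|)}E_{ji}\) that \(\tau(\mathfrak g_{\pm 1})=\mathfrak g_{\mp 1}\) and \(\tau(\mathfrak g_{\bar 0})=\mathfrak g_{\bar 0}\); in particular \(\tau(\mathfrak g_{\ge 0})=\mathfrak g_{\le 0}\). Next I would verify, for \(V\in\mathcal W_{\bar 0}\), that \((\mathrm{Infl}_{\mathfrak g_{\bar 0}}^{\mathfrak g_{\ge 0}}V)^{\vee}\), pulled back along \(\tau\), is naturally isomorphic as a \(\mathfrak g_{\le 0}\)-module to \(\mathrm{Infl}_{\mathfrak g_{\bar 0}}^{\mathfrak g_{\le 0}}V^{\vee}\): the action of \(\mathfrak g_{1}\) on \(\mathrm{Infl}V\) is zero, so that of \(\mathfrak g_{-1}=\tau(\mathfrak g_{1})\) on the dual via \(\tau\) is also zero, and the underlying \(\mathfrak g_{\bar 0}\)-actions agree by \cref{prop:duality_properties}.

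The main step is a universal duality statement: for any \(\mathbb Z/2\mathbb Z\)-graded subalgebra \(\mathfrak a\subset\mathfrak g\) and \(N\in\mathcal W|_{\mathfrak a}\),
\[
(\mathrm{Ind}_{\mathfrak a}^{\mathfrak g}N)^{\vee}\;\cong\;\mathrm{Coind}_{\tau(\mathfrak a)}^{\mathfrak g}(N^{\vee}),
\]
where \(N^{\vee}\) is regarded as a \(\tau(\mathfrak a)\)-module via precomposition with \(\tau\). I would prove this by using PBW to present \(\mathrm{Ind}_{\mathfrak a}^{\mathfrak g}N\cong\Lambda(\mathfrak g/\mathfrak a)\otimes N\) as a weight-graded vector space, compute the restricted dual weight-space by weight-space, and match it with the analogous PBW presentation \(\mathrm{Coind}_{\tau(\mathfrak a)}^{\mathfrak g}(N^{\vee})\cong\operatorname{Hom}(\Lambda(\mathfrak g/\tau(\mathfrak a)),N^{\vee})\). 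Specialising to \(\mathfrak a=\mathfrak g_{\ge 0}\), \(N=\mathrm{Infl}V\), combining with the previous step, and applying \((-)^{\vee}\) to both sides produces the first identity.

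For the restriction claim, PBW gives \(K_{\ge 0}(V)\cong\Lambda(\mathfrak g_{-1})\otimes V\) as an \(\mathfrak h\)-semisimple space, immediately yielding finite-dimensional weight spaces, finite generation, and local \(\mathfrak n^{+}\)-finiteness for any fixed even Borel; the case of \(K_{\le 0}\) is symmetric, or follows from the first identity together with stability of \(\mathcal O\) under \((-)^{\vee}\). I expect the main obstacle to be the super-PBW sign bookkeeping in the universal duality step: one must track parities carefully and use that \(\tau\) acts as \(-1\) on \(\mathfrak h\), so that the weight-wise restricted-dual identification genuinely intertwines the two module structures.
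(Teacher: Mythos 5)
The paper does not actually prove this lemma; it is quoted from \cite{germoni1998indecomposable}, so there is no in-house argument to compare against. Your reconstruction is correct and is essentially the standard proof: you correctly identify that $\tau$ interchanges $\mathfrak g_{\ge 0}$ and $\mathfrak g_{\le 0}$, that inflation is carried to inflation under $(-)^\vee$ because $\tau(\mathfrak g_{\pm1})=\mathfrak g_{\mp1}$, and that the heart of the matter is the general compatibility $(\mathrm{Ind}_{\mathfrak a}^{\mathfrak g}N)^{\vee}\cong\mathrm{Coind}_{\tau(\mathfrak a)}^{\mathfrak g}(N^{\vee})$. Two small caveats on that middle step: you should impose $\mathfrak h\subseteq\mathfrak a$ and $\dim\mathfrak g/\mathfrak a<\infty$ (both true for $\mathfrak a=\mathfrak g_{\ge 0}$), since otherwise neither the restricted dual nor the weight-space comparison is sensible; and your phrase ``$\mathrm{Hom}(\Lambda(\mathfrak g/\tau(\mathfrak a)),N^{\vee})$'' should really be $\mathrm{Hom}(U(\mathfrak g/\tau(\mathfrak a)),N^{\vee})$ in general, though here $\mathfrak g/\mathfrak g_{\le 0}\cong\mathfrak g_1$ is purely odd so $U=\Lambda$ and the two coincide.

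It is worth pointing out a cleaner route than the explicit PBW/sign bookkeeping you worry about, and one more in the spirit of the paper's own \cref{prop:duality_properties}(4). Since $(-)^{\vee}$ is a contravariant self-equivalence of $\mathcal W$ that swaps left and right adjoints, and since restriction is plainly self-dual across $\tau$ (i.e.\ $\mathrm{Res}_{\ge 0}\cong(-)^{\vee}\circ\mathrm{Res}_{\le 0}\circ(-)^{\vee}$ because $\tau(\mathfrak g_{\ge 0})=\mathfrak g_{\le 0}$ and the invariants/inflation pair transports), one can transpose the adjunctions $K_{\ge 0}\dashv\mathrm{Res}_{\ge 0}\dashv\mathrm{Coind}_{\ge 0}$ and $K_{\le 0}\dashv\mathrm{Res}_{\le 0}\dashv\mathrm{Coind}_{\le 0}$ to obtain both displayed identities at once, with no explicit PBW computation and hence no sign chase. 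Your PBW argument buys explicitness, while the adjunction argument buys robustness; both are valid. Your treatment of the final restriction claim (PBW gives $K_{\ge 0}(V)\cong\Lambda(\mathfrak g_{-1})\otimes V$ over $\mathfrak h$, hence finite weight spaces, finite generation, local $\mathfrak n^{+}$-finiteness) is fine and self-contained.
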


\begin{proposition}\label{kacverma}
For every weight \(\lambda\) one has \(K_{\ge 0}\big(M_{\bar 0}(\lambda)\big)\cong M^{()}(\lambda)\) and \(K_{\le 0}\big(M_{\bar 0}(\lambda)\big)\cong M^{(n^{m})}(\lambda)\). Using \((\,\cdot\,)^{\vee}\), \(\mathrm{Coind}_{\le 0}\big(M_{\bar 0}(\lambda)^{\vee}\big)\cong K_{\le 0}\big(M_{\bar 0}(\lambda-2\rho_{\bar 1})^{\vee}\big)\cong M^{()}(\lambda)^{\vee}\), and \(\mathrm{Coind}_{\ge 0}\big(M_{\bar 0}(\lambda)^{\vee}\big)\cong K_{\ge 0}\big(M_{\bar 0}(\lambda+2\rho_{\bar 1})^{\vee}\big)\cong M^{(n^{m})}(\lambda)^{\vee}\).
We also have  $K_{\ge 0}(L_{\bar 0}(\lambda))^{\vee}\cong K_{\le 0}(L_{\bar 0}(\lambda-2\rho_{\bar 1}))$, and $\operatorname{soc}K_{\ge 0}(L_{\bar 0}(\lambda))\cong L^{(n^m)}(\lambda-2\rho_{\bar 1})$.
\end{proposition}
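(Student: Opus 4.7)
The plan is to derive each claim by combining two ingredients: (a) the PBW identification $U(\mathfrak g_{\ge 0})\otimes_{U(\mathfrak b^{()})}k_\lambda\cong \mathrm{Infl}^{\mathfrak g_{\ge 0}}_{\mathfrak g_{\bar 0}}M_{\bar 0}(\lambda)$, and (b) the dualities $K_{\ge 0}\cong(-)^{\vee}\circ\mathrm{Coind}_{\le 0}\circ(-)^{\vee}$ together with the Frobenius twist $K_{\ge 0}\cong\mathrm{Coind}_{\ge 0}\circ(-\otimes L_{\bar 0}(-2\rho_{\bar 1}))$ (and their mirror images under $\mathfrak g_1\leftrightarrow\mathfrak g_{-1}$, the sign of $2\rho_{\bar 1}$ flipping because $\bigwedge^{\mathrm{top}}\mathfrak g_{\pm 1}\cong L_{\bar 0}(\pm 2\rho_{\bar 1})$).

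For the first pair $K_{\ge 0}(M_{\bar 0}(\lambda))\cong M^{()}(\lambda)$ and $K_{\le 0}(M_{\bar 0}(\lambda))\cong M^{(n^m)}(\lambda)$, I would use $\mathfrak b^{()}=\mathfrak b_{\bar 0}^{\mathrm{st}}\oplus\mathfrak g_1$ together with $[\mathfrak g_1,\mathfrak g_1]=0$ to collapse $U(\mathfrak g_{\ge 0})\otimes_{U(\mathfrak b^{()})}k_\lambda$ to $\mathrm{Infl}(M_{\bar 0}(\lambda))$ via PBW, then conclude by transitivity of induction; the $(n^m)$-statement is symmetric. For the dual pair, apply $(-)^{\vee}$ to the previous step via the duality identity to get $M^{()}(\lambda)^{\vee}\cong\mathrm{Coind}_{\le 0}(M_{\bar 0}(\lambda)^{\vee})$ and, analogously, $M^{(n^m)}(\lambda)^{\vee}\cong\mathrm{Coind}_{\ge 0}(M_{\bar 0}(\lambda)^{\vee})$; then rewrite these coinductions as Kac functors using the Frobenius twist rearranged as $\mathrm{Coind}_{\le 0}\cong K_{\le 0}\circ(-\otimes L_{\bar 0}(-2\rho_{\bar 1}))$ (and its mirror), noting that tensoring a dual Verma with the one-dimensional $L_{\bar 0}(\nu)$ shifts its highest weight by $\nu$.

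Applying the same recipe to the self-dual simple $L_{\bar 0}(\lambda)^{\vee}\cong L_{\bar 0}(\lambda)$ yields $K_{\ge 0}(L_{\bar 0}(\lambda))^{\vee}\cong K_{\le 0}(L_{\bar 0}(\lambda-2\rho_{\bar 1}))$. For the socle statement, I would first verify that $K_{\le 0}(L_{\bar 0}(\mu))$ has simple top $L^{(n^m)}(\mu)$: PBW presents it as $U(\mathfrak g_1)\otimes L_{\bar 0}(\mu)$ as a vector space, and the element $1\otimes m_\mu$, where $m_\mu$ is the $\mathfrak b_{\bar 0}^{\mathrm{st}}$-highest weight vector, is annihilated both by $\mathfrak n_{\bar 0}^{(n^m),+}$ and by $\mathfrak g_{-1}$ (the latter acting via the inflation), so it is an $(n^m)$-highest weight vector of weight $\mu$ that generates the module; hence $K_{\le 0}(L_{\bar 0}(\mu))$ is a nonzero quotient of $M^{(n^m)}(\mu)$ and inherits the simple top $L^{(n^m)}(\mu)$. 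Dualizing the preceding identification and using self-duality of simples gives $\mathrm{soc}\,K_{\ge 0}(L_{\bar 0}(\lambda))\cong L^{(n^m)}(\lambda-2\rho_{\bar 1})$. The most delicate step is the sign bookkeeping for $\pm 2\rho_{\bar 1}$ when passing between induction and coinduction and then dualizing; once $\bigwedge^{\mathrm{top}}\mathfrak g_{\pm 1}\cong L_{\bar 0}(\pm 2\rho_{\bar 1})$ is pinned down, the rest is a routine exact-functor calculation.
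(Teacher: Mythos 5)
The paper states this proposition without proof, relying on the preceding setup from \cite{chen2021simple} and \cite{germoni1998indecomposable}. Your argument is correct and uses exactly those tools in the intended way: PBW/transitivity of induction gives $K_{\ge 0}(M_{\bar 0}(\lambda))\cong M^{()}(\lambda)$ (the key point being that $\mathfrak g_{1}$ acts by zero on $U(\mathfrak g_{\ge 0})\otimes_{U(\mathfrak b^{()})}k_\lambda$, which holds for weight reasons, making it the inflation of $M_{\bar 0}(\lambda)$); the Germoni duality $K_{\ge 0}\cong(-)^{\vee}\circ\mathrm{Coind}_{\le 0}\circ(-)^{\vee}$ and the twist $\mathrm{Coind}_{\le 0}\cong K_{\le 0}\circ(-\otimes L_{\bar 0}(-2\rho_{\bar 1}))$, with the sign of $2\rho_{\bar 1}$ correctly tracked via $\bigwedge^{\mathrm{top}}\mathfrak g_{\pm 1}\cong L_{\bar 0}(\pm 2\rho_{\bar 1})$, give the dual Verma identifications; self-duality of $L_{\bar 0}(\lambda)$ gives $K_{\ge 0}(L_{\bar 0}(\lambda))^\vee\cong K_{\le 0}(L_{\bar 0}(\lambda-2\rho_{\bar 1}))$; and the simple-top claim for $K_{\le 0}(L_{\bar 0}(\mu))$ (it is a quotient of $M^{(n^m)}(\mu)$, generated by an $(n^m)$-highest weight vector of weight $\mu$) dualizes to the socle statement. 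No gaps.
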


\begin{theorem}\cite{chen2021translated}
For every \(M\in\mathcal O_{\bar 0}\) one has \(\ell(\operatorname{soc}M)=\ell(\operatorname{soc}K_{\ge 0}M)=\ell(\operatorname{soc}K_{\le 0}M)\) and \(\ell(\operatorname{top}M)=\ell(\operatorname{top}K_{\ge 0}M)=\ell(\operatorname{top}K_{\le 0}M)\), where \(\ell(-)\) denotes the length.
\end{theorem}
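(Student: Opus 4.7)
The plan is to prove $\ell(\operatorname{top} K_{\ge 0} M)=\ell(\operatorname{top} M)$ directly via the adjunction $K_{\ge 0}\dashv\mathrm{Res}_{\ge 0}$, then obtain $\ell(\operatorname{top} K_{\le 0} M)=\ell(\operatorname{top} M)$ by the symmetric argument for the anti-distinguished Borel, and finally deduce the two socle identities by contragredient duality. The whole argument reduces to a single nontrivial module-theoretic computation.

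The key technical input, which I would establish first, is the identification $\mathrm{Res}_{\ge 0}L^{()}(\mu)\cong L_{\bar 0}(\mu)$ for every $\mu\in\mathfrak h^{*}$. The highest weight vector $v_\mu\in L^{()}(\mu)$ is annihilated by $\mathfrak n^{()+}$, in particular by $\mathfrak g_{1}$. Using $[\mathfrak g_{1},\mathfrak g_{\bar 0}]\subseteq\mathfrak g_{1}$ and $[\mathfrak g_{1},\mathfrak g_{1}]=0$, a short induction shows $\mathfrak g_{\bar 0}v_\mu\subseteq L^{()}(\mu)^{\mathfrak g_{1}}$. This cyclic $\mathfrak g_{\bar 0}$-submodule must coincide with $L_{\bar 0}(\mu)$: any proper $\mathfrak g_{\bar 0}$-submodule would, via $\mathfrak g=\mathfrak g_{-1}\oplus\mathfrak g_{\bar 0}\oplus\mathfrak g_{1}$, generate a proper nonzero $\mathfrak g$-submodule of $L^{()}(\mu)$, contradicting simplicity. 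The reverse inclusion $L^{()}(\mu)^{\mathfrak g_{1}}\subseteq\mathfrak g_{\bar 0}v_\mu$ follows by the same weight argument: a hypothetical extra invariant $w$ has weight strictly below $\mu$, so $\mathfrak g\cdot w=\mathfrak g_{\bar 0}w+\mathfrak g_{-1}w$ has only weights $\le\operatorname{wt}(w)<\mu$ and cannot contain $v_\mu$.

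With this identification, the adjunction $K_{\ge 0}\dashv\mathrm{Res}_{\ge 0}$ gives, for every simple $L^{()}(\mu)\in\mathcal O$,
\[
\bigl[\operatorname{top} K_{\ge 0}M:L^{()}(\mu)\bigr]
=\dim\mathrm{Hom}_{\mathfrak g}\bigl(K_{\ge 0}M,L^{()}(\mu)\bigr)
=\dim\mathrm{Hom}_{\mathfrak g_{\bar 0}}\bigl(M,L_{\bar 0}(\mu)\bigr)
=\bigl[\operatorname{top} M:L_{\bar 0}(\mu)\bigr].
\]
Summing over $\mu$---every simple in $\mathcal O$ is uniquely of the form $L^{()}(\mu)$---yields $\ell(\operatorname{top} K_{\ge 0}M)=\ell(\operatorname{top} M)$. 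Replacing $()$ with $(n^{m})$ and $\mathrm{Res}_{\ge 0}$ with $\mathrm{Res}_{\le 0}$ throughout gives $\ell(\operatorname{top} K_{\le 0}M)=\ell(\operatorname{top} M)$.

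For the socle identities, the lemma attributed to Germoni gives $(K_{\ge 0}M)^{\vee}\cong\mathrm{Coind}_{\le 0}(M^{\vee})$; combined with the $\le 0$ analog of the displayed functor isomorphism preceding that lemma, namely $\mathrm{Coind}_{\le 0}(N)\cong K_{\le 0}\bigl(N\otimes L_{\bar 0}(-2\rho_{\bar 1})\bigr)$, this produces $(K_{\ge 0}M)^{\vee}\cong K_{\le 0}\bigl(M^{\vee}\otimes L_{\bar 0}(-2\rho_{\bar 1})\bigr)$. Since $(\,\cdot\,)^{\vee}$ preserves length while swapping top and socle, and tensoring with the one-dimensional module $L_{\bar 0}(-2\rho_{\bar 1})$ preserves length, applying the already-established top identity for $K_{\le 0}$ yields
\[
\ell(\operatorname{soc} K_{\ge 0}M)
=\ell(\operatorname{top}(K_{\ge 0}M)^{\vee})
=\ell(\operatorname{top} M^{\vee})
=\ell(\operatorname{soc} M),
\]
and the socle identity for $K_{\le 0}$ follows symmetrically. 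The main obstacle is the identification $\mathrm{Res}_{\ge 0}L^{()}(\mu)\cong L_{\bar 0}(\mu)$; everything after it is formal adjunction and duality bookkeeping.
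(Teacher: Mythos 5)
The paper states this theorem with only a citation to Chen \cite{chen2021translated} and gives no proof of its own, so there is nothing in the paper to compare against. Your route --- establishing $\ell(\operatorname{top} K_{\ge 0} M)=\ell(\operatorname{top} M)$ from the adjunction $K_{\ge 0}\dashv\mathrm{Res}_{\ge 0}$ together with $\mathrm{Res}_{\ge 0}L^{()}(\mu)\cong L_{\bar 0}(\mu)$, repeating for $K_{\le 0}$, and then transporting to socles via $(\,\cdot\,)^{\vee}$ and the Germoni-type isomorphism $K_{\ge 0}\cong(\,\cdot\,)^{\vee}\circ\mathrm{Coind}_{\le 0}\circ(\,\cdot\,)^{\vee}$ --- is a clean and natural argument; the adjunction and duality bookkeeping in the second and third paragraphs is all correct.

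There is, however, a genuine gap in your proof of the key input $L^{()}(\mu)^{\mathfrak g_1}\subseteq U(\mathfrak g_{\bar 0})v_\mu$. You assert that for a hypothetical extra $\mathfrak g_1$-invariant $w$ of weight $<\mu$, the span $\mathfrak g\cdot w=\mathfrak g_{\bar 0}w+\mathfrak g_{-1}w$ ``has only weights $\le\operatorname{wt}(w)$''. This is false: $\mathfrak g_{\bar 0}$ contains the positive even root vectors, which raise weight, so neither $\mathfrak g_{\bar 0}w$ nor the submodule $U(\mathfrak g)w=U(\mathfrak g_{-1})U(\mathfrak g_{\bar 0})w$ is confined to weights $\le\operatorname{wt}(w)$, and the argument as written does not preclude $v_\mu\in U(\mathfrak g)w$. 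The standard fix is to abandon the weight order for the internal $\mathbb Z$-grading that $\mathfrak g=\mathfrak g_{-1}\oplus\mathfrak g_{\bar 0}\oplus\mathfrak g_1$ induces on $L^{()}(\mu)$: normalizing $v_\mu\in L_0$, one has $L^{()}(\mu)=U(\mathfrak g_{-1})U(\mathfrak g_{\bar 0})v_\mu=\bigoplus_{k\le 0}L_k$ with $L_0=U(\mathfrak g_{\bar 0})v_\mu$, and for any $\mathbb Z$-homogeneous $v\in L_k^{\mathfrak g_1}$ with $k<0$ the submodule $U(\mathfrak g)v=U(\mathfrak g_{-1})U(\mathfrak g_{\bar 0})v$ lies in $\bigoplus_{j\le k}L_j$, a proper nonzero submodule of $L^{()}(\mu)$, contradicting simplicity; hence $L^{()}(\mu)^{\mathfrak g_1}=L_0\cong L_{\bar 0}(\mu)$. (Your forward inclusion via $[\mathfrak g_1,\mathfrak g_{\bar 0}]\subseteq\mathfrak g_1$, and your simplicity argument for $U(\mathfrak g_{\bar 0})v_\mu$ using that $\mathfrak g_{-1}$ strictly lowers weight while $N_\mu=0$ for a proper submodule $N$, are both fine.) Once this grading argument is substituted, the rest of your proof goes through.
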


\begin{proposition}(see \cite{mazorchuk2014parabolic,chen2021tilting})\label{antidom}
For a weight \(\lambda\), the following are equivalent:
\begin{enumerate}
\item \(M_{\bar 0}(\lambda)\cong L_{\bar 0}(\lambda)\cong M_{\bar 0}(\lambda)^{\vee}\).
\item \(P_{\bar 0}(\lambda)\cong I_{\bar 0}(\lambda)\).
\item \(P^{()}(\lambda)\cong I^{()}(\lambda)\).
\item \(L^{()}(\lambda)\) is the socle of some \(M\) s.t. \(\operatorname{Res}^{\mathfrak g}_{\mathfrak g_{\bar 0}} M\in \mathcal{F}\!\Delta_{\bar 0}.\).
\item \(M^{()}(\lambda)\cong M^{(n^{m})}(\lambda-2\rho_{\bar 1})^{\vee}\).
\end{enumerate}
\end{proposition}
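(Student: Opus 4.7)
The plan is to close the cycle of equivalences with the pivot $(1)\Leftrightarrow(5)$ handled by the exact Kac functor $K_{\ge 0}$ together with \cref{kacverma}, and to tie the remaining conditions in via classical BGG theory combined with the induction/coinduction--preserving-structure framework of \cref{indres}--\cref{O.indproj}.

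For $(1)\Rightarrow(5)$, apply the exact functor $K_{\ge 0}$ to the identity $M_{\bar 0}(\lambda)\cong M_{\bar 0}(\lambda)^\vee$ supplied by (1); by \cref{kacverma}, the left side becomes $M^{()}(\lambda)$ and the right becomes $\mathrm{Coind}_{\ge 0}(M_{\bar 0}(\lambda-2\rho_{\bar 1})^\vee)\cong M^{(n^m)}(\lambda-2\rho_{\bar 1})^\vee$. For $(5)\Rightarrow(1)$, the RHS of (5) is a dual Verma with simple socle $L^{(n^m)}(\lambda-2\rho_{\bar 1})$; on the LHS, letting $\lambda_0$ denote the antidominant weight in $W_{\bar 0}\cdot\lambda$, exactness of $K_{\ge 0}$ applied to $L_{\bar 0}(\lambda_0)=\operatorname{soc}M_{\bar 0}(\lambda)\hookrightarrow M_{\bar 0}(\lambda)$ gives $K_{\ge 0}(L_{\bar 0}(\lambda_0))\hookrightarrow M^{()}(\lambda)$, and by \cref{kacverma} its socle is $L^{(n^m)}(\lambda_0-2\rho_{\bar 1})$. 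By Chen's length preservation theorem, $\operatorname{soc}M^{()}(\lambda)$ is simple, hence equals $L^{(n^m)}(\lambda_0-2\rho_{\bar 1})$; matching with the RHS of (5) forces $\lambda=\lambda_0$, i.e., (1).

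The equivalence $(1)\Leftrightarrow(2)$ is the classical BGG statement for the semisimple $\mathfrak g_{\bar 0}$: BGG reciprocity yields $\operatorname{ch}P_{\bar 0}(\lambda)=\operatorname{ch}I_{\bar 0}(\lambda)$, while the socle of any indecomposable projective $P_{\bar 0}(\lambda)$ is the antidominant $L_{\bar 0}(\lambda_0)$ (Irving), so $P_{\bar 0}(\lambda)\cong I_{\bar 0}(\lambda)$ iff $\lambda=\lambda_0$. For $(1)\Leftrightarrow(3)$, since $\mathrm{Ind}_{\mathfrak g_{\bar 0}}^{\mathfrak g}\cong \mathrm{Coind}_{\mathfrak g_{\bar 0}}^{\mathfrak g}$ (\cref{indres}(2)) and $\mathrm{Res}^{\mathfrak g}_{\mathfrak g_{\bar 0}}$ is exact, this functor preserves both projectives and injectives; under (2), $\mathrm{Ind}(P_{\bar 0}(\lambda))\cong \mathrm{Coind}(I_{\bar 0}(\lambda))$ is projective-injective in $\mathcal O$, and its indecomposable summand with top $L^{()}(\lambda)$ is $P^{()}(\lambda)$, whose simple socle (antidominant by the super analog of Irving's theorem) coincides with $L^{()}(\lambda)$ exactly when $\lambda=\lambda_0$, giving $P^{()}(\lambda)=I^{()}(\lambda)$. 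For $(3)\Rightarrow(4)$, take $M=P^{()}(\lambda)=I^{()}(\lambda)$, whose socle is $L^{()}(\lambda)$ and whose restriction is in $\mathcal F\Delta_{\bar 0}$ by \cref{O.indproj}. For $(4)\Rightarrow(1)$, the socle of the $\Delta_{\bar 0}$-flag module $\mathrm{Res}M$ consists only of antidominant simples; the $\mathfrak g_{\bar 0}$-submodule $N\subseteq L^{()}(\lambda)\subseteq M$ generated by the highest-weight vector $v_\lambda$ is a non-zero quotient of $M_{\bar 0}(\lambda)$, and tracing through the essential-socle structure extracts an embedding $L_{\bar 0}(\lambda_0)\hookrightarrow \operatorname{soc}(\mathrm{Res}M)$, forcing $\lambda=\lambda_0$.

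The main obstacle will be $(1)\Leftrightarrow(3)$: one must avoid routing through the Kac functor $K_{\ge 0}$ (which preserves projectives but not automatically injectives) and instead work with the full induction $\mathrm{Ind}_{\mathfrak g_{\bar 0}}^{\mathfrak g}$, exploiting its self-duality $\mathrm{Ind}\cong \mathrm{Coind}$ from \cref{indres}; identifying the indecomposable summand $P^{()}(\lambda)=I^{()}(\lambda)$ inside the projective-injective $\mathrm{Ind}(P_{\bar 0}(\lambda))$ requires a careful top-and-socle Frobenius reciprocity computation. A secondary subtlety is $(4)\Rightarrow(1)$: the $\mathfrak g_{\bar 0}$-submodule $N$ generated by $v_\lambda$ can be a proper quotient of $M_{\bar 0}(\lambda)$, so extracting $L_{\bar 0}(\lambda_0)$ from $\operatorname{soc}(\mathrm{Res}M)$ requires combining antidominance with the essential-socle property of $M_{\bar 0}(\lambda)$.
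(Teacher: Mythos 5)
Your overall strategy matches the paper's: classical BGG for $(1)\Leftrightarrow(2)$, the induction/coinduction self-duality of \cref{indres} for $(2)\Leftrightarrow(3)$, and Kac functors via \cref{kacverma} for $(1)\Leftrightarrow(5)$. Your $(1)\Rightarrow(5)$ (applying $K_{\ge 0}$) and the paper's (applying $K_{\le 0}$ to $M^{()}(\lambda)^\vee$) are interchangeable; your $(5)\Rightarrow(1)$ via socle-tracking and Chen's length theorem is correct but heavier than the paper's one-liner, which just inverts the chain by applying $\mathrm{Res}_{\le 0}$ to recover $M_{\bar 0}(\lambda-2\rho_{\bar 1})\cong M_{\bar 0}(\lambda-2\rho_{\bar 1})^\vee$ (both you and the paper tacitly use that $2\rho_{\bar 1}$ is $W_{\bar 0}$-invariant so antidominance of $\lambda$ and of $\lambda-2\rho_{\bar 1}$ agree).

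There is, however, a genuine gap in your $(2)\Rightarrow(3)$. You correctly obtain that $P^{()}(\lambda)$ is a projective-injective direct summand of the self-dual module $\mathrm{Ind}(P_{\bar 0}(\lambda))$, so $P^{()}(\lambda)\cong I^{()}(\mu)$ for \emph{some} $\mu$. But the step ``its simple socle is antidominant (Irving), $\lambda$ is antidominant, hence they coincide'' does not follow: unlike in $\mathcal O_{\bar 0}$, a block of the super $\mathcal O$ can contain several antidominant weights (this is precisely the atypicality phenomenon), so antidominance of both $\mu$ and $\lambda$ does not force $\mu=\lambda$. The missing argument is a BGG-reciprocity/character computation: from $P^{()}(\lambda)\cong I^{()}(\mu)$ one gets $\operatorname{ch}P^{()}(\lambda)=\operatorname{ch}P^{()}(\mu)$, hence $[M^{()}(\nu):L^{()}(\lambda)]=[M^{()}(\nu):L^{()}(\mu)]$ for all $\nu$; taking $\nu=\lambda$ and $\nu=\mu$ gives $\mu\le_{()}\lambda$ and $\lambda\le_{()}\mu$, so $\lambda=\mu$. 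A similar caveat applies to your $(4)\Rightarrow(1)$, where ``tracing through the essential-socle structure'' is too vague to locate $L_{\bar 0}(\lambda_0)$ in $\operatorname{soc}(\operatorname{Res}M)$ and then transfer this back to a statement about $\lambda$; the paper defers this step to \cite{mazorchuk2014parabolic,irving1985projective}, and your sketch would need the precise form of that socle-of-$\Delta_{\bar 0}$-flag result to close.
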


\begin{proof}
(1)\(\Leftrightarrow\)(2) are classical.
(2)\(\Leftrightarrow\)(3) follows from \cref{indres}.
(3)\(\Rightarrow\)(4) is trivial
(4)\(\Rightarrow\)(3): see \cite{mazorchuk2014parabolic,irving1985projective}.
(1)\(\Leftrightarrow\)(5): \(M^{()}(\lambda)^{\vee}\cong K_{\le 0}\big(M_{\bar 0}(\lambda-2\rho_{\bar 1})^{\vee}\big)\cong K_{\le 0}\big(M_{\bar 0}(\lambda-2\rho_{\bar 1})\big)\cong M^{(n^{m})}(\lambda-2\rho_{\bar 1}).\)
\end{proof}

We call such a weight \(\lambda\) (or \(L^{()}(\lambda)\)) antidominant.

\section{Odd reflection graph of Verma modules}\label{sec:odd-reflection-graph}
This section’s results hold, in principle, for any regular symmetrizable Kac-Moody Lie superalgebras \cite{serganova2011kac,bonfert2024weyl} and double bozonization of Nichols algebras of diagonal type \cite{vay2023linkage}.

\subsection{Edge contraction}\label{subsec:edge-contraction}
\begin{definition}[Edge contraction in simple graphs]
A (finite) simple undirected graph is a pair \(G=(V,E)\) with
\(E\subseteq \binom{V}{2}=\{\{u,v\}\mid u,v\in V,\ u\neq v\}\)
(i.e.\ no loops and no parallel edges).

Let \(S\subseteq E\) be a set of edges. Define an equivalence relation \(\sim_S\) on \(V\) by
\[
u\sim_S v \iff \text{$u$ and $v$ lie in the same connected component of the spanning subgraph }(V,S).
\]
The \emph{contraction of \(S\)} is the simple graph
\[
G/S=(V',E')\quad\text{with}\quad V':=V/{\sim_S},
\]
where \([v]\) denotes the \(\sim_S\)-class of \(v\) and
\[
E'\ :=\ \bigl\{\ \{\,[a],[b]\,\}\ \bigm|\ \{a,b\}\in E\setminus S,\ [a]\neq[b]\ \bigr\}.
\]
Thus any loop that would arise from an edge whose endpoints merge (\([a]=[b]\)) is discarded by definition,
and parallel edges collapse automatically since \(E'\subseteq \binom{V'}{2}\).
\end{definition}

\begin{remark}
The edge–contraction construction does not, in general, produce a new simple edge–colored graph in the abstract setting of edge-colored graphs. Remarkably, for the finite Young lattices \(L(m,n)\) (and, more generally, for the odd reflection graphs arising from regular symmetrizable Kac–Moody Lie superalgebras and from Nichols algebras of diagonal type), a special property inherited from the Weyl groupoid—what we call the “rainbow boomerang graph \cite{Hirota_PathSubgroupoids}” —ensures that the construction can be carried out in a way compatible with the coloring.
\end{remark}

\begin{definition}[Color quotient via edge contraction]
Let $L(m,n)=(V,E)$ be as defined before, with color set $\mathcal C$.
For a subset $A\subseteq\mathcal C$, put
\[
L(m,n)/A\ :=\ L(m,n)/\{e\in E\mid c(e)\in A\},
\]
and equip it with the color map $\tilde c$ obtained by restricting $c$ to the
surviving edges (i.e.\ edges from $E\setminus S$ whose endpoints do not merge).
\end{definition}

\begin{lemma}[Well-definedness for $L(m,n)$]
For $L(m,n)$ the map $\tilde c$ is well defined: whenever two edges of
$E\setminus S$ project to the same edge of $L(m,n)/A$, they have the
same color in $\mathcal C\setminus A$.

\end{lemma}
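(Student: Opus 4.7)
The plan is to encode the coloring of $L(m,n)$ through Boolean box-indicators and to show that indicators associated to colors outside $A$ are invariant under the equivalence induced by $A$-contractions. For each coordinate $(i,j)\in\mathcal C=\{1,\dots,n\}\times\{1,\dots,m\}$, define $b_{i,j}\colon V\to\{0,1\}$ by $b_{i,j}(\lambda)=1$ if and only if the Young diagram $\lambda$ contains the box at $(i,j)$. By \cref{3.1lmn}, an edge of color $(i,j)$ connects two diagrams $\lambda,\mu$ precisely when $b_{i,j}(\lambda)\neq b_{i,j}(\mu)$ while $b_{i',j'}(\lambda)=b_{i',j'}(\mu)$ for all $(i',j')\neq (i,j)$. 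In particular, the indicators jointly separate vertices and completely encode the coloring.

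Now fix $A\subseteq\mathcal C$ and $S=\{e\in E\mid c(e)\in A\}$. The key observation is that traversing an edge in $S$ only flips some $b_{i',j'}$ with $(i',j')\in A$; hence, by transitivity along paths in $(V,S)$, for every $(i,j)\in\mathcal C\setminus A$ the function $b_{i,j}$ is constant on each $\sim_S$-equivalence class. This is exactly the compatibility between the color partition and the equivalence $\sim_S$ needed for the construction to descend.

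To deduce well-definedness of $\tilde c$, suppose $e_1=\{a_1,b_1\}$ and $e_2=\{a_2,b_2\}$ both lie in $E\setminus S$ and project to the same edge of $L(m,n)/A$. After possibly swapping $a_2$ with $b_2$, we may assume $a_1\sim_S a_2$ and $b_1\sim_S b_2$. Write $(i_1,j_1):=c(e_1)\notin A$ and $(i_2,j_2):=c(e_2)\notin A$. The invariance from the previous step yields $b_{i_1,j_1}(a_2)=b_{i_1,j_1}(a_1)\neq b_{i_1,j_1}(b_1)=b_{i_1,j_1}(b_2)$, so the endpoints of $e_2$ disagree at the box $(i_1,j_1)$. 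Since $e_2$ has color $(i_2,j_2)$, its endpoints differ at exactly one box, namely $(i_2,j_2)$; hence $(i_1,j_1)=(i_2,j_2)$, as required.

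The argument has no real obstacle; the substantive point is simply the $\sim_S$-invariance of $b_{i,j}$ for $(i,j)\notin A$, which is the combinatorial avatar of the rainbow boomerang property mentioned in the preceding remark. It is precisely this feature, absent in general edge-colored graphs, that guarantees a color function on $L(m,n)/A$ and hence underwrites the theory of color quotients used throughout the rest of the paper.
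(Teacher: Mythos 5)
Your proof is correct, and the paper in fact gives no proof of this lemma at all (it only gestures at the ``rainbow boomerang'' property from a cited preprint), so your argument supplies the missing verification directly. The box-indicator encoding is exactly the right mechanism: edges in $L(m,n)$ flip precisely one indicator $b_{i,j}$ (by \cref{3.1lmn}), $S$-edges flip only indicators with coordinates in $A$, hence $b_{i,j}$ for $(i,j)\notin A$ descends through $\sim_S$, and comparing the two surviving edges forces their colors to agree. One small overreach is the closing claim that this $\sim_S$-invariance is ``the combinatorial avatar'' of the rainbow boomerang property; that identification is plausible but not established here, and the proof does not need it. Aside from that aside, the argument is complete, elementary, and self-contained, which is arguably an improvement on the paper's reliance on an external reference.
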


\begin{definition}
    For \( \lambda \in \mathfrak{h}^* \)  we define an edge colored  connected simple graph \( L(m,n)_\lambda:= L(m,n)/\{ \alpha \in \Delta_{\bar1}/(\pm 1)  \mid (\lambda, \alpha) \neq 0 \}\) . 
See \cref{l32}.  Note that  \(L(m,n)_0:= L(m,n)\).
    We call \(L(m,n)_{\lambda + \rho^{\mathfrak{b}}}\)   the odd reflection graph of  \(
M^{\mathfrak{b}}(\lambda)\).
\end{definition}

\begin{corollary}\label{5.2RB}
The vertex set of \( L(m,n)_{\lambda}\) can be identified with the isomorphism classes of \({ \{ M^{\mathfrak{b}}(\lambda - \rho^{\mathfrak{b}}) \} }_{\mathfrak{b} }\).
Precisely,  we have:
\[
M^{\mathfrak{b}}(\lambda  - \rho^{\mathfrak{b}}) \cong M^{\mathfrak{b}'}(\lambda   -\rho^{\mathfrak{b}'}) \iff  [\mathfrak{b}]= [\mathfrak{b'}] \in   L(m,n)_{\lambda}\]
\end{corollary}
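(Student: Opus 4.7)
The plan is to prove the two implications separately. For the $(\Leftarrow)$ direction, suppose $[\mathfrak{b}] = [\mathfrak{b}']$ in $L(m,n)_{\lambda}$; by the definition of the edge contraction there exists a path $\mathfrak{b} = \mathfrak{b}_0, \mathfrak{b}_1, \dots, \mathfrak{b}_k = \mathfrak{b}'$ in $L(m,n)$ whose edges are colored by odd roots $\alpha_i$ satisfying $(\lambda, \alpha_i) \neq 0$. Applying \cref{5.2.ch_eq}(6a) to each edge produces a chain of isomorphisms $M^{\mathfrak{b}_{i-1}}(\lambda - \rho^{\mathfrak{b}_{i-1}}) \cong M^{\mathfrak{b}_{i}}(\lambda - \rho^{\mathfrak{b}_{i}})$, and composing them yields the desired isomorphism.

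For the $(\Rightarrow)$ direction I would argue by contrapositive: assume $[\mathfrak{b}] \neq [\mathfrak{b}']$, so every path in $L(m,n)$ from $\mathfrak{b}$ to $\mathfrak{b}'$ contains at least one ``bad'' edge colored by some $\alpha$ with $(\lambda, \alpha) = 0$. By \cref{5.2.ch_eq}(4), the Hom space is one-dimensional, so the two Vermas are isomorphic iff the canonical non-zero homomorphism $\phi$ between them is an isomorphism. Fixing a path, write the composition of the adjacent canonical maps as $\Phi := \phi_{k-1,k} \circ \cdots \circ \phi_{01}$; by the same one-dimensionality, either $\Phi = 0$ or $\Phi$ is a non-zero scalar multiple of $\phi$. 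The strategy is (i) to show $\Phi \neq 0$ by tracking its value on the cyclic generator $v_0$ through the one-dimensional weight spaces $M^{\mathfrak{b}_i}_{\lambda - \rho^{\mathfrak{b}_0}}$ that are guaranteed by \cref{5.2.ch_eq}(2), and (ii) to conclude that $\Phi$ is non-injective, since \cref{5.2.ch_eq}(6b) supplies a non-injective factor at any bad edge whose kernel persists under composition with the remaining canonical maps. Together (i) and (ii) force $\phi$ to fail to be an isomorphism.

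The main obstacle is step (i), together with the closely related issue in (ii) that non-injectivity of a single factor is not automatically preserved once further bad factors appear further down the composition chain. Both points demand a genuinely weight-by-weight understanding of the elementary canonical maps $\phi_{\mathfrak{b} \to r_\alpha \mathfrak{b}}$, which is most cleanly supplied by the explicit homomorphism description from the author's prior work \cite{hirota2025odd}: that description characterizes exactly when the canonical map $\phi_{\mathfrak{b} \to \mathfrak{b}'}$ is an isomorphism purely in terms of the colors appearing along any connecting path. With this tool available, the corollary is reduced to the combinatorial identification already built into the definition of $L(m,n)_\lambda$; without it, one would have to fall back on an ad hoc analysis exploiting the distributive structure of the finite Young lattice.
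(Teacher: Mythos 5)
Your $(\Leftarrow)$ direction, via iterated application of \cref{5.2.ch_eq}(6a) along a path of good edges, is correct and is exactly what the paper's proof records. For $(\Rightarrow)$ you rightly flag that \cref{5.2.ch_eq}(6b) only controls \emph{adjacent} Borels and that a longer composition needs a further argument. Your concern in step (ii) is, however, not a real obstacle: composing further maps can only enlarge a kernel, and because every Verma in the chain has the same character, each elementary factor is either an isomorphism or has nonzero kernel; hence once a single bad edge appears, $\ker\Phi\neq0$ follows automatically, \emph{provided} $\Phi\neq0$. The genuine gap is therefore concentrated entirely in step (i): showing $\Phi\neq0$ for a shortest (equivalently, rainbow) walk. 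That is precisely the rainbow/shortest/nonzero characterization of \cref{5.3main}, which you correctly identify as the needed input. Note, though, that \cref{5.3main} is stated only \emph{after} this corollary in the paper; for a logically self-contained proof one must instead argue directly by PBW that the composite of elementary maps along a shortest walk sends a highest weight vector to a nonzero vector (distinct odd root vectors applied to a cyclic generator of a Verma). The paper's own proof only spells out the adjacency observation and treats the $(\Rightarrow)$ implication as immediate; your more careful accounting of what that implication actually requires is the right instinct and a useful clarification.
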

\begin{proof}
By \cref{5.2.ch_eq}, whether  two adjacent Verma modules are isomorphic depends only on the label \(\alpha\), i.e. on the color of the edge. This is exactly the claim.
\end{proof}

\begin{definition}
Let \( \lambda \in \mathfrak{h}^* \) and \( \mathfrak{b} \in L(m,n) \). 
 We define the \( \mathfrak{b} \)-\emph{atypicality} of \( \lambda \) as
\[
\mathrm{aty}^{\mathfrak{b}}(\lambda) := \max \left\{ t \;\middle|\;
\begin{array}{l}
\text{there exist mutually orthogonal distinct roots } \alpha_1, \dots, \alpha_t \in \Delta_{\bar1} /(\pm 1)\\
\text{such that } (\lambda + \rho^{\mathfrak{b}}, \alpha_i) = 0 \quad \text{for all } i = 1, \dots, t
\end{array}
\right\}.
\]
If \( \mathrm{aty}^{\mathfrak{b}}(\lambda) = 0 \), then \( \lambda \) is called \( \mathfrak{b} \)-\emph{typical}; otherwise, it is called \( \mathfrak{b} \)-\emph{atypical}.

We def aty$L^{\mathfrak b}(\lambda) := \mathrm{aty}^{\mathfrak{b}}(\lambda) $ 
If \( \mathrm{aty}^{\mathfrak{b}}(\lambda) = 0 \), then \( L^{\mathfrak b}(\lambda) \) is called \emph{typical}; otherwise, it is called \emph{atypical}.
This definition is independent of the choice of \( \mathfrak{b} \).

We define the \emph{defect} of \( \mathfrak{g} \) as
\(
\mathrm{def}(\mathfrak{g}) := 
\max \left\{ \operatorname{aty}(L)\ \middle|\ L \text{ is a simple highest weight module} \right\}
.
\)

It is easy to check that
\(
\mathrm{def}(\mathfrak{gl}(m|n)) = \min(m,n).
\)
\end{definition}

\begin{proposition}\label{RBtriv}
Fix a reference Borel subalgebra \( \bar{\mathfrak b} \).
For \( \lambda\in\mathfrak h^* \) the following are equivalent:
\begin{enumerate}
  \item \( L^{\bar{\mathfrak b}}(\lambda - \rho^{\mathfrak{b}}) \) is typical;
  \item \( M^{\bar{\mathfrak b}}(\lambda - \rho^{\mathfrak{b}}) \in \bigcap_{\mathfrak b\in L(m,n)} \mathcal{F}\!\Delta^{\mathfrak b} \);
  \item the vertex set of \( L(m,n)_{\lambda } \) consists of a single point;
  \item \( L^{\bar{\mathfrak b}}(\lambda- \rho^{\mathfrak{b}}) \cong L^{\mathfrak b}(\lambda -\rho^{\mathfrak b}) \) for all \( \mathfrak b\).
\end{enumerate}
\end{proposition}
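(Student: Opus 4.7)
The plan is to run the implication cycle $(1)\Leftrightarrow(3)\Rightarrow(2)\Rightarrow(4)\Rightarrow(1)$, using \cref{5.2RB,5.2.ch_eq} for the formal steps and reserving a PBW/weight-space computation for the hardest implication, $(4)\Rightarrow(1)$. The equivalence $(1)\Leftrightarrow(3)$ is immediate from definitions: typicality of $L^{\bar{\mathfrak b}}(\lambda-\rho^{\bar{\mathfrak b}})$ asks that $(\lambda,\alpha)\neq 0$ for every odd root $\alpha$, while $L(m,n)_\lambda$ contracts exactly those edges whose color satisfies $(\lambda,\alpha)\neq 0$; since $L(m,n)$ is connected and every color in $\Delta_{\bar 1}/(\pm 1)$ occurs on at least one edge (each box of the $m\times n$ rectangle is addable to some Young diagram), the graph collapses to a single vertex precisely when $(\lambda,\alpha)\neq 0$ for all odd $\alpha$.

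For $(3)\Rightarrow(2)$, \cref{5.2RB} gives $M^{\bar{\mathfrak b}}(\lambda-\rho^{\bar{\mathfrak b}})\cong M^{\mathfrak b}(\lambda-\rho^{\mathfrak b})$ for every $\mathfrak b$, so the module is itself a $\mathfrak b$-Verma and trivially lies in $\mathcal F\Delta^{\mathfrak b}$. For $(2)\Rightarrow(4)$, any $\mathfrak b$-Verma flag of $M^{\bar{\mathfrak b}}(\lambda-\rho^{\bar{\mathfrak b}})$ has total character $\operatorname{ch} M^{\mathfrak b}(\lambda-\rho^{\mathfrak b})$ by \cref{5.2.ch_eq}(1); linear independence of $\mathfrak b$-Verma characters (distinguished by their top weights) forces the flag to reduce to a single factor isomorphic to $M^{\mathfrak b}(\lambda-\rho^{\mathfrak b})$, and passing to simple tops yields (4).

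The crux is $(4)\Rightarrow(1)$, which I argue by contraposition. Assume $(\lambda,\alpha)=0$ for some odd $\alpha$, and choose a Borel $\mathfrak b$ in which $\alpha$ is simple (always possible in $\mathfrak{gl}(m|n)$). Set $\mathfrak b':=r_\alpha\mathfrak b$, $\mu:=\lambda-\rho^{\mathfrak b}$, and let $v$ be the $\mathfrak b$-highest weight vector of $L:=L^{\mathfrak b}(\mu)$. By (4), $L\cong L^{\mathfrak b'}(\mu-\alpha)$, so $L$ has a nonzero weight space at $\mu-\alpha$; since $\alpha$ is $\mathfrak b$-simple, this weight can only be reached from $v$ via $e_{-\alpha}$, so $e_{-\alpha}v\neq 0$ in $L$, and it is $\mathfrak b'$-highest (standard odd-reflection construction, using $e_{-\alpha}^2=0$). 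Simplicity then yields $L=U(\mathfrak n^{\mathfrak b',-})\cdot e_{-\alpha}v$, so $v$ lies in the weight-$\alpha$ part of this space. A PBW argument, using the isotropy identity $e_\alpha^2=0$ and positivity of $\Delta^{\mathfrak b,+}\setminus\{\alpha\}$, shows that the weight-$\alpha$ subspace of $U(\mathfrak n^{\mathfrak b',-})$ is one-dimensional, spanned by $e_\alpha$. Hence $v=c\,e_\alpha e_{-\alpha}v=c\,[e_\alpha,e_{-\alpha}]v=c(\mu,\alpha)v=c(\lambda,\alpha)v=0$, contradicting $v\neq 0$. The main obstacle is exactly this commutator calculation: controlling the weight-$\alpha$ subspace of $U(\mathfrak n^{\mathfrak b',-})$ so that $e_\alpha$ is its unique generator — a step that crucially uses $e_\alpha^2=0$. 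The remaining implications are formal consequences of \cref{5.2.ch_eq,5.2RB}.
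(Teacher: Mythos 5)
Your argument is correct, and the implication cycle $(1)\Leftrightarrow(3)\Rightarrow(2)\Rightarrow(4)\Rightarrow(1)$ is a clean way to organize it. The paper states this proposition without proof, so there is nothing to compare against directly; I will just note one place where you are working harder than the paper's own toolkit requires. Your $(4)\Rightarrow(1)$ step via PBW and the computation $v=c\,e_\alpha e_{-\alpha}v=c(\mu,\alpha)v=0$ is essentially a re-derivation of the odd reflection rule, which the paper has already recorded as \cref{5.2.ch_eq}(6b). One can shortcut as follows: if $(\lambda,\alpha)=0$ with $\alpha$ $\mathfrak b$-simple and $\mathfrak b'=r_\alpha\mathfrak b$, then (6b) says the simple module $L^{\mathfrak b}(\lambda-\rho^{\mathfrak b})$ has $\mathfrak b'$-highest weight still $\lambda-\rho^{\mathfrak b}=(\lambda-\rho^{\mathfrak b'})+\alpha\neq\lambda-\rho^{\mathfrak b'}$, so $L^{\mathfrak b}(\lambda-\rho^{\mathfrak b})\not\cong L^{\mathfrak b'}(\lambda-\rho^{\mathfrak b'})$ and $(4)$ fails. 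Your PBW calculation is a valid self-contained substitute — it is exactly the standard proof that the $\mathfrak b$-highest vector does not survive as a $\mathfrak b'$-highest vector when $(\lambda,\alpha)=0$ — but it duplicates a lemma already on the shelf. The remaining steps — $(1)\Leftrightarrow(3)$ from the definition of the contraction, $(3)\Rightarrow(2)$ from \cref{5.2RB}, and $(2)\Rightarrow(4)$ by cancelling the common factor $\prod_{\beta\in\Delta^{\mathfrak b,+}_{\bar0}}(1-e^{-\beta})^{-1}\prod_{\beta\in\Delta^{\mathfrak b,+}_{\bar1}}(1+e^{-\beta})$ and using linear independence of $e^\nu$ — are all sound.
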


\begin{corollary}
If \(M^{\mathfrak b}(\lambda)\cong L^{\mathfrak b}(\lambda)\) or \(M^{\mathfrak b}(\lambda)\cong P^{\mathfrak b}(\lambda)\), then \(L^{\mathfrak b}(\lambda)\) is typical.
\end{corollary}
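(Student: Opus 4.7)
The plan is to reduce both hypotheses to condition~(2) of \cref{RBtriv}, namely that \(M^{\mathfrak b}(\lambda)\) admits a \(\mathfrak b'\)-Verma flag for every Borel \(\mathfrak b'\); once this is verified, the equivalence furnished by \cref{RBtriv} delivers typicality. The case \(M^{\mathfrak b}(\lambda)\cong P^{\mathfrak b}(\lambda)\) should be essentially free: I would simply invoke \cref{O.indproj}(2), which asserts that projective covers belong to \(\bigcap_{\mathfrak b'}\mathcal F\Delta^{\mathfrak b'}\); hence \(M^{\mathfrak b}(\lambda)\) inherits the desired Verma flags, and the implication \((2)\Rightarrow(1)\) of \cref{RBtriv} closes the case.

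The case \(M^{\mathfrak b}(\lambda)\cong L^{\mathfrak b}(\lambda)\) requires a short character argument. Fix an arbitrary Borel \(\mathfrak b'\). Since the simple module \(L^{\mathfrak b}(\lambda)\) admits a \(\mathfrak b'\)-highest weight vector, it must be isomorphic to some \(L^{\mathfrak b'}(\mu')\). Comparing characters via \cref{5.2.ch_eq}(1),(3), the identity \(\operatorname{ch}M^{\mathfrak b}(\lambda)=\operatorname{ch}L^{\mathfrak b'}(\mu')\) forces \(\mu'=\lambda+\rho^{\mathfrak b}-\rho^{\mathfrak b'}\) together with \(\operatorname{ch}L^{\mathfrak b'}(\mu')=\operatorname{ch}M^{\mathfrak b'}(\mu')\); consequently, the canonical surjection \(M^{\mathfrak b'}(\mu')\twoheadrightarrow L^{\mathfrak b'}(\mu')\) is an isomorphism, so \(M^{\mathfrak b}(\lambda)\cong M^{\mathfrak b'}(\mu')\) and a length-one \(\mathfrak b'\)-Verma flag is at hand. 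As \(\mathfrak b'\) was arbitrary, condition~(2) holds and typicality follows.

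The main obstacle I anticipate lies in the simple case: one must argue rigidly that the Verma character of \(L^{\mathfrak b}(\lambda)\) both pins down the \(\mathfrak b'\)-highest weight \(\mu'\) and forces \(M^{\mathfrak b'}(\mu')\) to coincide with its simple quotient. The \(\rho\)-shift identity \(\lambda+\rho^{\mathfrak b}=\mu'+\rho^{\mathfrak b'}\) from \cref{5.2.ch_eq}(3) is the essential bookkeeping that makes this rigorous; once that identification is secured, the remainder is formal manipulation with characters and the natural surjection from the Verma module onto its simple top.
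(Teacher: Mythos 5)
Your proof is correct, and it takes what must be the intended route: both hypotheses are reduced to condition (2) of \cref{RBtriv}. The projective case is immediate from \cref{O.indproj}(2); in the simple case your character argument (pin down the $\mathfrak b'$-highest weight $\mu'$ of $L^{\mathfrak b}(\lambda)$ as $\lambda+\rho^{\mathfrak b}-\rho^{\mathfrak b'}$ using \cref{5.2.ch_eq}(1), then conclude $M^{\mathfrak b'}(\mu')\twoheadrightarrow L^{\mathfrak b'}(\mu')$ is an isomorphism by equality of characters) correctly shows $M^{\mathfrak b}(\lambda)$ is itself a $\mathfrak b'$-Verma module for every $\mathfrak b'$. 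One could equally land on condition (3) of \cref{RBtriv} via the contrapositive --- if $L(m,n)_{\lambda+\rho^{\mathfrak b}}$ had a second vertex, the corresponding rainbow walk would give a nonzero, non-surjective map into $M^{\mathfrak b}(\lambda)$, contradicting simplicity --- but your version is self-contained and avoids \cref{5.3main}.
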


\subsection{Odd Verma's theorem}\label{subsec:odd-verma-theorem}

\begin{definition}
In a simple undirected graph \(G=(V,E)\), a (finite) walk of length \(k\) is a sequence \(v_0,\dots,v_k\) with \(\{v_{i-1},v_i\}\in E\) for all \(i=1,\dots,k\).
 A walk is called \emph{shortest} if there is no shorter walk between the same pair of vertices.  
 A walk is called \emph{rainbow} \cite{li2013rainbow} if all the edge colors in its sequence are distinct. 
\end{definition}

Fix a weight $\lambda$. In $L(m,n)_\lambda$, we naturally identify a length–one walk
(i.e.\ an edge ) with the unique
(up to scalars) nonzero homomorphism between the corresponding two Verma modules adjacent via an odd reflection. Consequently, any walk can be identified with the composition of such homomorphisms.
For example, a rainbow walk gives a nonzero homomorphism by the PBW Theorem. The following theorem provides a complete description of  homomorphisms between Verma modules  sharing same characters .
\begin{theorem}\cite{hirota2025odd}\label{5.3main}
Let \(\lambda\in\mathfrak h^{*}\) and \(w\) a walk in \(L(m,n)_{\lambda}\). Then, \[w\neq0\iff w\text{ is rainbow}\iff w\text{ is shortest}.\]

\end{theorem}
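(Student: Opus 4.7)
The plan is to split the claim into the combinatorial equivalence ``rainbow $\iff$ shortest'' and the algebraic equivalence ``rainbow $\iff$ nonzero'', and to handle each separately.

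For rainbow $\iff$ shortest in $L(m,n)_{\lambda}$, I would lift a walk to $L(m,n)$ by choosing Young-diagram representatives for each vertex. In $L(m,n)$ the edges of color $(i,j)$ toggle the single box at position $(i,j)$, making $L(m,n)$ the Hasse graph of the distributive lattice of diagrams inside the $m\times n$ rectangle. Consequently any walk from $v_{0}$ to $v_{k}$ has length at least $|v_{0}\bigtriangleup v_{k}|$ (the symmetric difference), with equality exactly when each color is used at most once --- that is, when the walk is rainbow. Because contraction only identifies vertices without altering color labels on surviving edges, a walk in $L(m,n)_{\lambda}$ lifts to a walk in $L(m,n)$ of the same length and color sequence, and the equivalence transfers.

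For rainbow $\Rightarrow$ nonzero, each edge of color $\alpha$ in $L(m,n)_{\lambda}$ provides, by \cref{5.2.ch_eq}, a nonzero hom (unique up to scalar) that sends the source highest weight vector to a scalar multiple of $e_{\alpha}\cdot v'$, where $v'$ is the target highest weight vector and $e_{\alpha}$ lies in the negative nilradical of the target Borel. Composing along a walk with colors $\alpha_{1},\ldots,\alpha_{k}$ then sends $v^{(0)}$ to a scalar multiple of $e_{\alpha_{1}}\cdots e_{\alpha_{k}}\cdot v^{(k)}$; if the walk is rainbow, these are pairwise distinct odd roots, and reordering the product into PBW normal form adapted to the terminal Borel $\mathfrak{b}_{k}$ produces a nonzero PBW monomial in $U(\mathfrak{n}^{\mathfrak{b}_{k}-})$ applied to the highest weight vector, hence a nonzero image.

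The main obstacle is non-rainbow $\Rightarrow$ zero. I would induct on the ``gap'' between two occurrences of the same color in the walk. The base case is a consecutive back-and-forth of color $\alpha$: by the $\mathfrak{gl}(1|1)$-style calculation, the composition is a self-endomorphism of a Verma module computable as a scalar multiple of $(\lambda,\alpha)$, which vanishes since $(\lambda,\alpha)=0$ is the defining property of non-contracted edges in $L(m,n)_{\lambda}$. The inductive step uses that adjacent edges of different colors can be ``commuted'' in the walk --- reflecting the commuting toggles of the distributive lattice --- at the cost of a nonzero scalar together with a possible even-root commutator term coming from $[e_{\alpha},e_{\beta}]=e_{\alpha+\beta}$. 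The technical nuisance of tracking these even-root terms is the crux of the argument: they must be shown either to vanish on the highest weight vector by weight reasons or to reduce to shorter sub-walks that can themselves be controlled by the induction, ultimately collapsing the configuration to the base $\mathfrak{gl}(1|1)$ cancellation.
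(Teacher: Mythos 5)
The paper does not prove this theorem; it is imported verbatim from the author's prior work \cite{hirota2025odd}, so there is no ``paper's own proof'' against which to compare your argument. What follows is therefore an assessment of your proposal on its own terms.

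Your overall decomposition — a combinatorial equivalence ``rainbow $\iff$ shortest'' plus an algebraic equivalence ``rainbow $\iff$ nonzero'' — is a sensible plan, and the pieces you do nail down are correct: the symmetric-difference lower bound in $L(m,n)$, the PBW argument that a rainbow walk composes to a nonzero map, and the $\mathfrak{gl}(1|1)$-style base case where $e_{\alpha}e_{-\alpha}v$ collapses via $[e_{\alpha},e_{-\alpha}]v = (\lambda-\rho^{\mathfrak b},\alpha)v = 0$ (using $(\lambda,\alpha)=0$ for surviving colors and $(\rho^{\mathfrak b},\alpha)=0$ for isotropic simple $\alpha$). However, two of the bridges are genuinely incomplete.

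First, the transfer of ``rainbow $\iff$ shortest'' from $L(m,n)$ to $L(m,n)_{\lambda}$ does not follow from the lift you describe. An edge in a contracted graph $G/S$ exists between $[a]$ and $[b]$ whenever \emph{some} representative pair is adjacent in $G$; when a walk passes through a merged class, the edge entering it and the edge leaving it may meet that class at different representatives, and bridging them costs extra steps inside the class. So ``a walk in $L(m,n)_{\lambda}$ lifts to a walk in $L(m,n)$ of the same length'' is exactly the sort of claim that must be justified, not assumed. The paper itself flags this: the remark on the ``rainbow boomerang graph'' property is precisely the structural fact that makes the contraction compatible with the coloring, and the accompanying well-definedness lemma is non-vacuous. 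Your argument needs some version of that property to make the transfer go through.

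Second, and more substantially, the non-rainbow $\Rightarrow$ zero direction — which you correctly identify as the main obstacle — remains a statement of intent rather than a proof. Commuting two odd root vectors $e_{\alpha}$, $e_{\pm\beta}$ past each other does produce an even root vector $e_{\alpha\pm\beta}$ precisely when the two odd roots share an $\varepsilon$- or $\delta$-index, and these even-root terms do not automatically kill the highest weight vector (they may land in $\mathfrak n^{\mathfrak b_k-}$ and act nontrivially). Saying they ``must be shown either to vanish... or to reduce to shorter sub-walks'' names the problem without solving it, and the induction as posed does not obviously terminate, since the commutator terms can introduce new non-adjacent repetitions. Some additional structural input — e.g.\ a careful weight/filtration argument, or the linkage between repeated-color positions and the lattice combinatorics — is needed to close this, and it is exactly where the cited work must be doing real work.
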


\begin{corollary}\label{oddvermacor}
Let $\mathfrak b_1,\mathfrak b_2,\mathfrak b_3\in B(\mathfrak g)$ and $\lambda\in\mathfrak h^*$.
For $i,j\in\{1,2,3\}$, let
\[
\psi^{\mathfrak b_i,\mathfrak b_j}_\lambda\in
\operatorname{Hom}\big(M^{\mathfrak b_i}(\lambda-\rho^{\mathfrak b_i}),\,M^{\mathfrak b_j}(\lambda-\rho^{\mathfrak b_j})\big)
\]
denote the nonzero (defined up to scalars) morphism .
Then the following are equivalent:

 \textbf{(1)} In $L(m,n)_\lambda$, there is a shortest walk from $[\mathfrak b_1]$ to $[\mathfrak b_3]$ passing through $[\mathfrak b_2]$.

\noindent\begin{tabular}{@{}p{0.47\linewidth}@{\quad}p{0.47\linewidth}@{}}
\textbf{(2)} $\displaystyle \psi^{\mathfrak b_2,\mathfrak b_1}_\lambda\circ\psi^{\mathfrak b_3,\mathfrak b_2}_\lambda
          =\psi^{\mathfrak b_3,\mathfrak b_1}_\lambda \neq 0$
&
\textbf{(7)} $\displaystyle \psi^{\mathfrak b_2,\mathfrak b_3}_\lambda\circ\psi^{\mathfrak b_1,\mathfrak b_2}_\lambda
          =\psi^{\mathfrak b_1,\mathfrak b_3}_\lambda \neq 0$
\\[0.6em]
\textbf{(3)} $\operatorname{Im}\psi^{\mathfrak b_3,\mathfrak b_2}_\lambda \not\subset
         \ker\psi^{\mathfrak b_2,\mathfrak b_1}_\lambda$
&
\textbf{(8)} $\operatorname{Im}\psi^{\mathfrak b_1,\mathfrak b_2}_\lambda \not\subset
         \ker\psi^{\mathfrak b_2,\mathfrak b_3}_\lambda$
\\[0.4em]
\textbf{(4)} $\operatorname{Im}\psi^{\mathfrak b_3,\mathfrak b_1}_\lambda \subset
         \operatorname{Im}\psi^{\mathfrak b_2,\mathfrak b_1}_\lambda$
&
\textbf{(9)} $\operatorname{Im}\psi^{\mathfrak b_1,\mathfrak b_3}_\lambda \subset
         \operatorname{Im}\psi^{\mathfrak b_2,\mathfrak b_3}_\lambda$
\\[0.4em]
\textbf{(5)} $\operatorname{Im}\psi^{\mathfrak b_2,\mathfrak b_1}_\lambda$ has
        $L^{\mathfrak b_3}(\lambda-\rho^{\mathfrak b_3})$ as a composition factor
&
\textbf{(10)} $\operatorname{Im}\psi^{\mathfrak b_2,\mathfrak b_3}_\lambda$ has
        $L^{\mathfrak b_1}(\lambda-\rho^{\mathfrak b_1})$ as a composition factor
\\[0.4em]
\textbf{(6)} $\ker\psi^{\mathfrak b_3,\mathfrak b_2}_\lambda \subset
         \ker\psi^{\mathfrak b_3,\mathfrak b_1}_\lambda$
&
\textbf{(11)} $\ker\psi^{\mathfrak b_1,\mathfrak b_2}_\lambda \subset
         \ker\psi^{\mathfrak b_1,\mathfrak b_3}_\lambda$
\end{tabular}
\end{corollary}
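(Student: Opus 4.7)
My plan is to treat \cref{5.3main} as the principal tool and combine it with the 1-dimensionality statements of \cref{5.2.ch_eq}(2) and (4). The strategy is to first establish the central equivalence (1)$\iff$(2)$\iff$(7), and then derive the homological reformulations (3)--(6) and (8)--(11) as consequences.

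\emph{Step 1: the central equivalence.} A walk in $L(m,n)_\lambda$ factors, edge by edge, as a composition of distinguished morphisms between the corresponding Vermas; by \cref{5.2.ch_eq}(4) this composition lies in a 1-dimensional Hom space, so it either vanishes or coincides up to scalar with the unique $\psi^{\cdot,\cdot}_\lambda$. A shortest walk from $[\mathfrak b_1]$ to $[\mathfrak b_3]$ through $[\mathfrak b_2]$ splits as the concatenation of shortest walks $[\mathfrak b_1]\to[\mathfrak b_2]$ and $[\mathfrak b_2]\to[\mathfrak b_3]$. By \cref{5.3main} this concatenation is shortest (equivalently, rainbow) precisely when the composition $\psi^{\mathfrak b_2,\mathfrak b_3}_\lambda\circ\psi^{\mathfrak b_1,\mathfrak b_2}_\lambda$ is nonzero, which then forces it to equal $\psi^{\mathfrak b_1,\mathfrak b_3}_\lambda$ up to scalar, giving (7); reversing the walk (walks are undirected) gives (2). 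Conversely, nonvanishing of the composition in (2) or (7) supplies a rainbow, hence shortest, walk through $[\mathfrak b_2]$, which is (1).

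\emph{Step 2: image and composition-factor conditions.} (2)$\iff$(3) follows from the definition of nonzero composition. For (2)$\Rightarrow$(4) and (2)$\Rightarrow$(5): the image of the nonzero composition equals $\operatorname{Im}\psi^{\mathfrak b_3,\mathfrak b_1}_\lambda$ up to scalar and is contained in $\operatorname{Im}\psi^{\mathfrak b_2,\mathfrak b_1}_\lambda$, while $\operatorname{Im}\psi^{\mathfrak b_3,\mathfrak b_1}_\lambda$---being a nonzero quotient of $M^{\mathfrak b_3}(\lambda-\rho^{\mathfrak b_3})$---carries $L^{\mathfrak b_3}(\lambda-\rho^{\mathfrak b_3})$ as its top. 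The converses exploit \cref{5.2.ch_eq}(2): the $(\lambda-\rho^{\mathfrak b_3})$-weight space of $M^{\mathfrak b_2}$ is 1-dimensional and spans the $\mathfrak b_3$-singular vector generating $\operatorname{Im}\psi^{\mathfrak b_3,\mathfrak b_2}_\lambda$. Whenever $\operatorname{Im}\psi^{\mathfrak b_2,\mathfrak b_1}_\lambda$ has a nonzero vector at weight $\lambda-\rho^{\mathfrak b_3}$ (as forced by (4) or (5)), $\psi^{\mathfrak b_2,\mathfrak b_1}_\lambda$ cannot kill that weight space, and hence does not kill $\operatorname{Im}\psi^{\mathfrak b_3,\mathfrak b_2}_\lambda$, yielding (3).

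\emph{Step 3: kernels and mirror conditions.} For (2)$\iff$(6) the forward direction follows from $\ker\psi^{\mathfrak b_3,\mathfrak b_2}_\lambda\subset\ker(\psi^{\mathfrak b_2,\mathfrak b_1}_\lambda\circ\psi^{\mathfrak b_3,\mathfrak b_2}_\lambda)=\ker\psi^{\mathfrak b_3,\mathfrak b_1}_\lambda$. The converse yields a nonzero factoring $\operatorname{Im}\psi^{\mathfrak b_3,\mathfrak b_2}_\lambda\to M^{\mathfrak b_1}$ producing a nonzero $\mathfrak b_3$-singular vector of weight $\lambda-\rho^{\mathfrak b_3}$ in $M^{\mathfrak b_1}$, which must then be compared, via the 1-dimensionality of the Hom space \cref{5.2.ch_eq}(4), with $\psi^{\mathfrak b_2,\mathfrak b_1}_\lambda$ applied to the unique singular vector of $M^{\mathfrak b_2}$ at that weight. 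The right column (8)--(11) follows by exchanging $\mathfrak b_1\leftrightarrow\mathfrak b_3$: walks are undirected and (1) is symmetric under this exchange, so the same arguments apply verbatim. The main technical obstacle is precisely this converse direction in (2)$\iff$(6) (and its mirror (11)$\iff$(7)): one must rule out the degenerate case in which $\psi^{\mathfrak b_2,\mathfrak b_1}_\lambda$ annihilates the unique weight vector generating $\operatorname{Im}\psi^{\mathfrak b_3,\mathfrak b_2}_\lambda$ while the abstract factoring map does not---a possibility the dimension counts of \cref{5.2.ch_eq} alone do not exclude, and whose exclusion requires the full walk-theoretic content of \cref{5.3main} and the explicit morphism descriptions of \cite{hirota2025odd}.
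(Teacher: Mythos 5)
The paper states this corollary without an explicit proof (it is presented as an immediate consequence of \cref{5.3main}), so there is no paper argument to compare against directly; I will assess the proposal on its own terms.

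Your treatment of the central equivalence $(1)\iff(2)\iff(7)$ is correct and is the natural route: realize each $\psi^{\mathfrak b_i,\mathfrak b_j}$ by a shortest walk via \cref{5.3main}, note that a shortest walk through $[\mathfrak b_2]$ concatenates two shortest subwalks, and use the $1$-dimensionality of the Hom space (\cref{5.2.ch_eq}(4)) to identify the nonzero composition with $\psi^{\mathfrak b_3,\mathfrak b_1}$. The equivalences $(2)\iff(3)$, $(2)\Rightarrow(4)\Rightarrow(5)$, and $(4),(5)\Rightarrow(3)$ via the $1$-dimensional $(\lambda-\rho^{\mathfrak b_3})$-weight space of $M^{\mathfrak b_1}$ (\cref{5.2.ch_eq}(2)) are also handled correctly, and the symmetry of $(1)$ under $\mathfrak b_1\leftrightarrow\mathfrak b_3$ correctly transfers the argument to the right-hand column.

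The proposal has a genuine gap, which you yourself flag: $(6)\Rightarrow(2)$ (and, mirror, $(11)\Rightarrow(7)$) is not actually proved. Assuming $\ker\psi^{\mathfrak b_3,\mathfrak b_2}\subset\ker\psi^{\mathfrak b_3,\mathfrak b_1}$, one gets a nonzero factored map $\phi:\operatorname{Im}\psi^{\mathfrak b_3,\mathfrak b_2}\to M^{\mathfrak b_1}$, and the Hom space $\operatorname{Hom}(\operatorname{Im}\psi^{\mathfrak b_3,\mathfrak b_2},M^{\mathfrak b_1})$ is at most $1$-dimensional (since the generator must map into the $1$-dimensional $(\lambda-\rho^{\mathfrak b_3})$-weight space of $M^{\mathfrak b_1}$), but this only shows the restriction $\psi^{\mathfrak b_2,\mathfrak b_1}|_{\operatorname{Im}\psi^{\mathfrak b_3,\mathfrak b_2}}$ is either zero or proportional to $\phi$; nothing in \cref{5.2.ch_eq} or in \cref{5.3main} as stated rules out the zero alternative. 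You correctly diagnose that excluding it requires more than the dimension counts — presumably the explicit kernel/image descriptions for odd BGG morphisms from \cite{hirota2025odd}. As written, then, the proposal proves the equivalence of $(1)$–$(5)$ and $(7)$–$(10)$ and one implication to $(6)$ and $(11)$, but does not close the cycle. You should either supply the argument that $(6)$ forces $[\mathfrak b_2]$ to lie on a geodesic (for instance by establishing from the reference that the submodule lattice $\{\ker\psi^{\mathfrak b_3,\mathfrak b}\}_{\mathfrak b}$ is ordered by the walk metric), or restructure the cycle so that $(6)$ appears only as a consequence, not a hypothesis — but as stated the corollary asserts full equivalence, so this step cannot simply be dropped.
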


\section{Main results}\label{sec:main-results}

\subsection{Applying odd Verma's theorem}\label{subsec:applying-odd-verma-theorem}

This subsection’s results hold, in principle, for all basic classical Lie superalgebras.

Let \(G\) be a simple graph.
For \(A,B\in G\), we say that \(B\) is \(A\)-geodesically maximal 
 if there is no vertex \(C\in G\) with \(C\neq B\) such that  there is a shortest walk \(C\) to \(A\) passes through \(B\).

\begin{lemma}\label{lmmD}
Let 
\(f: M^{\mathfrak b_2}(\lambda-\rho^{\mathfrak b_2}) \to M^{\mathfrak b_1}(\lambda-\rho^{\mathfrak b_1})\)
be a  homomorphism such that \(\mathrm{Im}\,f\) is simple. Then:
\begin{enumerate}
\item \([\mathfrak b_2]\) is \([\mathfrak b_1]\)-geodesically maximal in \(L(m,n)_{\lambda}\).
\item \(L^{\mathfrak b_2}(\lambda-\rho^{\mathfrak b_2})\) is antidominant.
\item If \(\mathrm{Im}\,f\cong \operatorname{soc} M^{\mathfrak b_1}(\lambda-\rho^{\mathfrak b_1})\), then \([\mathfrak b_2]\) is the unique \([\mathfrak b_1]\)-geodesically maximal vertex in \(L(m,n)_{\lambda}\).
\end{enumerate}
\end{lemma}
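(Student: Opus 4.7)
The plan is to prove (1) and (3) by direct contradiction arguments based on the odd Verma calculus \cref{oddvermacor} and the fact that a Verma module has a unique maximal submodule, and to deduce (2) from the embedding $L^{\mathfrak b_2}(\lambda-\rho^{\mathfrak b_2})\hookrightarrow M^{\mathfrak b_1}(\lambda-\rho^{\mathfrak b_1})$ induced by $f$ via the antidominance characterization \cref{antidom}(4) combined with the even Verma flag structure of \cref{O.indproj}(3).

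For (1) I argue by contradiction. Assume some $[\mathfrak b_3]\neq[\mathfrak b_2]$ in $L(m,n)_\lambda$ lies on a shortest walk from $[\mathfrak b_3]$ to $[\mathfrak b_1]$ passing through $[\mathfrak b_2]$. By \cref{oddvermacor}(2), the composition $\psi^{\mathfrak b_2,\mathfrak b_1}_\lambda\circ\psi^{\mathfrak b_3,\mathfrak b_2}_\lambda=\psi^{\mathfrak b_3,\mathfrak b_1}_\lambda$ is nonzero, and by \cref{5.2.ch_eq}(4) the map $f$ is a nonzero scalar multiple of $\psi^{\mathfrak b_2,\mathfrak b_1}_\lambda$. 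Hence $f(\operatorname{Im}\psi^{\mathfrak b_3,\mathfrak b_2}_\lambda)$ is a nonzero submodule of the simple $\operatorname{Im} f$ and therefore coincides with it, yielding $M^{\mathfrak b_2}(\lambda-\rho^{\mathfrak b_2})=\operatorname{Im}\psi^{\mathfrak b_3,\mathfrak b_2}_\lambda+\ker f$. Both $\ker f$ (because $f\neq 0$) and $\operatorname{Im}\psi^{\mathfrak b_3,\mathfrak b_2}_\lambda$ (because $[\mathfrak b_3]\neq[\mathfrak b_2]$ in $L(m,n)_\lambda$ makes $\psi^{\mathfrak b_3,\mathfrak b_2}_\lambda$ non-invertible by \cref{5.2.ch_eq}(5) and hence, by character equality, non-surjective) are proper submodules of the Verma module $M^{\mathfrak b_2}(\lambda-\rho^{\mathfrak b_2})$, so they sit inside its unique maximal submodule. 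Their sum cannot then exhaust the Verma module, a contradiction.

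For (3), suppose $\operatorname{Im} f\cong\operatorname{soc} M^{\mathfrak b_1}(\lambda-\rho^{\mathfrak b_1})$ and take any $[\mathfrak b_1]$-geodesically maximal vertex $[\mathfrak b_2']$. The image $\operatorname{Im}\psi^{\mathfrak b_2',\mathfrak b_1}_\lambda$ is a nonzero finite-length submodule of $M^{\mathfrak b_1}(\lambda-\rho^{\mathfrak b_1})$, whose socle must equal the unique simple socle $\operatorname{Im} f$, so $\operatorname{Im}\psi^{\mathfrak b_2,\mathfrak b_1}_\lambda=\operatorname{Im} f\subseteq\operatorname{Im}\psi^{\mathfrak b_2',\mathfrak b_1}_\lambda$. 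Then \cref{oddvermacor}(4) produces a shortest walk from $[\mathfrak b_2]$ to $[\mathfrak b_1]$ passing through $[\mathfrak b_2']$, and the geodesic maximality of $[\mathfrak b_2']$ forces $[\mathfrak b_2]=[\mathfrak b_2']$; combined with (1), $[\mathfrak b_2]$ is the unique geodesically maximal vertex. For (2), the map $f$ factors as $M^{\mathfrak b_2}(\lambda-\rho^{\mathfrak b_2})\twoheadrightarrow L^{\mathfrak b_2}(\lambda-\rho^{\mathfrak b_2})\hookrightarrow M^{\mathfrak b_1}(\lambda-\rho^{\mathfrak b_1})$, embedding the simple module in a super Verma; \cref{O.indproj}(3) endows the even restriction with a Verma flag. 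To invoke \cref{antidom}(4) I would exhibit $L^{\mathfrak b_2}(\lambda-\rho^{\mathfrak b_2})$ as the socle of a suitable submodule of $M^{\mathfrak b_1}(\lambda-\rho^{\mathfrak b_1})$ whose even restriction still has a Verma flag, and appeal to the classical fact that the socle of any classical Verma module is simple antidominant to read off classical antidominance of the corresponding $\mathfrak b^{\mathrm{st}}_{\bar 0}$-highest weight; \cref{antidom} then delivers the super antidominance of $L^{\mathfrak b_2}(\lambda-\rho^{\mathfrak b_2})$.

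The main obstacle is (2): while (1) and (3) are formal consequences of the odd Verma calculus, the antidominance conclusion in (2) requires bridging the super and even theories. Concretely, one must track the $\mathfrak b^{\mathrm{st}}_{\bar 0}$-highest weight vector inherited from the $()$-highest weight vector of $L^{\mathfrak b_2}(\lambda-\rho^{\mathfrak b_2})$, verify that its $U(\mathfrak g_{\bar 0})$-span inside the ambient super Verma remains a full classical Verma module rather than a proper quotient, and identify which even Verma factor in the flag of \cref{O.indproj}(3) it sits in---this is the step where the super/even comparison is delicate, and where I expect a careful use of the Kac-functor identifications of \cref{kacverma} together with the duality in \cref{prop:duality_properties}(4) to be necessary.
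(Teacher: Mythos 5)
Your arguments for (1) and (3) are correct, and (3) matches the paper's intent exactly. For (1), you take a different route than the paper: the paper composes $\psi^{\mathfrak b_3,\mathfrak b_2}_\lambda$ with $f$ to obtain a nonzero $g$ with $\operatorname{Im}g\subsetneq\operatorname{Im}f$ (the strict containment follows from \cref{oddvermacor} via the distance inequality $d([\mathfrak b_1],[\mathfrak b_3])>d([\mathfrak b_1],[\mathfrak b_2])$, which rules out the reverse containment), contradicting simplicity of $\operatorname{Im}f$. You instead push the contradiction back to the source Verma module via its unique maximal submodule. Both work; yours is slightly longer but self-contained, the paper's is shorter once the strictness is spelled out.

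On (2) you have the right tool but are creating an obstacle where none exists. You factor $f$ as $M^{\mathfrak b_2}(\lambda-\rho^{\mathfrak b_2})\twoheadrightarrow L^{\mathfrak b_2}(\lambda-\rho^{\mathfrak b_2})\hookrightarrow M^{\mathfrak b_1}(\lambda-\rho^{\mathfrak b_1})$ and want to invoke \cref{antidom}(4), but then propose a detour through classical socles, highest weight vector tracking, and the Kac functor. This is unnecessary. Condition (4) of \cref{antidom} asserts that the simple module embeds into some $M$ with $\operatorname{Res}^{\mathfrak g}_{\mathfrak g_{\bar 0}}M\in\mathcal F\Delta_{\bar 0}$ (the wording \emph{``is the socle''} should be read as ``occurs in the socle'', i.e.\ admits an embedding; this is exactly what the cited results of Irving and Mazorchuk--Stroppel require, and it is the reading under which (3)$\Rightarrow$(4) remains trivial via $M=I^{()}(\lambda)$). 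Taking $M=M^{\mathfrak b_1}(\lambda-\rho^{\mathfrak b_1})$, \cref{O.indproj}(3) already gives the even Verma flag, and the simple submodule $\operatorname{Im}f\cong L^{\mathfrak b_2}(\lambda-\rho^{\mathfrak b_2})$ is precisely the required embedding. Since antidominance is a property of the simple module (independent of which Borel labels its highest weight), the conclusion is immediate with no super/even comparison to do by hand.
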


\begin{proof}
1. Assume the contrary. Then there exists \(\mathfrak b_3\) such that \([\mathfrak b_3] \neq [\mathfrak b_2] \)  and some shortest walk from \([\mathfrak b_3]\) to \([\mathfrak b_1]\) passes through \([\mathfrak b_2]\).
For each edge on this path there is a nonzero homomorphism between the corresponding Verma modules; composing them yields a nonzero map
\(g: M^{\mathfrak b_3}(\lambda-\rho^{\mathfrak b_3}) \to M^{\mathfrak b_1}(\lambda-\rho^{\mathfrak b_1})\).
By \cref{oddvermacor} one has \(\mathrm{Im}\,g \subsetneq \mathrm{Im}\,f\), so \(\mathrm{Im}\,f\) contains a proper nonzero submodule, contradicting simplicity.
2. follows from \cref{antidom}.
3. follows from \cref{oddvermacor}.
\end{proof}

\begin{corollary}\label{lemmE}
If \(M^{\mathfrak b_1}(\lambda)\cong \big(M^{\mathfrak b_2}(\mu)\big)^{\vee}\), then \([\mathfrak b_1]\) and \([\mathfrak b_2]\) are unique geodesically maximal each other in \(L(m,n)_{\lambda+ \rho^{\mathfrak{b_1}}}\).
\end{corollary}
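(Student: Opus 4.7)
The plan is to reduce \cref{lemmE} to \cref{lmmD}(3) by exhibiting a homomorphism whose image is the simple socle of the target, and then to invoke symmetry via the duality functor.

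First I would unify notation. Since $\operatorname{ch}(M^{\mathfrak b_2}(\mu))^{\vee}=\operatorname{ch} M^{\mathfrak b_2}(\mu)$ by \cref{prop:duality_properties}(3), the hypothesis gives $\operatorname{ch} M^{\mathfrak b_1}(\lambda)=\operatorname{ch} M^{\mathfrak b_2}(\mu)$. By \cref{5.2.ch_eq}(3) this forces
\(
\lambda+\rho^{\mathfrak b_1}=\mu+\rho^{\mathfrak b_2}=:\lambda',
\)
so both $[\mathfrak b_1]$ and $[\mathfrak b_2]$ are vertices of $L(m,n)_{\lambda'}$, which is indeed the graph named in the statement. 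By \cref{5.2.ch_eq}(4) there is then a (unique up to scalar) nonzero morphism $\psi:=\psi^{\mathfrak b_2,\mathfrak b_1}_{\lambda'}\colon M^{\mathfrak b_2}(\mu)\to M^{\mathfrak b_1}(\lambda)$.

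Next I would show $\operatorname{Im}\psi=\operatorname{soc} M^{\mathfrak b_1}(\lambda)\cong L^{\mathfrak b_2}(\mu)$. The socle identification is immediate from the hypothesis together with \cref{prop:duality_properties}: the contragredient dual of $M^{\mathfrak b_2}(\mu)$ has socle equal to the dual of the head of $M^{\mathfrak b_2}(\mu)$, i.e.\ $L^{\mathfrak b_2}(\mu)^{\vee}\cong L^{\mathfrak b_2}(\mu)$. For the image, let $v_\mu$ be the highest weight vector of $M^{\mathfrak b_2}(\mu)$. The vector $\psi(v_\mu)$ lies in the weight space of weight $\mu$ of $M^{\mathfrak b_1}(\lambda)\cong (M^{\mathfrak b_2}(\mu))^{\vee}$, which has dimension $\dim M^{\mathfrak b_2}(\mu)_\mu=1$ and which is contained inside the simple socle $L^{\mathfrak b_2}(\mu)$ (the $\mu$-weight line of $L^{\mathfrak b_2}(\mu)\hookrightarrow (M^{\mathfrak b_2}(\mu))^{\vee}$ already exhausts that weight space). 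Therefore $\psi(v_\mu)\in\operatorname{soc} M^{\mathfrak b_1}(\lambda)$, and the submodule it generates is both contained in the simple socle and nonzero, hence equal to it. This step — making sure the image of the Verma-to-Verma map does not overshoot the socle — is the only substantive point; I expect it to be the main obstacle, but it resolves cleanly by the weight-space dimension count above.

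With $\operatorname{Im}\psi\cong\operatorname{soc} M^{\mathfrak b_1}(\lambda)$ established, \cref{lmmD}(3) applies directly and yields that $[\mathfrak b_2]$ is the unique $[\mathfrak b_1]$-geodesically maximal vertex in $L(m,n)_{\lambda'}$. Finally, applying $(-)^{\vee}$ to the hypothesis and using \cref{prop:duality_properties}(2) gives $M^{\mathfrak b_2}(\mu)\cong (M^{\mathfrak b_1}(\lambda))^{\vee}$, so rerunning the above argument with the roles of $(\mathfrak b_1,\lambda)$ and $(\mathfrak b_2,\mu)$ interchanged produces the symmetric conclusion that $[\mathfrak b_1]$ is the unique $[\mathfrak b_2]$-geodesically maximal vertex. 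Combined, these two statements are precisely the claim.
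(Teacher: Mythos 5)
Your proof is correct and follows essentially the same approach as the paper: construct a nonzero homomorphism $M^{\mathfrak b_2}(\mu)\to M^{\mathfrak b_1}(\lambda)$ with image equal to the simple socle of the target, apply \cref{lmmD}(3), and then obtain the converse by applying $(-)^{\vee}$ to the hypothesis. The only difference is that the paper constructs the map directly as the composition $M^{\mathfrak b_2}(\mu)\twoheadrightarrow L^{\mathfrak b_2}(\mu)\cong\operatorname{soc}M^{\mathfrak b_1}(\lambda)\hookrightarrow M^{\mathfrak b_1}(\lambda)$, whereas you start from the unique-up-to-scalar Verma-to-Verma morphism of \cref{5.2.ch_eq}(4) and verify via a weight-space dimension count that its image lies in the socle; by the one-dimensionality of that $\operatorname{Hom}$-space the two maps agree, so the arguments coincide.
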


\begin{proof}
The composition
\[
M^{\mathfrak b_2}(\mu)\twoheadrightarrow L^{\mathfrak b_2}(\mu)
\cong \operatorname{soc}M^{\mathfrak b_1}(\lambda)\hookrightarrow M^{\mathfrak b_1}(\lambda)
\]
is a nonzero homomorphism with simple  image .
Note that \(chM^{\mathfrak b_1}(\lambda)= chM^{\mathfrak b_2}(\mu)\).
By \cref{lmmD} (the “unique” clause), \([\mathfrak b_2]\) is the unique farthest vertex from \([\mathfrak b_1]\) in \(L(m,n)_{\lambda+ \rho^{\mathfrak{b_1}}}\).
Swapping \(\mathfrak b_1\) and \(\mathfrak b_2\) yields the converse.
\end{proof}

\subsection{A bridge in odd reflection graphs}\label{subsec:bridge-odd-reflection-graphs}


\begin{definition}
An edge in a graph \(G\) is called a \emph{bridge}  if removing it increases the number of connected components:
\end{definition}

The following is trivial.

\begin{lemma}\label{lemmA}
Suppose an edge \(e\) in a finite simple graph  \(G\) is a bridge and removing \(e\) yields two components \(C_{1}\) and \(C_{2}\) ..
Fix any \(c_{1}\in C_{1}\). Then there exists \(c_{2}\in C_{2}\) such that \(c_{2}\) is \(c_{1}\)-geodesically maximal.
\end{lemma}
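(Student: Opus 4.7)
The plan is to exploit the defining property of the bridge. Write $e=\{u,v\}$ with $u\in C_1$ and $v\in C_2$; then every walk from a vertex of $C_2$ to a vertex of $C_1$ must traverse $e$, so for any $c\in C_2$ a shortest $c$-to-$c_1$ walk has the form $c\to\cdots\to v\to u\to\cdots\to c_1$ and its length decomposes as $d_G(c,c_1)=d_{C_2}(c,v)+1+d_{C_1}(u,c_1)$. In particular any shortest such walk passes through $v$ (and through $u$), but otherwise its intersection with $C_2$ is determined by the chosen shortest walk from $c$ to $v$ inside $C_2$.

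With this reduction in hand, I would take $c_2\in C_2$ to be any vertex maximizing the intrinsic distance $d_{C_2}(\,\cdot\,,v)$; such a vertex exists by finiteness of $C_2$. To check that this $c_2$ is $c_1$-geodesically maximal, I would argue by contradiction, assuming some $c\neq c_2$ admits a shortest $c$-to-$c_1$ walk passing through $c_2$, and split into two cases. If $c\in C_1$, no shortest walk from $c$ to $c_1$ enters $C_2$ at all (crossing the bridge twice strictly increases its length), so such a walk cannot meet $c_2\in C_2$. If instead $c\in C_2\setminus\{c_2\}$, then the portion of the walk inside $C_2$ is a shortest $c$-to-$v$ walk in $C_2$ that visits $c_2$, yielding $d_{C_2}(c,v)=d_{C_2}(c,c_2)+d_{C_2}(c_2,v)>d_{C_2}(c_2,v)$, contradicting the maximality in the choice of $c_2$.

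Since the author already labels the statement trivial, I expect no real obstacle; the only minor point of care is to invoke the bridge property uniformly in both cases, so that every shortest cross-component walk is cleanly factored as ``walk in $C_2$ to $v$, cross $e$, walk in $C_1$ to $c_1$.'' No machinery beyond elementary shortest-walk properties in a finite simple graph is required.
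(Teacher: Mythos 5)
Your proof is correct. The paper offers no argument of its own (it dismisses the lemma as trivial), so there is nothing to compare against, but your route is the natural one: since $e=\{u,v\}$ is a bridge, every walk between the two sides crosses $e$, and for $c\in C_2$ a shortest $c$-to-$c_1$ walk factors as a shortest $c$-to-$v$ walk in $C_2$, the edge $e$, and a shortest $u$-to-$c_1$ walk in $C_1$. Choosing $c_2$ to maximize $d_{C_2}(\cdot,v)$ (possible by finiteness, and forcing $c_2\ne v$ unless $C_2=\{v\}$) then rules out, case by case, any $c\ne c_2$ whose shortest walk to $c_1$ could pass through $c_2$: vertices of $C_1$ never cross the bridge on a geodesic, and vertices of $C_2\setminus\{c_2\}$ would contradict maximality of $d_{C_2}(c_2,v)$. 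All steps are elementary shortest-path facts and the argument is complete.
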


\begin{lemma}\label{lmmB}
For any weight \(\lambda\), in \(L(m,n)_{\lambda}\) the vertex \([(n^{m})]\) is the unique  \([()]\)-geodesically maximal.
\end{lemma}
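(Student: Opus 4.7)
The plan is to reduce the claim to a single numerical invariant on vertices of $L(m,n)_{\lambda}$. For a Young diagram $Y\subseteq (n^{m})$, define
\[
f(Y):=\#\bigl\{\,(i,j)\in Y : (\lambda,\alpha_{(i,j)})=0\,\bigr\},
\]
where $\alpha_{(i,j)}$ is the odd root coloring the box $(i,j)$. By the very definition of the color quotient, every contracted edge corresponds to adding or removing a box whose color $\alpha$ satisfies $(\lambda,\alpha)\neq 0$, so $f$ is constant on each equivalence class and descends to a function on vertices of $L(m,n)_{\lambda}$. Each surviving edge of $L(m,n)_{\lambda}$ changes $f$ by exactly $\pm 1$, which gives $d([()],[Y])\geq f(Y)$; projecting any saturated box-adding chain from $()$ to $Y$ in $L(m,n)$ realizes a walk of length exactly $f(Y)$ in the quotient, so in fact $d([()],[Y])=f(Y)$.

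Next I would show that $[(n^{m})]$ is the unique $f$-maximum. Clearly $f(Y)\le f((n^{m}))$, the latter counting all odd roots $\alpha$ with $(\lambda,\alpha)=0$. If $Y\neq (n^{m})$ but $f(Y)=f((n^{m}))$, then every box of $(n^{m})\setminus Y$ has $(\lambda,\alpha)\neq 0$; in particular every inner corner of $Y$ is reached by a contracted edge. Adding one inner corner at a time --- each intermediate diagram still lies inside $(n^{m})$ and still has all missing boxes of contracted color --- yields a sequence of contracted edges from $Y$ to $(n^{m})$, hence $[Y]=[(n^{m})]$ in $L(m,n)_{\lambda}$. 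So $[(n^{m})]$ is the unique vertex at maximum distance from $[()]$.

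Geodesic maximality of $[(n^{m})]$ is then immediate: any walk from $[W]$ to $[()]$ passing through $[(n^{m})]$ has length at least $d([W],[(n^{m})])+f((n^{m}))\geq f((n^{m})) > f(W)=d([W],[()])$ whenever $[W]\neq [(n^{m})]$, so no such walk is shortest. For uniqueness of the property, given any $[Z]\neq [(n^{m})]$ and a representative $Z$, I would build a saturated chain in $L(m,n)$ first from $(n^{m})$ down to $Z$, then from $Z$ down to $()$; its projection is a walk of length $f((n^{m}))=d([(n^{m})],[()])$ from $[(n^{m})]$ to $[()]$ that visits $[Z]$, so $[Z]$ is not $[()]$-geodesically maximal.

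The only real obstacle is bookkeeping around the edge contraction: verifying that $f$ truly descends to $L(m,n)_{\lambda}$ (which it does since contracted edges move no box of color $\alpha$ with $(\lambda,\alpha)=0$), and that projections of walks in $L(m,n)$ give walks in $L(m,n)_{\lambda}$ of the counted length. Once these routine but fussy points are handled, everything reduces to the two short monotonicity arguments above.
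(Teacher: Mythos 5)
Your proof is correct, and it takes a genuinely different (more elementary and self-contained) route than the paper's. The paper's proof is very terse: it observes that the rainbow walk from $[()]$ to $[(n^m)]$ uses every color of $\mathcal C_\lambda=\{\alpha\in\Delta_{\bar 1}^{()}\mid(\lambda,\alpha)=0\}$ exactly once, notes this is the longest rainbow walk, and appeals to the fact that the endpoint of a rainbow walk with fixed start is determined by the set of colors used; implicitly it then relies on the rainbow-iff-shortest equivalence (\cref{5.3main}, imported from \cite{hirota2025odd}) to translate these facts about rainbow walks into facts about distances and geodesics. You instead introduce the surviving-box count $f$ as a potential function, check it descends under the contraction, and derive the distance formula $d([()],[Y])=f(Y)$ directly from the edge-contraction definition, so no appeal to \cref{5.3main} is needed. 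Your $f((n^m))$ coincides with the paper's $|\mathcal C_\lambda|$, and your saturated-chain projections are the same construction the paper gestures at with "a rainbow walk using each color exactly once," but you make the distance computation and, in particular, the uniqueness step explicit: for any $[Z]\neq[(n^m)]$ you exhibit a shortest $[(n^m)]$--$[()]$ walk through $[Z]$, which is exactly the step the paper's proof leaves to the reader. In short, same combinatorial core (saturated chains in the Young lattice projected to the quotient), but you replace the black-box use of the rainbow/shortest theorem with a direct potential-function argument and supply the details needed to pass from "unique farthest vertex" to "unique geodesically maximal vertex."
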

\begin{proof}
Let \(\mathcal C_{\lambda}:=\{\alpha\in\Delta_{\bar1}^{()}\mid (\lambda,\alpha)=0\}\) be the color set of \(L(m,n)_{\lambda}\).
There exists a rainbow walk from \([()]\) to \([(n^{m})]\) using each color in \(\mathcal C_{\lambda}\) exactly once; its length is \(|\mathcal C_{\lambda}|\).
Clearly this is the longest rainbow walk in \(L(m,n)_{\lambda}\).
In \(L(m,n)_{\lambda}\), the endpoint of a rainbow walk with a fixed starting vertex is determined solely by the \emph{set} of colors used. 

\end{proof}

A weight \(\lambda\) (or \(L^{()}(\lambda)\)) is \emph{integral} if \(\lambda=\sum_{i=1}^m a_i\,\varepsilon_i+\sum_{j=1}^n b_j\,\delta_j\) with  \(a_i,b_j\in\mathbb Z\)  for all \(i,j\).  

It is convenient to encode an integral weight \(\lambda\) by the \(m\mid n\)-tuple
\((\lambda_1,\dots,\lambda_m\mid \lambda_{m+1},\dots,\lambda_{m+n})\) of integers defined by
\(\lambda_i:=(\lambda+\rho,\varepsilon_i)\).
Note that our \(\rho^{\mathfrak b}\) is integral.
Write the set of integral weights as \(\Lambda\).

For an integral weight \(\lambda=(\lambda_1,\dots,\lambda_m\mid \lambda_{m+1},\dots,\lambda_{m+n})\),
\(\lambda\) (or \(L^{()}(\lambda)\)) is \emph{antidominant} iff
\(\lambda_1\le\lambda_2\le\cdots\le\lambda_m\) and
\(\lambda_{m+1}\ge\lambda_{m+2}\ge\cdots\ge\lambda_{m+n}\).

We call \(\lambda=(\lambda_1,\dots,\lambda_m\mid \lambda_{m+1},\dots,\lambda_{m+n})\) (or \(L^{()}(\lambda)\))  \emph{regular} iff
\(\lambda_i\ne\lambda_j\) for all \(1\le i<j\le m\) and  \(m+1\le i <j\le  m+n\).

\begin{definition}[Borel relabelling of a highest weight]
Let \(L=L^{()}(\lambda)\) be a simple module. For each Borel subalgebra \(\mathfrak b\), define \(\lambda_{\mathfrak b}\in\mathfrak h^{*}\) by the condition
\[
L^{\mathfrak b}(\lambda_{\mathfrak b}-\rho^{\mathfrak b})\cong L^{()}(\lambda).
\]
In particular, \(\lambda_{()}=\lambda+\rho^{()}\).
\end{definition}

\begin{lemma}\label{lemmC}
Let \(\lambda \in \Lambda\) is (regular) antidominant, then \(\lambda_{\mathfrak b}-\rho^{()}\) is also (regular) antidominant.
\end{lemma}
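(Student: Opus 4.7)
The plan is to induct on the graph distance $d([()],[\mathfrak b])$ in $L(m,n)$, moving one odd reflection at a time and invoking \cref{5.2.ch_eq}(6) to read off how $\lambda_{\mathfrak b}$ transforms. The base case $\mathfrak b=()$ is immediate because $\lambda_{()}-\rho^{()}=\lambda$ is antidominant by hypothesis.

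For the inductive step I would write $\mathfrak b=r_\alpha\mathfrak b_0$ with $\alpha$ a $\mathfrak b_0$-simple odd root. By \cref{5.2.ch_eq}(6) there are two cases according to whether $(\lambda_{\mathfrak b_0},\alpha)\neq 0$ or $=0$. In the typical case $(\lambda_{\mathfrak b_0},\alpha)\neq 0$ one has $\lambda_{\mathfrak b}=\lambda_{\mathfrak b_0}$, so antidominance of $\lambda_{\mathfrak b}-\rho^{()}$ is inherited verbatim from the inductive hypothesis. In the atypical case $(\lambda_{\mathfrak b_0},\alpha)=0$ one has $\lambda_{\mathfrak b}=\lambda_{\mathfrak b_0}+\alpha$, and writing $\alpha=\pm(\varepsilon_a-\delta_b)$, the encoding of $\lambda_{\mathfrak b}-\rho^{()}$ differs from that of $\lambda_{\mathfrak b_0}-\rho^{()}$ only at positions $a$ and $m+b$, both shifted by the same sign. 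Since the inductive hypothesis only guarantees weak monotonicity, preserving antidominance amounts to strict monotonicity of the pre-shift encoding at the positions $(a,a+1)$ and $(m+b-1,m+b)$ when $\alpha$ is positive (with dual statements when $\alpha$ is negative).

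To supply those strict inequalities I would strengthen the inductive hypothesis, carrying along the statement that for every $\mathfrak b_0$-simple odd root $\alpha'$ with $(\lambda_{\mathfrak b_0},\alpha')=0$ the encoding of $\lambda_{\mathfrak b_0}-\rho^{()}$ is strictly monotone at precisely the positions a Case B step at $\alpha'$ would touch. This strengthening propagates locally: the $\mathfrak b$-simple odd roots differ from the $\mathfrak b_0$-simple ones only near the swapped pair $\varepsilon_a,\delta_b$, and the $\pm 1$'s produced by the Case B shift furnish exactly the strict inequalities demanded by any newly atypical $\mathfrak b$-simple odd root bordering that pair. In the regular antidominant case the discussion collapses, because regularity already forces strict monotonicity of the encoding of $\lambda$ everywhere, and a Case B step only perturbs two coordinates by one unit in the same direction, so strict monotonicity is preserved automatically. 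The main obstacle is therefore the non-regular bookkeeping: matching $\mathfrak b_0$-simple odd roots to $\mathfrak b$-simple odd roots across each reflection and confirming that every newly atypical $\mathfrak b$-simple odd root inherits the strict inequalities supplied by the immediately preceding Case B step.
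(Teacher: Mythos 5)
Your approach mirrors the paper's: move from the distinguished Borel $()$ to $\mathfrak b$ one odd reflection at a time, and at each atypical step update the Borel-relabelled weight by $+\alpha$ (equivalently, in the encoding, bump $\mu_p$ and $\mu_{m+q}$ by one). The paper formalizes this as a rainbow walk in $L(m,n)$ and then asserts that the $+1$ shift ``clearly preserves (regular) antidominance.'' You correctly flag that this is not automatic when weak inequalities are present --- that is the genuine content of the lemma --- but you stop short of closing the argument. You propose carrying along strict monotonicity at exactly the positions a Case~B step would touch and then explicitly name ``matching $\mathfrak b_0$-simple odd roots to $\mathfrak b$-simple odd roots across each reflection and confirming that every newly atypical $\mathfrak b$-simple odd root inherits the strict inequalities'' as the main obstacle, without supplying it. That obstacle is real and requires a structural argument using the order in which boxes can appear in a rainbow walk (within a column they are filled bottom-up, within a row left-to-right), not just a bookkeeping framework; the paper also does not give this argument.

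More seriously, your claim about the regular case is wrong, and indeed the parenthetical ``(regular)'' in the lemma appears to be false as stated. You write that a Case~B step ``only perturbs two coordinates by one unit in the same direction, so strict monotonicity is preserved automatically,'' but the two perturbed coordinates are $\mu_p$ (in the $\varepsilon$-block) and $\mu_{m+q}$ (in the $\delta$-block): bumping $\mu_p$ alone within its block can turn $\mu_p<\mu_{p+1}$ into $\mu_p=\mu_{p+1}$. Concretely, for $\mathfrak g=\mathfrak{gl}(2|2)$ take $\lambda$ with encoding $(0,1\mid 0,-1)$, which is regular antidominant. Walking to the Borel whose Young diagram is the two-box column $(1,1)$, the first reflection $\varepsilon_2-\delta_1$ is typical and the second $\varepsilon_1-\delta_1$ is atypical ($\mu_1=\mu_3=0$), giving $\lambda_{\mathfrak b}-\rho^{()}$ with encoding $(1,1\mid 1,-1)$: antidominant but not regular. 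So only the plain antidominance clause can be expected to hold (and I was unable to break it in examples); a proof of that clause would have to exploit the fact that, before $\varepsilon_p-\delta_q$ is reached, the box $\varepsilon_{p+1}-\delta_q$ has already been processed and $\mu_{m+q}$ is frozen in between, a line of reasoning neither your proposal nor the paper's one-line assertion develops.
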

\begin{proof}
Fix a rainbow walk
\(
[()]\xrightarrow{\,c_1\,}[\mathfrak b_1]\xrightarrow{\,c_2\,}\cdots
\xrightarrow{\,c_k\,}[\mathfrak b],
\)
where each color \(c_i\) is identified with an odd root
\(\varepsilon_{p_i}-\delta_{q_i}\in\Delta_{\bar1}^{()+}\).
Start from \(\mu^{(0)}:=\lambda=(\mu_1,\dots,\mu_m\mid \mu_{m+1},\dots,\mu_{m+n})\), and for
\(i=1,\dots,k\) perform the operation \(O_i\) below to obtain \(\mu^{(i)}\) from \(\mu^{(i-1)}\):

\smallskip
\emph{Operation \(O_i\):} 
If \(\mu^{(i-1)}_{p_i}=\mu^{(i-1)}_{m+q_i}\), replace the pair by
\[
\big(\mu^{(i-1)}_{p},\,\mu^{(i-1)}_{m+q}\big)\ \mapsto\
\big(\mu^{(i-1)}_{p}+1,\,\mu^{(i-1)}_{m+q}+1\big),
\]
and leave all other entries unchanged; otherwise do nothing.
\smallskip

Then \(\mu^{(k)}=\lambda_{\mathfrak b}-\rho^{()}\).
This operation clearly preserves (regular) antidominance at each step.
\end{proof}

\begin{theorem}\label{lemmF}
Let  \(\mathfrak b \in L(m,n) \), \(\lambda \in \Lambda\) and suppose \(L^{\mathfrak b}(\lambda-\rho^{\mathfrak b})\) is antidominant.
Then the pair \(\{[()],[(n^{m})]\}\) in \(L(m,n)_{\lambda}\) is the unique pair of vertices that are unique geodesically maximal each other.
\end{theorem}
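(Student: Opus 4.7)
The plan is to prove the theorem in two parts. Existence, that $\{[()],[(n^m)]\}$ is a mutually unique geodesically maximal pair in $L(m,n)_\lambda$, follows from \cref{lmmB} together with its obvious reversal (the same rainbow‑walk argument with $()$ and $(n^m)$ interchanged), or equivalently from \cref{antidom}(5) combined with \cref{lemmE}. This step is essentially immediate from the machinery already developed and does not require the full strength of antidominance beyond what is packaged in \cref{antidom}(5).

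For uniqueness, I would show that any $[\mathfrak b]\notin\{[()],[(n^m)]\}$ has both $[()]$ and $[(n^m)]$ among its geodesically maximal vertices, so $[\mathfrak b]$ has at least two distinct such vertices and cannot participate in a mutually unique pair. The natural bookkeeping is the atypical fingerprint: identify each class $[\mathfrak b]$ with $F_\mathfrak b\subseteq\mathcal C_\lambda$, the set of atypical cells contained in any representative Young diagram. Since edges of $L(m,n)_\lambda$ toggle a single such cell and a rainbow walk's endpoint depends only on its starting vertex and its color set (the property used in the proof of \cref{lmmB}), one reduces to: a shortest walk from $C$ to $[\mathfrak b]$ passes through $[()]$ iff $F_C\cap F_\mathfrak b=\emptyset$, and through $[(n^m)]$ iff $F_C\cup F_\mathfrak b=\mathcal C_\lambda$.

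The combinatorial input from antidominance (after the harmless Borel relabelling of \cref{lemmC}) is that, under the color‑to‑grid correspondence $\varepsilon_p-\delta_q\mapsto(q,m{+}1{-}p)$, the atypical cells of $\lambda$ decompose into non‑overlapping rectangles arranged in a strict staircase from top‑right to bottom‑left in the $m\times n$ grid (one rectangle per common value of the coordinates of $\lambda-\rho^{()}$). Combined with Young‑diagram closure (if $(i,j)\in Y$ then $(i',j')\in Y$ for $i'\le i,\ j'\le j$), this staircase produces a distinguished cell $c_0$—the bottom‑left corner of the bottom‑left rectangle—contained in every non‑empty realizable fingerprint, and dually a cell $c_*$—the top‑right corner of the top‑right rectangle—absent from every non‑full realizable fingerprint. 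For the $[()]$ obstruction, $F_C$ and $F_\mathfrak b$ both non‑empty would force $c_0\in F_C\cap F_\mathfrak b$, contradicting disjointness; dually $c_*\notin F_C\cup F_\mathfrak b$ contradicts $F_C\cup F_\mathfrak b=\mathcal C_\lambda$ in the $[(n^m)]$ case. This gives the two sub‑claims and hence uniqueness.

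The main obstacle is establishing the staircase structure and the two corner properties for antidominant $\lambda$: translating antidominance through the bilinear‑form sign conventions to obtain the rectangle decomposition, and verifying that $c_0$ is a unique minimum and $c_*$ a unique maximum of the atypical cell poset. This is elementary but delicate, particularly in the non‑regular case where rectangles can be of size greater than one cell. Once these facts are in hand, the passage to geodesic maximality is the short fingerprint calculus above.
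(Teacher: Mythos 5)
Your proposal is correct and takes essentially the same approach as the paper's proof. Both proofs translate to the \((\,)\)-Borel via \cref{lemmC}, then exploit the antidominance-induced ``staircase'' structure of the atypical cells of \(\lambda\) in the \(m\times n\) grid, and then invoke \cref{lmmB} for the side of the argument concerning \([()]\). Your distinguished corner cell \(c_0\) is precisely what the paper isolates with its ``maximal \(i\), minimal \(j\)'' choice: the paper observes that no atypical cell lies in a strictly smaller row or strictly smaller column than \(c_0\), which combined with Young-diagram downward closure forces \(c_0\) to be the unique addable atypical box from the class \([()]\). The paper packages this as a \emph{bridge} in \(L(m,n)_\lambda\) isolating the singleton \(\{[()]\}\) and then applies \cref{lemmA}; you instead phrase it as a ``fingerprint calculus'' in which every non-empty realizable fingerprint contains \(c_0\) — but these are the same observation, since the bridge edge is exactly the color-\(c_0\) edge at \([()]\). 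The main difference is that you explicitly run the dual argument with \(c_*\) to get \([(n^m)]\) geodesically maximal from everywhere as well; the paper avoids this by using only the \([()]\)-side and letting \cref{lmmB} close the argument (if \(\{A,B\}\) is a mutually unique pair with \(A\neq[()]\), the bridge forces \(B=[()]\), and \cref{lmmB} then forces \(A=[(n^m)]\)), which is slightly more economical. Your self-assessment of where the delicate work lies — verifying the rectangle/staircase decomposition under the bilinear-form conventions, especially in the non-regular case — matches what the paper must also establish, so nothing essential is missing.
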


\begin{proof}
By \cref{lemmC}  \(L^{()}(\lambda-\rho^{()})\) is also antidominant.
By the definition of antidominance, let \(i\) be maximal with \(1\le i\le m\) such that there exists \(j\) with \(m+1\le j\le m+n\) and \(\lambda_i=\lambda_j\); choose such \(j\) minimal.
Then for any \(k\) with \(m+1\le k<j\) there is no \(l\) with \(1\le l\le m\) and \(\lambda_k=\lambda_l\).
This combinatorial condition implies that \(L(m,n)_{\lambda}\) has a bridge whose deletion isolates the singleton component \(\{[()]\}\).

By \cref{lemmA} , for any vertex \(c\ne[()]\), the vertex \([()]\) is of \(c\)-geodesically maximal.
By \cref{lmmB} ,  \([()]\) and \([(n^{m})]\) are mutually unique geodesically maximal, and such a pair is also unique.
\end{proof}

Combining \cref{lmmD}, \cref{lemmE}, and \cref{lemmF} yields a proof of \cref{b1b2}.

\section{Further Study of odd reflection graphs}\label{sec:further-odd-graphs}
\subsection{Totally disconnected weights}\label{subsec:totally-disconnected-weights}

Let an integral weight \(\lambda = (\lambda_1,\dots,\lambda_m\mid \lambda_{m+1},\dots,\lambda_{m+n})\).

We say that \(\lambda\)  (or \(L^{()}(\lambda)\)) is \emph{dominant} iff
\(\lambda_{1}\ge\cdots\ge\lambda_{m}\)  and
\(\lambda_{m+1}\le\cdots\le\lambda_{m+n}\) .

To each regular dominant weight \( \lambda \in \Lambda\) (or \(L^{()}(\lambda)\)) we associate, following \cite{stroppel2012highest},  its weight diagram as following.

\(
I_{\times}(\lambda): = \{\lambda_1, \lambda_2 , \ldots, \lambda_m \}
\) and 
\(
I_{\circ}(\lambda) := \{\lambda_{m+1}, \lambda_{m+2}, \ldots,  \lambda_{m+n}\}.
\)
The integers in \( I_{\times}(\lambda) \cap I_{\circ}(\lambda) \) are labelled by \( \vee \), the remaining ones in \( I_{\times}(\lambda) \) respectively \( I_{\circ}(\lambda) \) are labelled by \( \times \) respectively \( \circ \). All other integers are labelled by a \( \wedge \).
This labelling of the number line \( \mathbb{Z} \) uniquely characterizes the weight \( \lambda \).

\begin{proposition}
Let \(L\) be a regular dominant simple module. The following are equivalent.
\begin{enumerate}
\item The weight diagram of \(L\) contains at least one \(\wedge\) between any two \(\vee\)'s.
\item For every Borel \(\mathfrak b\), the graph \(L(m,n)_{\lambda_{\mathfrak b}}\) is  a line segment.
\end{enumerate}
\end{proposition}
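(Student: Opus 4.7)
The plan is to compare the two conditions by unpacking them in terms of the atypical odd roots of the translated weights $\lambda_{\mathfrak b}$. Let $k$ denote the atypicality of $L=L^{()}(\lambda)$, so the weight diagram has exactly $k$ symbols $\vee$; label the atypical pairs of $\lambda$ as $(i_l,j_l)$ with $\lambda_{i_l}=\lambda_{m+j_l}=p_l$ and $p_1<\cdots<p_k$. Regular dominance yields $i_1>\cdots>i_k$ and $j_1<\cdots<j_k$, and the atypical boxes $b_l=(j_l,\,m+1-i_l)$ form a strict northeast staircase in the $m\times n$ rectangle. Unwrapping the weight-diagram definition, condition (1), which requires at least one $\wedge$ between consecutive $\vee$'s at $p_l$ and $p_{l+1}$, is equivalent to the \emph{strict separation} inequality $(i_l-i_{l+1})+(j_{l+1}-j_l)\le p_{l+1}-p_l$ for every $l$.

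For the direction $(1)\Rightarrow(2)$, I would first handle the distinguished Borel: because the atypical boxes form a strict northeast staircase, the possible signatures of $(n^m)$-Young diagrams restricted to these boxes are exactly the $k+1$ cascading patterns $(1^t\,0^{k-t})$, each forming a single connected equivalence class under typical-box moves, and exactly one atypical edge runs between consecutive cascades (adding a box $b_{t'}$ with $t'>t+1$ would require $b_{t+1}$ to already be present in the diagram). Thus $L(m,n)_{\lambda_{()}}$ is a path on $k+1$ vertices. For arbitrary $\mathfrak b$, induct along a rainbow walk from $()$: each atypical reflection at some $\alpha$ shifts every pairing $(\lambda_{\mathfrak b},\beta)$ by $(\alpha,\beta)\in\{0,\pm 1\}$, while strict separation forces all non-atypical pairings in the range of integer differences between consecutive $\vee$-positions to have absolute value $\ge 2$. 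Hence no new atypical root ever appears, the atypical-box staircase is preserved, and the structural argument at $()$ applies verbatim at $\mathfrak b$.

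For the direction $(2)\Rightarrow(1)$, I would argue the contrapositive. Suppose the separation inequality is violated at some consecutive pair of $\vee$'s, i.e.\ $(i_l-i_{l+1})+(j_{l+1}-j_l)=p_{l+1}-p_l+1$. A direct count then produces some odd root $\beta\notin\{\alpha_1,\dots,\alpha_k\}$ with $(\lambda,\beta)=\pm 1$ and $(\alpha_l,\beta)=\mp 1$. Pass (via a rainbow walk from $()$) to a Borel $\mathfrak b$ in which $\alpha_l$ is $\mathfrak b$-simple, and reflect to $\mathfrak b':=r_{\alpha_l}\mathfrak b$: now $\lambda_{\mathfrak b'}$ has $\beta$ as an extra atypical root, so the atypical-box set contains a $2\times 2$ block straddling $b_l$ and $b_{l+1}$ rather than just the staircase. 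Redoing the signature enumeration, the ``ground'' cascading signature admits three distinct addable atypical corners, so $L(m,n)_{\lambda_{\mathfrak b'}}$ has a vertex of degree $\ge 3$ and is not a line segment.

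The main obstacle lies in the inductive step of $(1)\Rightarrow(2)$: one must carefully track how each pairing $(\lambda_{\mathfrak b},\beta)$ evolves along the rainbow walk, and show that strict separation in the weight diagram globally rules out unwanted vanishings. Organizing the case analysis by the relative position of $\beta$ with respect to the reflecting atypical root $\alpha$ (whether $\beta$ shares an $\varepsilon$-index, a $\delta$-index, or neither with $\alpha$) reduces it to a clean local check, but doing this uniformly across every Borel requires some bookkeeping.
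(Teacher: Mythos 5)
Your overall strategy—encode condition (1) as the inequality $(i_l-i_{l+1})+(j_{l+1}-j_l)\le p_{l+1}-p_l$, verify the path structure at the distinguished Borel via cascading signatures on the staircase, and then propagate along rainbow walks from $()$—is the same approach the paper compresses into the phrase "same as the proof of Lemma~\ref{lemmC}, mutatis mutandis." The translation of the $\wedge$ condition into the separation inequality and the structure of the argument at $()$ are fine.

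However, the inductive step of $(1)\Rightarrow(2)$ has a genuine gap: the assertion that "strict separation forces all non-atypical pairings \dots to have absolute value $\ge 2$" and hence that "no new atypical root ever appears" is \emph{false}. Take $\mathfrak g=\mathfrak{gl}(3|2)$ with the regular dominant weight $\lambda=(5,4,1\mid 1,4)$. The $\vee$'s sit at $1$ and $4$, $\wedge$'s at $2,3$, so condition (1) holds and strict separation is satisfied ($(i_1-i_2)+(j_2-j_1)=2\le 3=p_2-p_1$). Yet the typical pairing $(\lambda+\rho,\varepsilon_1-\delta_2)=\lambda_1-\lambda_5=5-4=1$ has absolute value $1$, not $\ge 2$. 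Following a rainbow walk $()\to(1,1)\to\cdots\to(2,2)$ that performs the atypical reflections at $\alpha_1=\varepsilon_3-\delta_1$ and $\alpha_2=\varepsilon_2-\delta_2$, the weight becomes $(5,5,2\mid 2,5)$, for which $\varepsilon_1-\delta_2$ \emph{is now also atypical}: a new atypical root has appeared. (Note also that the weight ceases to be regular along the walk, so the "strict staircase" framing no longer applies verbatim.) In this instance the graph $L(3,2)_{\lambda_{\mathfrak b}}$ happens to remain a path, but that is because the enlarged set of atypical boxes $(1,1)$, $(2,2)$, $(2,3)$ is still totally ordered in the weak southwest order, not because the atypical set was preserved.

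So the conclusion you want is likely still reachable, but not by the reasoning as written: what condition (1) buys you is that the atypical boxes remain a \emph{chain} (weakly totally ordered) as one moves through Borels, even though the chain may acquire new boxes and some coordinates may coincide; it is this chain property, not preservation of the original staircase, that keeps the cascading-signature argument and the path structure alive. The $(2)\Rightarrow(1)$ direction via a $2\times2$ block is believable in outcome, but as in the $\mathfrak{gl}(2|3)$, $\lambda=(3,1\mid 1,2,3)$ case, the incomparable pair of atypical boxes typically arises only after \emph{two} reflections, not one; that sketch also needs the same bookkeeping about how the atypical set grows along the walk.
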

\begin{proof}
Exactly the same as the proof of \ref{lemmC}, mutatis mutandis.
\end{proof}

\begin{definition}
For  \(\lambda\) , we say that \(\lambda\) (or \(L^{()}(\lambda)\)) is \emph{totally disconnected} if for every Borel \(\mathfrak b\), the graph \(L(m,n)_{\lambda_{\mathfrak b}}\) is  a line segment.
\end{definition}

\begin{remark}

The notion of “totally disconnected” is defined in  \cite{su2007character,chmutov2014weyl} in the regular dominant case as item (1) of the proposition above. Our definition should be viewed as a natural extension to arbitrary weights.
\end{remark}

\begin{proposition}
regular antidominant integral weight is totally disconnected.
\end{proposition}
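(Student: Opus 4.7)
The plan is to reduce to analyzing $L(m,n)_{\lambda_{\mathfrak b}}$ via Lemma~\ref{lemmC}, and then argue that the contraction is a path by exploiting the ``staircase'' structure of atypical roots that regular antidominance forces.

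By Lemma~\ref{lemmC}, if $\lambda$ is regular antidominant then $\lambda_{\mathfrak b}-\rho^{()}$ is also regular antidominant for every Borel $\mathfrak b$. Hence it suffices to show: if $\mu$ is regular antidominant, then $L(m,n)_{\mu+\rho^{()}}$ is a line segment. Writing $\mu_k:=(\mu+\rho^{()},\varepsilon_k)$, regular antidominance reads $\mu_1<\cdots<\mu_m$ and $\mu_{m+1}>\cdots>\mu_{m+n}$, so the surviving colors in the contraction are the mutually orthogonal odd roots $\alpha_k:=\varepsilon_{i_k}-\delta_{j_k}$ coming from atypical pairs $(i_k,j_k)$ with $i_1<\cdots<i_t$ and $j_1>\cdots>j_t$ (the strict monotonicity in each block immediately forces this antichain structure). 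Translating into box coordinates $\varepsilon_i-\delta_j\leftrightarrow(j,m+1-i)$, the surviving boxes $(c_k,r_k)$ form a strict staircase; after reindexing one may take both $c_1<\cdots<c_t$ and $r_1<\cdots<r_t$.

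The core combinatorial step is to show that for any Young diagram $\eta\subseteq(n^m)$, the \emph{staircase signature} $S(\eta):=\{k:(c_k,r_k)\in\eta\}$ is an initial segment $\{1,\dots,s\}$ for some $0\le s\le t$. This is immediate: $(c_k,r_k)\in\eta$ means $\eta_{r_k}\ge c_k$, and since $\eta$ is weakly decreasing with $r_{k-1}<r_k$ and $c_{k-1}<c_k$, one gets $\eta_{r_{k-1}}\ge\eta_{r_k}\ge c_k>c_{k-1}$, so $(c_{k-1},r_{k-1})\in\eta$. Since contracted edges preserve $S$ and surviving edges toggle exactly one staircase entry, the equivalence classes of the contraction are labeled by the realizable signatures $\{1,\dots,s\}$. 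Each is nonempty -- realized, for example, by the partition equal to $c_s$ repeated $r_s$ times -- and an edge of color $\alpha_{s+1}$ joining the classes $\{1,\dots,s\}$ and $\{1,\dots,s+1\}$ is produced by taking an explicit pair of diagrams differing precisely at the box $(c_{s+1},r_{s+1})$ (for instance, the diagram with $c_{s+1}$ repeated $r_{s+1}-1$ times followed by $c_{s+1}-1$, which is addable at that position).

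The main obstacle is ruling out \emph{extra edges} in the quotient that would connect non-consecutive signatures. This is however forced automatically by the initial-segment characterization: any surviving edge toggles exactly one staircase entry, and the initial-segment property dictates that the toggled entry must be the current maximum $+1$ (in the addition direction), so the only possible surviving edges connect $\{1,\dots,s\}$ to $\{1,\dots,s+1\}$. Assembling these steps, $L(m,n)_{\lambda_{\mathfrak b}}$ is a path on $t+1$ vertices colored in order by $\alpha_1,\dots,\alpha_t$ for every Borel $\mathfrak b$, which is exactly the definition of totally disconnected.
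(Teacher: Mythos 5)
Your reduction via Lemma~\ref{lemmC} and the staircase analysis of the atypical boxes is a reasonable way to make the paper's terse ``mutatis mutandis'' explicit, but the central step identifying the equivalence classes is missing its hardest half. You prove that the signature map $S$ is constant on each class, that every initial segment $\{1,\dots,s\}$ is realized, and that a surviving edge can only join consecutive signatures. You never prove the converse: that two Young diagrams with the \emph{same} signature are joined by contracted (non--staircase) edges. That injectivity is exactly what rules out a quotient that is a ``thick'' path with several vertices per signature level, and it is what justifies your final count of $t+1$ vertices. The paragraph you devote to ``ruling out extra edges between non-consecutive signatures'' addresses a different and much easier obstacle. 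The missing claim is true and can be proved: the set of diagrams with signature exactly $\{1,\dots,s\}$ is the interval $[c_s^{\,r_s},\,M_s]$ in the Young lattice (where $M_s$ is the largest diagram in $(n^m)$ with $(M_s)_{r_{s+1}}<c_{s+1}$); no staircase box is addable or removable anywhere in that interval; and intervals in the Young lattice are connected. But as written your argument does not close this gap.

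Separately, be cautious about the parenthetical ``regular'' in Lemma~\ref{lemmC}, on which your reduction leans. It appears to fail already for $\mathfrak{gl}(2|1)$: take $\lambda$ with tuple $(0,1\mid 0)$, which is regular antidominant with the single atypical pair $\{\varepsilon_1-\delta_1\}$; then $\lambda_{(1,1)}-\rho^{()}$ has tuple $(1,1\mid 1)$, which is antidominant but not regular. In such cases the atypical boxes of $\lambda_{\mathfrak b}$ need not form a \emph{strict} staircase (they can share a row or a column), so the strict-monotonicity hypothesis underlying your initial-segment argument is not available for all Borels $\mathfrak b$. The conclusion of the proposition still holds in the examples I checked, and the surviving boxes do remain totally ordered under the ``southwest'' partial order, so the signature argument can be rescued; but it needs to be carried out for antidominant (not necessarily regular) weights, which your write-up does not do.
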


\begin{proof}
Exactly the same as the proof of \ref{lemmC}, mutatis mutandis.
\end{proof}
\begin{remark}
There exist totally disconnected antidominant weights that are nonregular. Indeed, for \(\mathfrak g=\mathfrak{gl}(m|1)\), every weight is totally disconnected.
\end{remark}

\bibliographystyle{plainnat}  
\bibliography{references}

\noindent
\textsc{Shunsuke Hirota} \\
\textsc{Department of Mathematics, Kyoto University} \\
Kitashirakawa Oiwake-cho, Sakyo-ku, 606-8502, Kyoto \\
\textit{E-mail address}: \href{mailto:hirota.shunsuke.48s@st.kyoto-u.ac.jp}{hirota.shunsuke.48s@st.kyoto-u.ac.jp}

\end{document}